\documentclass{amsart}

\usepackage{amsmath}
\usepackage{amssymb}
\usepackage{hyperref}
\usepackage{enumitem,xcolor}
\numberwithin{equation}{section} 
\usepackage{xfrac}

\newcommand{\rn}[1]{%
  \textup{\uppercase\expandafter{\romannumeral#1}}%
}

\newtheorem{theorem}{Theorem}[section]
\newtheorem{lemma}[theorem]{Lemma}
\newtheorem{corollary}[theorem]{Corollary}
\newtheorem{proposition}[theorem]{Proposition}

\theoremstyle{definition}
\newtheorem{definition}[theorem]{Definition}

\theoremstyle{remark}

\newcommand\R{\mathbb{R}}
\newcommand\Z{\mathbb{Z}}

\newcommand\C{\mathbb{C}}

\newcommand{\qtq}[1]{\quad\text{#1}\quad}

\renewcommand{\epsilon}{\varepsilon}

\newcommand{\schem}{\text{\O}}

\allowdisplaybreaks

\begin{document}

\title[Dispersive decay for the energy-critical NLS]{Dispersive decay for the energy-critical nonlinear Schr\"odinger equation}

\author[M.~Kowalski]{Matthew Kowalski}
\address{Department of Mathematics, University of California, Los Angeles, CA 90095, USA}
\email{mattkowalski@math.ucla.edu}

\begin{abstract}
    We prove pointwise-in-time dispersive decay for solutions to the energy-critical nonlinear Schr\"odinger equation in spatial dimensions $d = 3,4$ for both the initial-value and final-state problems.
\end{abstract}

\maketitle

\section{Introduction}\label{intro}
    This paper studies the asymptotic behavior of solutions to the {\em energy-critical nonlinear Schr\"odinger equation}:
\begin{equation}\label{NLS}\tag{NLS}
    \begin{cases} iu_t + \Delta u \pm |u|^\frac{4}{d-2}u = 0,\\
    u(0,x) = u_0(x) \in \dot{H}^1(\R^d),
    \end{cases}
\end{equation}
where $u(t,x)$ is a complex-valued function on spacetime $\R_t\times\R_x^d$. With this convention, $+$ represents the focusing equation and $-$ represents the defocusing equation. 

This equation is {\em energy-critical}: the scaling symmetry of \eqref{NLS},
\begin{equation*}
    u(t,x) \mapsto \lambda^{\frac{d-2}{2}} u(\lambda^2 t, \lambda x) \qtq{for} \lambda > 0,
\end{equation*}
preserves the (conserved) energy
\begin{equation*}
    E(u) = \int \tfrac{1}{2} |\nabla u(t,x)|^2 \mp \tfrac{d-2}{2d} |u(t,x)|^\frac{2d}{d-2} dx.
\end{equation*}
We discuss \eqref{NLS} for initial data in the Sobolev space $\dot{H}^1$ and the Besov space $\dot{B}^1_{2,1}$, both of which are scaling-critical.

In the seminal work \cite{defocusing-d3}, Colliander, Keel, Staffilani, Takaoka, and Tao proved global well-posedness of \eqref{NLS} in the defocusing case in spatial dimension $d= 3$. Their argument was generalized in \cite{defocusing-d4,defocusing-dgeq5} to show global well-posedness for spatial dimensions $d \geq 4$, and simplified proofs were presented in the subsequent works \cite{defocusing-simplified-d3,defocusing-simplified-d4}.


For the focusing case, global well-posedness is conjectured to hold for initial data $u_0 \in \dot{H}^1$ which satisfies $\|u_0\|_{\dot{H}^1} < \|W\|_{\dot{H}^1}$ and $E(u_0) < E(W)$, where $W$ is a stationary solution to \eqref{NLS} given by
\begin{equation*}
    W(x) = \big(1 + \tfrac{1}{d(d+2)}|x|^2\big)^{\frac{2-d}{2}}.
\end{equation*}
This conjecture was resolved for radial initial data in spatial dimension $d = 3$, see \cite{focusing-d345}, and for general initial data in spatial dimensions $d \geq 4$, see \cite{focusing-d4,focusing-dgeq5}.


In the following theorem \cite{defocusing-d3,focusing-d4,focusing-d345,defocusing-d4, defocusing-dgeq5}, we summarize the well-posedness results that are needed:
\begin{theorem}[Well-posedness]\label{well-posedness}
    Fix $d \geq 3$ and let $u_0 \in \dot{H}^1(\R^d)$. In the focusing case, assume that $u_0$ satisfies $\|u_0\|_{\dot{H}^1} < \|W\|_{\dot{H}^1}$ and $E(u_0) < E(W)$.
    In the $d = 3$ focusing case, further assume that $u_0$ is radial. Then there exists a unique global solution $u \in C_t\dot{H}_x^1$ to \eqref{NLS} with initial data $u_0$ which satisfies
    \begin{equation}\label{intro/bounds}
        \int_\R \int_{\R^d} |u(t,x)|^{\frac{2(d+2)}{d-2}} dx dt \leq C\big(\|u_0\|_{\dot{H}^1}\big).
    \end{equation}
    Moreover, there exist scattering states $u_{\pm} \in \dot{H}^1$ such that
    \begin{equation}\label{intro/scattering}
        \lim_{t \to \pm \infty}\|u(t) - e^{it\Delta} u_{\pm}\|_{\dot{H}_x^1} = 0.
    \end{equation}

\end{theorem}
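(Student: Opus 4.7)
The plan is to split the statement into two parts and assemble both by invoking the existing literature. Part (i) is global existence, uniqueness, and the a priori spacetime bound \eqref{intro/bounds}; part (ii) is the scattering statement \eqref{intro/scattering}, which I will deduce from (i) by a standard Strichartz argument. Since the theorem is explicitly a compilation of known results, no original argument is expected for part (i); the work is in lining up the citations case by case.

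For (i) in the defocusing case, I will invoke \cite{defocusing-d3} when $d = 3$ and \cite{defocusing-d4, defocusing-dgeq5} when $d \geq 4$; each of these constructs a unique global $\dot{H}^1$ solution and establishes the a priori bound with constant depending only on $\|u_0\|_{\dot H^1}$ and $d$. For (i) in the focusing case, I will invoke \cite{focusing-d345} in the $d = 3$ radial setting and \cite{focusing-d4, focusing-dgeq5} for $d \geq 4$ without symmetry assumption; here the hypotheses $\|u_0\|_{\dot H^1} < \|W\|_{\dot H^1}$ and $E(u_0) < E(W)$ enter exactly as in those works via the variational characterization of the ground state $W$.

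For (ii), I will use \eqref{intro/bounds} to partition $\R$ into finitely many intervals $I_1, \dots, I_N$ on each of which $\|u\|_{L^{2(d+2)/(d-2)}_{t,x}(I_j \times \R^d)} < \eta$, with $\eta = \eta(d)$ small enough for the standard perturbative Strichartz estimates to close. Writing the Duhamel formula
\[
e^{-it_2 \Delta} u(t_2) - e^{-it_1 \Delta} u(t_1) = \mp i \int_{t_1}^{t_2} e^{-is\Delta}\bigl(|u|^{\frac{4}{d-2}}u\bigr)(s)\, ds,
\]
applying Strichartz to the right-hand side on each $I_j$, and summing shows that $\{e^{-it\Delta}u(t)\}$ is Cauchy in $\dot H^1$ as $t \to \pm \infty$; the resulting limits give the scattering states $u_\pm$.

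The main obstacle here is not mathematical but organizational: I need to verify that the sub-ground-state hypothesis as phrased above matches the conventions in \cite{focusing-d4, focusing-d345, focusing-dgeq5}, and that the constant in \eqref{intro/bounds} can be taken to depend only on $\|u_0\|_{\dot H^1}$ (and $d$) across all the cited references. Beyond this bookkeeping, the deduction of scattering from the global spacetime bound is entirely standard and introduces no new ideas.
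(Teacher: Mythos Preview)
Your proposal is correct and matches the paper's treatment: the paper does not give a proof of this theorem at all, instead introducing it as a summary of the cited works \cite{defocusing-d3,focusing-d4,focusing-d345,defocusing-d4,defocusing-dgeq5}. Your additional sketch of deriving \eqref{intro/scattering} from \eqref{intro/bounds} via a Strichartz-on-small-intervals argument is standard and also contained in those references, so there is nothing to add.
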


The property \eqref{intro/scattering} is widely known as {\em scattering}. It indicates that solutions to the nonlinear equation \eqref{NLS} asymptotically resemble solutions to the linear Schr\"odinger equation. This then begs the question of which properties of the linear Schr\"odinger flow are exhibited by solutions to \eqref{NLS}. In particular, one characteristic of the linear Schr\"odinger equation is {\em dispersive decay}:
\begin{equation}\label{intro/linear dispersive decay}
    \|e^{it\Delta} f\|_{L_x^p} \lesssim_{p,d} |t|^{-d(\frac{1}{2} - \frac{1}{p})} \|f\|_{L^{p'}},
\end{equation}
for $2 \leq p \leq \infty$. This is derived by interpolating between the conservation of mass,
\begin{equation}\label{intro/conservation of mass}
    \|e^{it\Delta} f\|_{L^2} = \|f\|_{L^2},
\end{equation}
which follows from the Plancherel theorem, and the dispersive estimate,
\begin{equation}\label{intro/pointwise decay}
    \|e^{it\Delta} f\|_{L_x^\infty} \lesssim |t|^{-d/2} \|f\|_{L^1},
\end{equation}
which follows from the fundamental solution to the linear Schr\"odinger equation.

In light of scattering \eqref{intro/scattering} and the linear dispersive decay \eqref{intro/linear dispersive decay}, it is natural to wonder whether solutions to \eqref{NLS} also exhibit dispersive decay. We answer this for spatial dimensions $d = 3,4$ in the following theorem. 
We focus on dimensions $d = 3,4$ as these lead to polynomial nonlinearities which are the most physical. In addition, complications arise for dimensions $d \geq 5$ due to the low power of the nonlinearity; see \eqref{integrable/smallness} and the following note. 
\begin{theorem}\label{theorem}
    Fix $d \in \{3,4\}$ and $p$ such that
    \begin{equation*}\begin{cases}
        2 < p \leq \infty, & \text{ if } d = 3 \\
        2 < p < \infty, & \text{ if } d = 4.
    \end{cases}\end{equation*}
    Given $u_0 \in L^{p'} \cap \dot{H}^1(\R^d)$ satisfying the hypotheses of Theorem \ref{well-posedness}, let $u(t)$ denote the unique global solution to \eqref{NLS} with initial data $u_0$. Then
    \begin{equation}\label{theorem/decay}
        \|u(t)\|_{L_x^p} \leq C\big(\|u_0\|_{\dot{H}^1},d,p\big) |t|^{-d(\frac{1}{2} - \frac{1}{p})} \|u_0\|_{L^{p'}}
    \end{equation}
    uniformly for $t \neq 0$.
\end{theorem}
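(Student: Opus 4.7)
The plan is to run a bootstrap argument on the normalized quantity
\[
    M(T) = \sup_{0 < |t| \leq T}\frac{|t|^{\alpha}\|u(t)\|_{L^p_x}}{\|u_0\|_{L^{p'}}}, \qquad \alpha = d\bigl(\tfrac{1}{2}-\tfrac{1}{p}\bigr),
\]
and to prove that $M(T) \leq C(\|u_0\|_{\dot H^1}, d, p)$ uniformly in $T > 0$, which is equivalent to \eqref{theorem/decay}. Combining Duhamel's formula with the linear dispersive estimate \eqref{intro/linear dispersive decay} yields
\[
    \|u(t)\|_{L^p} \lesssim |t|^{-\alpha}\|u_0\|_{L^{p'}} + \int_0^t |t-s|^{-\alpha}\|u(s)\|_{L^q}^{(d+2)/(d-2)}\, ds,\qquad q = \tfrac{(d+2)p'}{d-2},
\]
so the problem reduces to controlling the nonlinear integral. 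I plan to estimate $\|u(s)\|_{L^q}$ by interpolating among the bootstrap bound $\|u(s)\|_{L^p} \leq M(T)|s|^{-\alpha}\|u_0\|_{L^{p'}}$, the Sobolev embedding $\|u(s)\|_{L^{2d/(d-2)}} \lesssim \|u(s)\|_{\dot H^1} \leq C(\|u_0\|_{\dot H^1})$ from Theorem \ref{well-posedness}, and, when $q < 2d/(d-2)$, conserved mass $\|u(s)\|_{L^2} = \|u_0\|_{L^2}$, noting that $u_0 \in L^2$ by H\"older interpolation between $L^{p'}$ and $L^{2d/(d-2)} \hookleftarrow \dot H^1$ (valid for any $p > 2$).

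Next I would split the time integral at $s = t/2$. For $s \in [0,t/2]$ the kernel $|t-s|^{-\alpha}$ is comparable to $|t|^{-\alpha}$ and factors outside, leaving $\int_0^{t/2}\|u(s)\|_{L^q}^{(d+2)/(d-2)}\, ds$ to be controlled via the finite space-time bound \eqref{intro/bounds}. For $s \in [t/2,t]$, the bootstrap hypothesis produces the desired decay factor $|s|^{-\alpha} \lesssim |t|^{-\alpha}$ and the kernel $|t-s|^{-\alpha}$ is integrable when $\alpha < 1$; for $\alpha \geq 1$, one must work at a mixed space-time level instead, replacing the pointwise dispersive estimate by an inhomogeneous Strichartz estimate and extracting a small constant from the tail $\|u\|_{L^{2(d+2)/(d-2)}_{t,x}([T_0,\infty))}$ for $T_0$ large. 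The output should be an inequality
\[
    M(T) \leq C_0 + C_1\, M(T)^\sigma,
\]
with $\sigma$ determined by the interpolation and $C_1$ tunable to be small; combined with local well-posedness (ensuring $M$ is finite for moderate $T$) and continuity in $T$, this closes the bootstrap and yields the claimed decay.

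The principal obstacle is the regime $p > 2d/(d-2)$ (so $\alpha > 1$ and simultaneously $q < 2d/(d-2)$). Then the dispersive kernel is non-integrable near $s = t$, and the interpolation for $\|u(s)\|_{L^q}$ must descend below the Sobolev exponent $2d/(d-2)$, forcing reliance on mass conservation obtained through the initial-data interpolation described above. These two constraints must be balanced carefully; in particular, the non-integrability of the kernel has to be absorbed into smallness of the scattering norm on an outer tail interval, and the power $\sigma$ must be shown to be compatible with the exponent allocations for both Hölder in space and integrability in time. The case $d = 4,\ p = \infty$ is precisely where this balance degenerates logarithmically, which is why the theorem excludes it.
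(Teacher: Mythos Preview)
Your outline has the right architecture—Duhamel, dispersive estimate, split at $t/2$, bootstrap—but two load-bearing steps fail to deliver the \emph{linear} dependence on $\|u_0\|_{L^{p'}}$ that the theorem asserts, and the Lorentz machinery that makes the exponents close is missing entirely.

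The bootstrap must be linear in the $X$-norm. The paper does not interpolate $\|u(s)\|_{L^q}$; it applies H\"older directly to place exactly one copy of $u$ in $L^p_x$ (the bootstrap norm) and the remaining $\tfrac{4}{d-2}$ copies in a Lorentz--Strichartz norm bounded purely by $C(\|u_0\|_{\dot H^1})$ via Proposition~\ref{spacetime bounds}. This produces an inequality of the form $\|u\|_{X(T_j)} \lesssim \|u_0\|_{L^{p'}} + C(\|u_0\|_{\dot H^1})\|u\|_{X(T_{j-1})} + \eta^{4/(d-2)}\|u\|_{X(T_j)}$, closed by partitioning time into finitely many intervals on which the Lorentz--Strichartz norm is $<\eta$. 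Your inequality $M(T)\le C_0 + C_1 M(T)^\sigma$ with $\sigma\neq 1$ cannot yield linear dependence on $\|u_0\|_{L^{p'}}$, and invoking conserved mass makes this worse: $\|u_0\|_{L^2}$ itself carries a power of $\|u_0\|_{L^{p'}}$ through your interpolation. Moreover, after extracting $|t|^{-\alpha}$ you are left with $\int_0^{t/2}|s|^{-\alpha}(\cdots)\,ds$; since $|s|^{-\alpha}$ lies only in $L_s^{1/\alpha,\infty}$, any Lebesgue-space H\"older in time loses a positive power of $t$. The Lorentz refinements in Propositions~\ref{sb/Lorentz-Strichartz estimates} and~\ref{spacetime bounds} are exactly what allow the critical exponent $1/\alpha$ to be hit without loss.

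Your early-time treatment is also broken: bounding $\int_0^{t/2}\|u(s)\|_{L^q}^{(d+2)/(d-2)}\,ds$ by the scattering norm \eqref{intro/bounds} yields a quantity of size $C(\|u_0\|_{\dot H^1})$, not $\|u_0\|_{L^{p'}}$, so after normalizing, $M(T)$ diverges as $\|u_0\|_{L^{p'}}\to 0$. In the non-integrable regime the paper instead feeds the already-proved integrable case back in (Lemmas~\ref{d3/early time}, \ref{d4/early time}): one factor of $u$ is placed in some $L^r_x$ with $2<r<\tfrac{2d}{d-2}$, bounded by $C(\|u_0\|_{\dot H^1})|s|^{-\beta}\|u_0\|_{L^{r'}}$, and $\|u_0\|_{L^{r'}}$ is converted to exactly one power of $\|u_0\|_{L^{p'}}$ by interpolation against $\|u_0\|_{L^{2d/(d-2)}}\lesssim\|u_0\|_{\dot H^1}$. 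For the late-time interval with $\alpha\ge 1$, the paper does not switch to inhomogeneous Strichartz; it applies Sobolev embedding \emph{before} the dispersive estimate (see \eqref{bp/Sobolev} and Lemma~\ref{d3/Linfty Sobolev} for $d=3,\ p=\infty$) to push the effective exponent below $\tfrac{2d}{d-2}$ and restore integrability of the kernel.
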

We believe that the preceding theorem is the optimal nonlinear analogue of dispersive decay \eqref{intro/linear dispersive decay}. Notably, it only requires that initial data lie in the scaling-critical space $\dot{H}^1$, it recovers a linear dependence on the initial data, and it has constants which depend only on the size of the initial data.

Absent from Theorem \ref{theorem} is $L^\infty$-decay for spatial dimension $d = 4$. The methods presented here fail for that case; see the discussion preceding Lemma \ref{d3/Linfty Sobolev} and the progression from \eqref{d4/failure 1} to \eqref{d4/failure 2}. In general, none of our methods appear able to show decay which is $|t|^{-1 - s_c}$ or faster for initial data in a critical Sobolev space $\dot{H}^{s_c}$. This restriction also appears in \cite{mc-2024}, which studies the mass-critical nonlinear Schr\"odinger equation.

In spite of this, if we restrict our initial data to the Besov space $\dot{B}^1_{2,1}$, we can show full dispersive decay for spatial dimension $d = 4$. This Besov space is scaling-critical, but is strictly stronger than the Sobolev space $\dot{H}^1$. In particular, $\dot{B}^1_{2,1} \hookrightarrow \dot{H}^1$. As an immediate corollary of this Besov result, we gain full dispersive decay for initial data in $\dot{H}^\alpha \cap \dot{H}^\beta$ for $\alpha < 1 < \beta$; see Corollary \ref{edge/corollary}. 
\begin{theorem}\label{edge case theorem}
    Given $u_0 \in L^{1} \cap \dot{B}^1_{2,1}(\R^4)$ satisfying the hypotheses of Theorem \ref{well-posedness}, let $u(t)$ denote the unique global solution to \eqref{NLS} with initial data $u_0$. Then
    \begin{equation}\label{intro/edge case decay}
        \|u(t)\|_{L_x^\infty} \leq C\big(\|u_0\|_{\dot{B}^1_{2,1}},p\big) |t|^{-2} \|u_0\|_{L^{1}}
    \end{equation}
    uniformly for $t \neq 0$.
\end{theorem}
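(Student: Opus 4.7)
\emph{Plan.} The plan is to extend the proof of Theorem \ref{theorem} to the endpoint case $d = 4$, $p = \infty$ by using the Besov hypothesis to recover the summability that is lost at the barrier $|t|^{-1-s_c}$. Beginning from the Duhamel identity
\begin{equation*}
u(t) = e^{it\Delta} u_0 \mp i \int_0^t e^{i(t-s)\Delta}\bigl(|u|^2 u\bigr)(s)\,ds,
\end{equation*}
I would set up a bootstrap on $M(T) := \sup_{0 < |t| \leq T} |t|^{2}\|u(t)\|_{L^\infty}$ and aim to close $M(T) \leq C(\|u_0\|_{\dot B^1_{2,1}})\,\|u_0\|_{L^1}$ uniformly in $T$. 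The linear piece $e^{it\Delta} u_0$ is immediately controlled by the linear dispersive estimate \eqref{intro/pointwise decay}.

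\emph{Splitting the Duhamel integral.} For the nonlinear integral I would split the time variable at $|s| = |t|/2$. On the far portion the kernel satisfies $|t-s|^{-2} \lesssim |t|^{-2}$, so the matter reduces to bounding $\int_0^{|t|/2} \|u(s)\|_{L^3}^3\,ds$; since $(q,r) = (3,3)$ is Strichartz-admissible in $d = 4$, this integral is controlled by $C(\|u_0\|_{\dot H^1})$ via the global $L^6_{t,x}$ bound from Theorem \ref{well-posedness}. The near portion $|t|/2 \leq |s| \leq |t|$ is where the Sobolev-based argument fails, producing the logarithmic divergence alluded to in the discussion following Theorem \ref{theorem}. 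I would circumvent this by Littlewood--Paley decomposing the nonlinearity as $|u|^2 u = \sum_N P_N(|u|^2 u)$ and estimating each frequency shell using Bernstein combined with a dual Strichartz bound for $P_N \int_{|t|/2}^{t} e^{i(t-s)\Delta}(|u|^2 u)\,ds$. Summing in $N$, the $\ell^1$ summability of $\dot B^1_{2,1}$ takes the place of the $\ell^2$ structure of $\dot H^1$ and removes the log loss, allowing the bootstrap to close.

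\emph{Main obstacle.} The principal technical hurdle is propagating Besov regularity through the nonlinear flow and threading it through the bootstrap. Concretely, I expect to need paraproduct estimates for $|u|^2 u$ in $\dot B^1_{2,1}$-type norms, together with the global Strichartz control furnished by Theorem \ref{well-posedness}. Matching the Bernstein factor $2^{dN/2} = 2^{2N}$ against the $\ell^1$ weight $2^N$ coming from $\dot B^1_{2,1}$ — so that the frequency sum converges — demands an additional $2^{-N}$ of smoothing, which I anticipate extracting from the cubic nonlinearity via the paraproduct structure together with a judicious choice of Strichartz exponents that absorbs one derivative. Pinning down this bookkeeping so that the prefactor multiplying $M(T)$ on the right-hand side can be made small — thereby closing the bootstrap — is the main step.
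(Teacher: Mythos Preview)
Your architecture---Littlewood--Paley decomposition of the nonlinearity, paraproduct, $\ell^1$ Besov summability---matches the paper's, but the proposal has a genuine gap at exactly the step you flag as the main obstacle: making the prefactor on $M(T)$ small. When the near-portion estimate is carried out, whether by Bernstein-plus-dual-Strichartz or by the paper's cutoff-and-optimize argument, the coefficient multiplying the bootstrap quantity comes out as a product of norms of the type $\sum_N N\|u_N\|_{L^\infty_t L^2_x}$, i.e., Besov norms that are $L^\infty$ in time. These do not shrink on short time intervals, so the standard small-interval partition (the unstated mechanism behind your ``judicious choice of Strichartz exponents'') is unavailable here; the paper says this explicitly at the start of Section~\ref{edge}. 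The paper therefore closes the bootstrap by a different device entirely: induction on the size of $\|u_0\|_{\dot B^1_{2,1}}$. One writes $u_0 = v_0 + w_0$ with $\|w_0\|_{\dot B^1_{2,1}} \leq \epsilon$, proves a $\dot B^1_{2,1}$ stability result (Proposition~\ref{besov/stability}) so that the difference $w = u - v$ obeys $\sum_N N\|w_N\|_{L^\infty_t L^2_x} \leq C(R_0)\epsilon$, and runs the bootstrap on $\|w\|_X$ rather than $\|u\|_X$. The smallness now comes from the perturbation size $\epsilon$, not from any time interval, and the inductive hypothesis handles $\|v\|_X$.

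There is a second, smaller gap in your far-portion estimate: bounding $\int_0^{|t|/2}\|u(s)\|_{L^3}^3\,ds$ by $\|u\|_{L^3_{t,x}}^3$ yields $|t|^{-2}\,C(\|u_0\|_{L^2},\|u_0\|_{\dot H^1})$ rather than $|t|^{-2}\,C(\|u_0\|_{\dot B^1_{2,1}})\|u_0\|_{L^1}$, so the linear dependence on $\|u_0\|_{L^1}$ demanded by the theorem is lost. The paper uses Lemma~\ref{d4/early time} (which is valid at $p=\infty$) for this step; that lemma threads the already-proven integrable-case decay through the early-time integral to recover the $\|u_0\|_{L^{p'}}$ factor.
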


With scattering \eqref{intro/scattering} established, it is natural to investigate the final-state problem, where a scattering state $u_{\pm}$ is given and $u$ is sought. Indeed, this is reminiscent of particle experiments where particles are made to interact and the resulting states are measured. Standard arguments then imply that this final-state problem is globally well-posed and scatters for initial data which satisfies the hypotheses of Theorem \ref{well-posedness}.

For solutions to the final-state problem, we then establish full dispersive estimates, analogous to Theorems \ref{theorem} and \ref{edge case theorem}. Remarkably, though the solution may reach a singularity in $L^p$ at the interaction time $t = 0$, dispersive decay persists through this singularity.

\begin{theorem}\label{scattering theorem}
    Fix $d \in \{3,4\}$ and $p$ such that
    \begin{equation*}\begin{cases}
        2 < p \leq \infty, & \text{ if } d = 3 \\
        2 < p < \infty, & \text{ if } d = 4.
    \end{cases}\end{equation*}
    Given $u_{\pm} \in L^{p'} \cap \dot{H}^1(\R^d)$ satisfying the hypotheses of Theorem \ref{well-posedness}, let $u(t)$ denote the unique global solution to \eqref{NLS} with final-state $u_\pm$. Then
    \begin{equation*}
        \|u(t)\|_{L^p} \leq C\big(\|u_{\pm}\|_{\dot{H}^1(\R^d)},d,p\big) |t|^{-d(\frac{1}{2} - \frac{1}{p})} \|u_\pm\|_{L^{p'}}
    \end{equation*}
    uniformly for $t \neq 0$.
\end{theorem}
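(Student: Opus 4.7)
Without loss of generality we treat $u_+$ as the given scattering state; the case of $u_-$ follows by time reversal. Under the hypotheses of Theorem \ref{well-posedness}, standard final-state theory (built on the spacetime bound \eqref{intro/bounds}) produces a unique global solution $u \in C_t \dot H_x^1$ with $\|u(t) - e^{it\Delta} u_+\|_{\dot H^1} \to 0$ as $t \to +\infty$, satisfying the Duhamel representation
\[
    u(t) = e^{it\Delta} u_+ \mp i \int_t^{\infty} e^{i(t-s)\Delta}\bigl(|u|^{\frac{4}{d-2}}u\bigr)(s)\, ds \quad\text{for all } t \in \R.
\]

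The plan is to mimic the continuity argument underlying Theorem \ref{theorem} (and Theorem \ref{edge case theorem} in the $d=4$, $p=\infty$ case), substituting the above $+\infty$-based Duhamel formula for the $0$-based one used in the initial-value problem. I would introduce the bootstrap quantity
\[
    M := \sup_{t \neq 0}\, |t|^{d(\frac{1}{2}-\frac{1}{p})}\,\|u(t)\|_{L_x^p},
\]
and aim to show $M \leq C(\|u_+\|_{\dot H^1},d,p)\,\|u_+\|_{L^{p'}}$. Applying the dispersive estimate \eqref{intro/linear dispersive decay} to both terms bounds $\|u(t)\|_{L^p}$ by $|t|^{-d(\frac12-\frac1p)}\,\|u_+\|_{L^{p'}}$ plus a dispersively-weighted integral of $\|(|u|^{\frac{4}{d-2}}u)(s)\|_{L^{p'}}$ over $s \in [t,\infty)$. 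H\"older's inequality combined with the bootstrap assumption reduces the $L^{p'}$-norm of the nonlinearity to a weighted power of $M$ times scaling-critical spacetime norms of $u$ controlled by \eqref{intro/bounds}, and this should allow the continuity argument to close exactly as in the initial-value setting.

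The main obstacle is the region near the singular time $s = 0$ in the Duhamel integrand, in particular when $t < 0$ so that the integration traverses $s = 0$. There the bootstrap bound $\|u(s)\|_{L^p} \lesssim |s|^{-d(\frac12-\frac1p)}$ threatens to render $\|(|u|^{\frac{4}{d-2}}u)(s)\|_{L^{p'}}$ non-integrable against the dispersive weight $|t-s|^{-d(\frac12-\frac1p)}$, mirroring the difficulty in the proof of Theorem \ref{theorem} near $t = 0$. I expect this is resolved by the same ingredients developed for Theorems \ref{theorem} and \ref{edge case theorem}: a dyadic-in-time decomposition of the Duhamel integral, use of the scale-invariant spacetime integrability \eqref{intro/bounds} on individual dyadic windows near $s = 0$, and careful exploitation of the scaling-critical structure of the nonlinearity. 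These techniques should transfer essentially verbatim, with the only structural change being the replacement of the integration range $[0,t]$ by $[t,\infty)$.
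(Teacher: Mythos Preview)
Your plan is essentially the paper's: the argument for Theorem~\ref{theorem} is rerun with the $+\infty$-based Duhamel formula, and the ingredients of Sections~\ref{integrable}--\ref{bp} transfer. Two points the paper makes explicit sharpen your sketch. First, for $t<0$ the decomposition is simply $[t,\infty)=[t,t/2)\cup[t/2,\infty)$: on $[t,t/2)$ one has $|s|\gtrsim|t|$ and runs the late-time bootstrap estimate, while on $[t/2,\infty)$ one has $|t-s|\gtrsim|t|$ and applies the analogues of Lemmas~\ref{d3/early time} and~\ref{d4/early time}, which never insert the bootstrap bound and hence never see the $|s|^{-d(1/2-1/p)}$ singularity you flag; no dyadic-in-time windows near $s=0$ are needed. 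Second, for $t>0$ the integration range $[t,\infty)$ avoids $s=0$ altogether, but the bootstrap must be reversed: one works with $X(T)=\sup_{t\geq T}|t|^{d(1/2-1/p)}\|u(t)\|_{L^p}$, partitions $[0,\infty)$ into intervals $I_j=[T_j,T_{j+1})$ on which the relevant Lorentz--Strichartz norm is small, and iterates the inequality $\|u\|_{X(T_j)}\lesssim \|u_+\|_{L^{p'}}+C(\|u_+\|_{\dot H^1})\|u\|_{X(T_{j+1})}+\eta^{4/(d-2)}\|u\|_{X(T_j)}$ from $j=J$ down to $j=1$. Your single global quantity $M$ would not close without this interval structure, since the global spacetime norm is not small.
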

\begin{theorem}\label{scattering edge case theorem}
    Given $u_\pm \in L^{1} \cap \dot{B}^1_{2,1}(\R^4)$ satisfying the hypotheses of Theorem \ref{well-posedness}, let $u(t)$ denote the unique global solution to \eqref{NLS} with final-state $u_\pm$. Then
    \begin{equation*}
        \|u(t)\|_{L^\infty} \leq C\big(\|u_\pm\|_{\dot{B}^1_{2,1}},p\big) |t|^{-\frac{d}{2}} \|u_\pm\|_{L^{1}}
    \end{equation*}
    uniformly for $t \neq 0$.
\end{theorem}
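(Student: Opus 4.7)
By the time-reversal symmetry $u(t,x)\mapsto\overline{u(-t,x)}$ of \eqref{NLS}, it suffices to treat the future final state $u_+$ and prove the estimate for $t>0$. My plan is to mirror the proof of Theorem \ref{edge case theorem}, but with the Duhamel representation based at $+\infty$ rather than at $0$:
\begin{equation*}
    u(t) \;=\; e^{it\Delta}u_+ \;+\; i\int_t^{+\infty} e^{i(t-s)\Delta}\bigl(|u|^2 u\bigr)(s)\,ds, \qquad t>0,
\end{equation*}
in spatial dimension $d=4$. The linear dispersive decay \eqref{intro/pointwise decay} immediately dominates the first term by $C|t|^{-2}\|u_+\|_{L^1}$, so the only remaining task is to establish the same bound on the Duhamel integral.

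For this I would run a bootstrap/continuity argument on the quantity $M(t) := |t|^2\|u(t)\|_{L^\infty}$. A starting bound for large $t$ comes from applying linear dispersive decay to $e^{it\Delta}u_+$ together with an easy estimate on the tail of the Duhamel integral, using the $L^p$-decay furnished by Theorem \ref{scattering theorem}. Fixing a constant $A$ large in terms of $\|u_+\|_{\dot{B}^1_{2,1}}$, I would then show that the bootstrap hypothesis $M(s)\leq A\|u_+\|_{L^1}$ on $[t,\infty)$ implies the improved bound $M(t)\leq \tfrac{1}{2}A\|u_+\|_{L^1}$; a standard continuity argument then extends the estimate down to all $t>0$. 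Auxiliary bounds needed in the bootstrap come from interpolation between the bootstrap $L^\infty$ control and the $L^p$-decay of Theorem \ref{scattering theorem}, together with persistence of $\|u(s)\|_{\dot{B}^1_{2,1}}$ along the flow (which I would establish separately via Strichartz estimates and fractional Leibniz rules).

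The main obstacle is that the naive pointwise-in-time dispersive bound
\begin{equation*}
    \bigl\|e^{i(t-s)\Delta}(|u|^2 u)(s)\bigr\|_{L^\infty} \;\lesssim\; |s-t|^{-2}\,\|u(s)\|_{L^3}^{3},
\end{equation*}
combined with $\|u(s)\|_{L^3}\lesssim |s|^{-2/3}$ from Theorem \ref{scattering theorem}, produces a kernel $(s-t)^{-2}$ which is not integrable near $s=t$. My plan --- following the strategy used to prove Theorem \ref{edge case theorem} --- is to Littlewood-Paley decompose the nonlinearity and, for each dyadic frequency $N$, take the minimum of the dispersive bound and the Bernstein-type bound $N^{2}\|P_N(|u|^2 u)(s)\|_{L^2}$. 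The high-frequency (Bernstein) regime removes the singularity at $s=t$ and the low-frequency (dispersive) regime handles the tail; summing the resulting frequency-localized estimates converts the singular time integration into a geometric sum in $N$ that is summable only because of the $\ell^1$-structure of the Besov norm $\dot{B}^1_{2,1}$. This is precisely why the argument succeeds for $\dot{B}^1_{2,1}$ data even though it fails for $\dot{H}^1$ data. Bounding $\|P_N(|u|^2 u)(s)\|_{L^1}$ and $\|P_N(|u|^2 u)(s)\|_{L^2}$ via Bernstein, fractional Leibniz, the bootstrap $L^\infty$ bound, and the $L^p$-decay of Theorem \ref{scattering theorem} then closes the estimate, with the interval $[t,\infty)$ playing the role of $[0,t]$ in the corresponding initial-value computation.
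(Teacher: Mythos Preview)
Your frequency-localized estimate is exactly the right engine: taking the minimum of the dispersive bound $|t-s|^{-2}\|P_N(|u|^2u)\|_{L^1}$ and the Bernstein bound $N^2\|P_N(|u|^2u)\|_{L^2}$, optimizing the crossover, and summing via a paraproduct is precisely what the paper does (see \eqref{edge/optimized}). The gap is in how you close the bootstrap. Running your argument with $f=g=h=u$ in \eqref{edge/optimized} produces
\[
\|u\|_X \;\lesssim\; \|u_+\|_{L^1} \;+\; \Bigl(\textstyle\sum_N N\|u_N\|_{L_t^\infty L_x^2}\Bigr)^{\!2}\,\|u\|_X,
\]
and the coefficient in front of $\|u\|_X$ is $\sim C(\|u_+\|_{\dot B^1_{2,1}})$, not small. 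No choice of bootstrap constant $A$ can absorb this term: the inequality $C_1 + C_2 A \le \tfrac12 A$ has no solution once $C_2\ge \tfrac12$. The paper explicitly notes that the usual device of partitioning time into intervals of small spacetime norm is unavailable here, because the relevant norms are $L_t^\infty$-based (see the remark preceding \eqref{edge/optimized} and the opening paragraph of Section~\ref{edge}).

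The paper's fix is to \emph{induct on the size of the final state}. One writes $u_+ = v_+ + w_+$ with $\|v_+\|_{\dot B^1_{2,1}}$ slightly below the current threshold (so the estimate is already known for the solution $v$ with final state $v_+$) and $\|w_+\|_{\dot B^1_{2,1}}\le \epsilon$. Setting $w=u-v$, the $\dot B^1_{2,1}$ stability result (Proposition~\ref{besov/stability}) forces $\sum_N N\|w_N\|_{L_t^\infty L_x^2}\le C(R_0)\epsilon$. One then bootstraps on $\|w\|_X$ rather than $\|u\|_X$: after expanding $|u|^2u-|v|^2v$ and applying \eqref{edge/optimized} to each schematic piece, every term that retains the factor $\|w\|_X$ is accompanied by at least one small Besov factor of $w$, giving a coefficient $C(R_0)\epsilon$ that \emph{is} small. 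The one term $\schem(wv^2)$ with no bootstrap factor is handled by the inductive hypothesis on $v$. Your proposal is missing this perturbative layer; without it the argument does not close for large data.
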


The arguments for the final-state problem are nearly identical to the arguments for the initial-value problem in Theorems \ref{theorem} and \ref{edge case theorem}. As such, we do not restate the entire proof; instead, in Section \ref{scattering} we present an example of the necessary changes.

\subsection*{Prior work}
    Prior to the development of Strichartz estimates, dispersive decay of the form \eqref{intro/linear dispersive decay} was the primary tool for understanding long-time behavior of solutions.
    As such, these estimates were closely tied to the study of well-posedness and scattering and required smooth initial data; see 
    \cite{cubic-1986, we-1985, general-1983, cubic-1978, segal} for examples.
    
    Given recent successes in scaling-critical well-posedness and scattering (such as Theorem \ref{well-posedness}), subsequent work has significantly lowered the regularity required for dispersive decay.
    However, until \cite{mc-2024}, these results have required strictly more regularity than well-posedness or scattering and have been unable to demonstrate a linear dependence on the initial data in the sense of \eqref{intro/linear dispersive decay}.

    In \cite{mc-2024}, Fan, Killip, Vi\c{s}an, and Zhao demonstrated dispersive decay for the mass-critical nonlinear Schr\"odinger equation for initial data in the scaling-critical space $L^2$.
    In the same sense as Theorem \ref{theorem}, the result of \cite{mc-2024} is the optimal nonlinear analogue of \eqref{intro/linear dispersive decay} for the mass-critical nonlinear Schr\"odinger equation.
    In Section \ref{integrable}, we adapt the methods of \cite{mc-2024} to prove Theorem \ref{theorem} for $2 < p < \frac{2d}{d-2}$.
    For $p \geq \frac{2d}{d-2}$, the methods of \cite{mc-2024} no longer apply to \eqref{NLS} and so we present a more nuanced argument which exploits the energy-criticality of our model.
    
    For our model \eqref{NLS} in particular, dispersive decay was shown for initial data with $H^{1+}$ regularity in \cite{ec-2024}.
    We note that \cite{ec-2024} worked primarily on the hyperbolic space $\mathbb{H}^3$ and then remarked on the extension to $\R^3$. 
    This improved on previous results which required initial data with $H^3$ regularity, see \cite{ec-2021,ec-2022}.
    In this paper, we lower the needed regularity to the scaling-critical spaces $\dot{H}^1$ and $\dot{B}^1_{2,1}$.

    The study of dispersive decay is extensive, with many contributing authors and variations. As an overview of the topic, we direct the reader to the following sources and references therein. For work on a variety of nonlinear Schr\"odinger equations, see \cite{cubic-2022,cubic-2023,cubic-1986,general-1983,cubic-1978}. 
    For work on various wave equations, see \cite{we-1980,we-1985,general-1983,ec-we,rel-we-1972, segal}.
    For recent work on the generalized Korteweg–de Vries and Zakharov-Kuznetsov equations, see \cite{gkdv}.
    Finally, for work on a variety of completely integrable models, see \cite{dnls-2024,cmdnls-2024,kdv-2023,bo-2019}.

    In the note \cite{ec-we-2024}, we adapt the methods presented here to the energy-critical nonlinear wave equation.

    \subsection*{Acknowledgements}
    The author was supported in part by NSF grants DMS-2154022 and DMS-2054194. 
    The author is grateful to Monica Vi\c{s}an and Rowan Killip for their discussions and guidance.
    
    This project was partially carried out during the author's visit to the Nonlinear Waves and Relativity thematic program at the Erwin Schr\"odinger Institute.

    \subsection*{Notation}

We use the standard notation $A \lesssim B$ to indicate that $A \leq C B$ for some universal constant $C > 0$ that will change from line to line. If both $A \lesssim B$ and $B \lesssim A$ then we use the notation $A \sim B$. When the implied constant fails to be universal, the relevant dependencies will be indicated within the text or included as subscripts on the symbol.

We abbreviate the maximum and minimum of two numbers $a$ and $b$ as $a \vee b = \max(a,b)$ and $a \wedge b = \min(a,b)$ respectively.

When working with embeddings, we will always use $\hookrightarrow$ to mean a continuous embedding, i.e. $X \hookrightarrow Y$ if the inclusion map $X \to Y$ satisfies $\|f\|_Y \lesssim \|f\|_X$.

Our conventions for the Fourier transform are
\begin{equation*}
\widehat{f}(\xi) = \tfrac{1}{\sqrt{2\pi}}\int e^{-i \xi x} f(x) dx \quad \text{so} \quad f(x) = \tfrac{1}{\sqrt{2\pi}} \int e^{i \xi x} \widehat{f}(\xi) d\xi.
\end{equation*}
This Fourier transform is unitary on $L^2$ and yields the standard Plancherel identities.
When a function $f(t,x)$ depends on both time and space, we let $\widehat{f}(t,\xi)$ denote the Fourier transform of $f$ in only the spatial variable.

For $s \geq 0$, we define the homogeneous Sobolev space $\dot{H}^s$ as the completion of the Schwartz functions $\mathcal{S}(\R)$ with respect to the norm
\begin{equation*}
    \|f\|^2_{\dot{H}^s} = \int |\xi|^{2s} |\widehat{f}(\xi)|^2 d\xi.
\end{equation*}

With this definition of the Fourier transform, we define the {\em Littlewood--Paley projections} as follows: Let $\varphi$ denote a smooth bump function supported on $\{|\xi| \leq 2\}$ such that $\varphi(\xi) = 1$ for $|\xi| \leq 1$. For dyadic numbers $N \in 2^\Z$, we then define $P_{\leq N}$, $P_{> N}$, and $P_N$ as
\begin{align*}
    \widehat{P_{\leq N} f} & = \varphi(\xi/N) \widehat{f}(\xi) \\
    \widehat{P_{> N} f} & = [1-\varphi(\xi/N)] \widehat{f}(\xi) \\
    \widehat{P_N f}(\xi) & = [\varphi(\xi/N) - \varphi(2\xi/N)] \widehat{f}(\xi).
\end{align*}
We will often denote $P_N f = f_N$, $P_{\leq N}f = f_{\leq N}$ and $P_{>N}f = f_{>N}$. As above, when a function $f(t,x)$ depends on both time and space, we let $f_N(t,x)$ denote the Littlewood--Paley projection of $f$ in only the spatial variable $x$.

As Fourier multipliers, the Littlewood--Paley projections commute with derivative operators and the free Schr\"odinger propagator. Moreover, they are bounded on $L^p$ for all $1 \leq p \leq \infty$ and on $\dot{H}^s$ for all $s \in \R$. For $1 < p < \infty$, we have that
\begin{equation*}
    \sum_{N \in 2^\Z} P_N f \to f \quad\text{in $L^p$}.
\end{equation*}
In addition, $P_N$, $P_{\leq N}$, and $P_{>N}$ are bounded pointwise by a constant multiple of the Hardy--Littlewood maximal function,
\begin{equation*}
    |P_N f| + |P_{\leq N} f| \lesssim Mf.
\end{equation*}

Associated to the Littlewood--Paley projections are the {\em Bernstein inequalities}, which state
\begin{equation}\begin{split}\label{intro/bernstein}
    \||\nabla|^s P_N f\|_{L^p} & \sim N^s \|P_N f\|_{L^p} \\
    \||\nabla|^s P_{\leq N} f\|_{L^p} & \lesssim N^s \|P_N f\|_{L^p} \\
    \|P_N f\|_{L^p},\|P_{\leq N} f\|_{L^p} & \lesssim N^{d(\frac{1}{q} - \frac{1}{p})} \|P_N f\|_{L^q}
\end{split}\end{equation}
for all $s \geq 0$ and $1 \leq q \leq p \leq \infty$.
Additionally, we have the Littlewood--Paley square function estimate which states
\begin{equation*}
    \big\|\|f_N(x)\|_{\ell_N^2}\big\|_{L^p} \sim \|f\|_{L^p},
\end{equation*}
for $1 < p < \infty$.

Following \cite{defocusing-d3}, we use $\schem(X)$ to denote a term that is schematically like $X$. That is, a finite linear combination of terms that look like $X$ but potentially with some terms replaced by their absolute value, complex conjugate, or a Littlewood--Paley projection. 
For examples, see \eqref{edge/duhamel w} and Lemma \ref{besov/paraproduct} where we will write
\begin{equation}\label{notation/schem}
    |v + w|^2(v+w) = \sum_{j=0}^3 \schem(v^j w^{3-j}) \qtq{and} f_{\leq N/8} \cdot g_{N_1} \cdot h = \schem(f g_{N_1} h).
\end{equation}


We use $L_t^p L_x^q(T \times X)$ to denote the mixed Lebesgue spacetime norm
\begin{equation*}
    \|f\|_{L_t^p L_x^q(T\times X)} = \big\| \|f(t,x)\|_{L^q(X,dx)} \big\|_{L^p(T,dt)} = \bigg[ \int_T \bigg(\int_X |f(t,x)|^q dx\bigg)^{p/q} dt\bigg]^{1/p}.
\end{equation*}
When $p = q$, we let $L^p_{t,x} = L_t^p L_x^p$. When $X = \R^d$, we let $L_t^pL_x^q(T) = L_t^pL_x^q(T\times\R^d)$. This is generalized to mixed Lorentz spacetime norms $L_t^{p,\theta} L_x^{q,\phi}$ in the obvious way; see Definition \ref{Lorentz/definition}.
        
\section{Lorentz theory}\label{sb}
    Throughout our analysis, it will be crucial to employ Lorentz refinements of standard inequalities and methods. Here we recall the definition and properties of Lorentz spaces which are needed. For a textbook treatment of Lorentz spaces, we direct the reader to \cite{grafakos}. 

\begin{definition}[Lorentz space]\label{Lorentz/definition}
    Fix $d \geq 1$, $1 \leq p < \infty$, and $0 < q \leq \infty$. The Lorentz space $L^{p,q}$ is the space of measurable functions $f : \R^d \to \C$ which have finite quasinorm
    \begin{equation}\label{Lorentz/quasi-norm}
        \|f\|_{L^{p,q}(\R^d)} = p^{1/q} \Big\|\lambda \big|\{x \in \R^d : |f(x)| > \lambda\}\big|^{1/p} \Big\|_{L^q((0,\infty), \frac{d\lambda}{\lambda})},
    \end{equation}
    where $|*|$ denotes the Lebesgue measure on $\R^d$.
\end{definition}
It follows that $L^{p,q}$ is a quasi-Banach space for any $1 \leq p < \infty$ and $0 < q \leq \infty$. The inclusion of $0 < q < 1$ will be necessary for the proof of Lemma \ref{d4/early time}; see \eqref{d4/early time/1} and Lemma \ref{sb/nonlinearity}.

In the case of $1 < p < \infty$ and $1 \leq q \leq \infty$, we find that
\begin{equation*}
    \|f\|_{L^{p,q}} \sim_{p,q} \sup_{\|g\|_{L^{p',q'}} = 1} \bigg|\int f(x) \overline{g(x)} dx\bigg|
\end{equation*}
where $p',q'$ are the respective H\"older conjugates. Therefore for all $1 < p < \infty$ and $1 \leq q \leq \infty$, it follows that $L^{p,q}$ is normable. In the case of $p = q$,  $L^{p,p}(\R^d)$ coincides with the standard Lebesgue space $L^p(\R^d)$. We then use the convention that $L^{\infty,\infty} = L^\infty$ and leave $L^{\infty,q}$ undefined for $q < \infty$. 

From direct calculation with \eqref{Lorentz/quasi-norm}, we find that
\begin{equation}\label{Lorentz/identity}
    \big\||x|^{-d/p}\big\|_{L^{p,\infty}(\R^d)} \sim_d 1,
\end{equation}
and hence $|x|^{-d/p} \in L^{p,\infty}(\R^d)$ for all $p \geq 1$. This is the extent to which we will use the exact form of \eqref{Lorentz/quasi-norm}.

In the same manner as the sequence spaces $\ell^q$, the Lorentz spaces $L^{p,q}$ satisfy a nesting property in the second index $q$. In particular, we have the continuous embedding $L^{p,q_1} \hookrightarrow L^{p,q_2}$, i.e.
\begin{equation*}
    \|*\|_{L^{p,q_2}} \lesssim_{p,q_1,q_2} \|*\|_{L^{p,q_1}},
\end{equation*}
for all $0 < q_1 \leq q_2 \leq \infty$. 

Lorentz spaces arise most naturally as real interpolation spaces between the usual $L^p$ spaces. This is achieved through the Hunt interpolation inequality, otherwise known as the off-diagonal Marcinkiewicz interpolation theorem; see \cite{Hunt1, Hunt2}. We recall a specific case of the theorem here:
\begin{lemma}[Hunt interpolation]
    Fix $1 \leq p_1,p_2, q_1, q_2 \leq \infty$ such that $p_1 \neq p_2$ and $q_1 \neq q_2$. Let $T$ be a sublinear operator which satisfies
    \begin{equation*}
        \|Tf\|_{L^{p_i}} \lesssim_{p_i,q_i} \|f\|_{L^{q_i}}
    \end{equation*}
    for $i \in \{1,2\}$. Then for all $\theta \in (0,1)$ and all $0 < r \leq \infty$,
    \begin{equation*}
        \|Tf\|_{L^{p_\theta, r}} \lesssim_{p_\theta,q_\theta,r} \|f\|_{L^{q_\theta,r}}
    \end{equation*}
    where $\frac{1}{p_\theta} = \frac{\theta}{p_1} + \frac{1-\theta}{p_2}$ and $\frac{1}{q_\theta} = \frac{\theta}{q_1} + \frac{1-\theta}{q_2}$.
\end{lemma}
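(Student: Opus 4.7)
The plan is to follow the classical Marcinkiewicz real interpolation argument in Hunt's form. Without loss of generality assume $p_1 < p_2$. Since $L^{q_i,1} \hookrightarrow L^{q_i,q_i} = L^{q_i}$ and $L^{p_i} = L^{p_i,p_i} \hookrightarrow L^{p_i,\infty}$, the given strong-type hypothesis immediately yields the restricted weak-type bound
\begin{equation*}
\|Tf\|_{L^{p_i,\infty}} \lesssim \|f\|_{L^{q_i,1}}, \qquad i \in \{1,2\},
\end{equation*}
so it suffices to interpolate between these weaker assumptions.

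Given $f \in L^{q_\theta,r}$, for each level $\lambda > 0$ I split $f = g_\lambda + b_\lambda$, where $b_\lambda = f \cdot \chi_{\{|f| > \mu(\lambda)\}}$ and $g_\lambda = f - b_\lambda$. The threshold $\mu(\lambda)$ is taken to be a suitable power of $\lambda$, chosen so that the two weak-type estimates balance at the interpolated exponent. Sublinearity of $T$ then gives
\begin{equation*}
\bigl|\{|Tf| > \lambda\}\bigr| \leq \bigl|\{|Tg_\lambda| > \lambda/2\}\bigr| + \bigl|\{|Tb_\lambda| > \lambda/2\}\bigr|.
\end{equation*}
I apply the $L^{q_2,1} \to L^{p_2,\infty}$ bound to the bounded part $g_\lambda$ and the $L^{q_1,1} \to L^{p_1,\infty}$ bound to the large part $b_\lambda$, which controls each distributional set in terms of $\|g_\lambda\|_{L^{q_2,1}}$ and $\|b_\lambda\|_{L^{q_1,1}}$, respectively.

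Substituting these estimates into the defining integral for $\|Tf\|_{L^{p_\theta,r}}$ and rewriting the $L^{q_i,1}$ norms as weighted integrals of the decreasing rearrangement $f^*$, a change of variables reduces the problem to a Hardy-type inequality for $f^*$. With $\mu(\lambda)$ chosen so that the scaling weights on the two sides match, this Hardy inequality produces precisely $\|f\|_{L^{q_\theta,r}}$ on the right. The relations $\tfrac{1}{p_\theta} = \tfrac{\theta}{p_1} + \tfrac{1-\theta}{p_2}$ and $\tfrac{1}{q_\theta} = \tfrac{\theta}{q_1} + \tfrac{1-\theta}{q_2}$ enter precisely as the compatibility conditions needed for the Hardy inequality to close.

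The main technical obstacle lies in the range $r < 1$, where $L^{p_\theta,r}$ is only a quasi-Banach space and the triangle inequality must be replaced by the quasi-form $(a+b)^r \leq a^r + b^r$. This requires a slightly more careful summation of the two pieces but does not alter the underlying strategy; the endpoint $r = \infty$ is handled analogously by replacing the integral by a supremum and the Hardy inequality by a pointwise bound on $f^{**}$. Since Hunt's theorem is standard and treated in the reference \cite{grafakos} cited earlier, one could also simply invoke it directly without reproducing this argument.
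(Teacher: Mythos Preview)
The paper does not prove this lemma at all; it is stated as a known result with citations to Hunt's original papers \cite{Hunt1, Hunt2}, and no argument is given. Your sketch of the classical Marcinkiewicz--Hunt argument is essentially correct and standard, and, as you yourself observe in the final sentence, simply invoking the cited references would already match the paper's treatment.
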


Lorentz spaces enjoy many of the standard estimates used in the Lebesgue spaces $L^p$. In particular, H\"older's inequality carries over in the following form:
\begin{lemma}[H\"older's inequality]
    Given $1 \leq p,p_1,p_2 \leq \infty$ and $0 < q, q_1, q_2 \leq \infty$ such that $\frac{1}{p} = \frac{1}{p_1} + \frac{1}{p_2}$ and $\frac{1}{q} = \frac{1}{q_1} + \frac{1}{q_2}$,
    \begin{equation*}
        \|fg\|_{L^{p,q}} \lesssim_{d,p_i,q_i} \|f\|_{L^{p_1,q_1}}\|g\|_{L^{p_2,q_2}}.
    \end{equation*}
\end{lemma}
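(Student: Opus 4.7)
The plan is to reduce this Lorentz-space H\"older inequality to the classical integral H\"older inequality via the non-increasing rearrangement. As a preliminary I would recall (from \cite{grafakos}) the equivalent characterization
\begin{equation*}
    \|f\|_{L^{p,q}} \sim \left\| t^{1/p} f^*(t) \right\|_{L^q((0,\infty),\, dt/t)},
\end{equation*}
where $f^*(t) = \inf\{\lambda > 0 : |\{|f| > \lambda\}| \leq t\}$ is the non-increasing rearrangement of $f$. This converts the inequality into one in a single one-dimensional weighted Lebesgue space, which is far easier to manipulate than the level-set quasi-norm \eqref{Lorentz/quasi-norm} directly.

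The core pointwise fact I would invoke is $(fg)^*(t) \leq f^*(t/2)\, g^*(t/2)$ for every $t>0$, which follows from the set inclusion $\{|fg| > \lambda\mu\} \subseteq \{|f|>\lambda\}\cup\{|g|>\mu\}$ applied with $\lambda = f^*(t/2)$ and $\mu = g^*(t/2)$; the right-hand side then has measure at most $t$. Substituting this into the rearrangement norm and splitting $t^{1/p} = t^{1/p_1}\cdot t^{1/p_2}$ (using $1/p = 1/p_1 + 1/p_2$) gives
\begin{equation*}
    \|fg\|_{L^{p,q}} \lesssim \left\| \left(t^{1/p_1} f^*(t/2)\right)\left(t^{1/p_2} g^*(t/2)\right) \right\|_{L^q((0,\infty),\, dt/t)}.
\end{equation*}
At this point I would apply the classical integral H\"older inequality for the measure $dt/t$ with exponents $q_1/q$ and $q_2/q$, which is admissible precisely because $1/q = 1/q_1 + 1/q_2$; a change of variables $t\mapsto 2t$ then produces the claimed bound. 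The case $q = \infty$ or $q_i = \infty$ is handled by a supremum argument, and the case $p_i = \infty$ by replacing the corresponding Lorentz norm by $L^\infty$ and pulling it out.

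The subtle point to verify is that the argument really covers the quasi-Banach regime where some of $q, q_1, q_2$ lie in $(0,1)$, which as noted after Definition \ref{Lorentz/definition} is needed later in the paper. The rearrangement characterization above is known to hold for all $0 < q \leq \infty$, and the constraint $1/q = 1/q_1 + 1/q_2$ forces $q \leq q_i$, so the outer H\"older exponents $q_i/q \geq 1$ stay in the Banach regime regardless of whether the $q_i$ themselves do. Consequently the quasi-norm character only enters through the rearrangement identification, and no genuine obstruction arises. The main thing to be careful about is simply tracking the implied constants, which in the quasi-norm regime depend on the triangle-inequality constant of $L^{p_i,q_i}$; this is why the lemma is stated with $\lesssim_{d,p_i,q_i}$ rather than with a sharp constant.
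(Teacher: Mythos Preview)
The paper does not actually supply a proof of this lemma; it is stated as a standard fact about Lorentz spaces, with the reader directed to \cite{grafakos} for background. So there is no ``paper's own proof'' to compare against.

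That said, your argument is correct and is precisely the standard route one finds in the references. The pointwise inequality $(fg)^*(t) \leq f^*(t/2)\,g^*(t/2)$ is justified exactly as you say, the rearrangement characterization of $\|\cdot\|_{L^{p,q}}$ is valid for all $0 < q \leq \infty$, and the observation that $q_i/q \geq 1$ keeps the one-dimensional H\"older step in the Banach regime is the right way to handle the quasi-Banach exponents the paper needs. Your treatment of the edge cases $q_i = \infty$ and $p_i = \infty$ is also adequate. Nothing further is required.
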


In addition, Lorentz spaces satisfy the Young--O'Neil convolutional inequality, see \cite{young-oneil-proof,lorentz-strichartz,young-oneil-introduction}, of which the Hardy--Littlewood--Sobolev inequality is a special case:
\begin{lemma}[Young--O'Neil convolutional inequality]
    Given $1 < p,p_1,p_2 < \infty$ and $0 < q, q_1,q_2 \leq \infty$ such that
    $\frac{1}{p} + 1 = \frac{1}{p_1} + \frac{1}{p_2}$ and $\frac{1}{q} = \frac{1}{q_1} + \frac{1}{q_2}$,
    \begin{equation*}
        \|f*g\|_{L^{p,q}} \lesssim_{d,p_i,q_i} \|f\|_{L^{p_1,q_1}} \|g\|_{L^{p_2,q_2}}.
    \end{equation*}
\end{lemma}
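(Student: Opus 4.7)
The plan is to obtain the inequality from O'Neil's rearrangement lemma, which controls the decreasing rearrangement of $f*g$ in terms of those of $f$ and $g$, reducing the problem to a one-dimensional weighted integral inequality that can be estimated by H\"older and Hardy inequalities in the measure $\tfrac{dt}{t}$.

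More precisely, recall O'Neil's bound
\[
(f*g)^{**}(t) \leq t\, f^{**}(t)\, g^{**}(t) + \int_t^\infty f^*(s)\, g^*(s)\, ds,
\]
where $f^*$ denotes the decreasing rearrangement of $f$ on $(0,\infty)$ and $f^{**}(t) := \tfrac{1}{t}\int_0^t f^*(s)\, ds$. For $1 < p < \infty$, the quasinorm $\|f\|_{L^{p,q}}$ is comparable to $\bigl\|t^{1/p} f^{**}(t)\bigr\|_{L^q(dt/t)}$, so the desired inequality reduces to
\[
\bigl\|t^{1/p}(f*g)^{**}(t)\bigr\|_{L^q(dt/t)} \lesssim \bigl\|t^{1/p_1} f^{**}(t)\bigr\|_{L^{q_1}(dt/t)} \bigl\|t^{1/p_2} g^{**}(t)\bigr\|_{L^{q_2}(dt/t)}.
\]
Inserting the rearrangement bound splits the left-hand side into two terms. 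The product term $t\, f^{**}(t)\, g^{**}(t)$ carries weights that factor as $t^{1/p}\cdot t = t^{1/p_1}\cdot t^{1/p_2}$ by the Young relation, so H\"older's inequality in $L^q(dt/t)$ with exponents $q_1, q_2$ yields the H\"older relation $\tfrac{1}{q} = \tfrac{1}{q_1}+\tfrac{1}{q_2}$ immediately. The tail term is treated by applying H\"older in $s$ against the weighted measure $s^{1/p_1+1/p_2}\tfrac{ds}{s}$ to replace $\int_t^\infty f^*(s)\,g^*(s)\,ds$ by a decreasing function of $t$, and then invoking a one-dimensional Hardy inequality to control the cutoff $\{s > t\}$.

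The main obstacle is this final Hardy step, which requires the weight $t^{1/p}$ to interact correctly with the cutoff integral and so forces the hypothesis $1 < p < \infty$, guaranteeing both normability of $L^{p,q}$ and boundedness of the relevant Hardy operator. I note in passing that Hunt interpolation as stated above is insufficient to recover this statement on its own: iterating it in the two convolution slots, starting from classical Young's inequality, produces only the diagonal case $q_1 = q_2 = q$, since Hunt preserves the outer Lorentz index through interpolation and thus cannot by itself decouple the three second indices under the H\"older relation $\tfrac{1}{q} = \tfrac{1}{q_1}+\tfrac{1}{q_2}$.
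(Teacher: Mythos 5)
The paper does not actually prove this lemma; it is imported from the literature (the references cited next to it), so there is no in-paper argument to compare against. Your proposal is the classical O'Neil proof, and its skeleton is correct: O'Neil's rearrangement inequality, the equivalence $\|f\|_{L^{p,q}}\sim\|t^{1/p}f^{**}(t)\|_{L^q(dt/t)}$ for $1<p<\infty$ (itself a Hardy-type estimate that uses the monotonicity of $f^*$ and is valid for all $0<q\le\infty$), and H\"older for the term $t\,f^{**}g^{**}$, where the weight arithmetic $t^{1+1/p}=t^{1/p_1+1/p_2}$ is exactly as you say. Your closing remark that Hunt interpolation alone cannot reach off-diagonal second indices is also a fair observation.

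The one step you should tighten is the tail term in the regime $0<q<1$, which the statement allows and which the paper genuinely uses (Corollary \ref{sb/nonlinearity} takes Lorentz exponents down to $\tfrac23$ when $d=4$). As written, ``H\"older in $s$'' with exponents $q_1,q_2$ is not available, since $\tfrac1{q_1}+\tfrac1{q_2}=\tfrac1q$ may exceed $1$, and the unrestricted one-dimensional Hardy inequality for the adjoint operator $\phi\mapsto t^{1/p}\int_t^\infty s^{-1/p}\phi(s)\,\tfrac{ds}{s}$ fails on $L^q(dt/t)$ when $q<1$. The standard repair is to use monotonicity before anything else: since $f^*g^*$ is non-increasing, $\int_t^\infty f^*g^*\,ds\le\sum_{k\ge0}2^kt\,f^*(2^kt)\,g^*(2^kt)$, so after multiplying by $t^{1/p}$, taking the $L^q(dt/t)$ quasinorm (with the $q$-triangle inequality when $q<1$) and changing variables $t\mapsto 2^{-k}t$, one gains a summable geometric factor $2^{-k/p}$ because $1/p>0$; only then apply H\"older to the product $t^{1/p_1}f^*\cdot t^{1/p_2}g^*$, which is legitimate for all positive $q_i$. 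With this reordering (monotonicity-based Hardy step first, H\"older last) your argument is complete on the full stated range; in the order you gave it, the $q<1$ case is not justified.
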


From Hunt interpolation and the usual Sobolev embedding theorems, we also find an analog of Sobolev embedding in Lorentz spaces,
\begin{lemma}[Sobolev embedding]
    Fix $1 < p < \infty$, $s \geq 0$, and $0 < \theta \leq \infty$ such that $\frac{1}{p} + \frac{s}{d} = \frac{1}{q}$. Then
    \begin{equation*}
        \|f\|_{L^{p,\theta}(\R^d)} \lesssim_{p,s,\theta} \big\||\nabla|^s f\big\|_{L^{q,\theta}(\R^d)}.
    \end{equation*}
\end{lemma}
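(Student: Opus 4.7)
The plan is to obtain the bound by real interpolation from two classical Sobolev embeddings, as the lemma's preamble advertises. The case $s = 0$ is vacuous, so assume $s > 0$. The scaling identity $\tfrac{1}{p} + \tfrac{s}{d} = \tfrac{1}{q}$ combined with $1 < p < \infty$ forces $0 < s < d$ and $1 < q < p < \infty$, so all exponents that will appear lie strictly inside the interpolation range.

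I would fix two auxiliary pairs $(q_i, p_i)$ for $i = 1, 2$ obeying the same scaling $\tfrac{1}{p_i} + \tfrac{s}{d} = \tfrac{1}{q_i}$, both with $1 < q_i < p_i < \infty$, and chosen so that $p_1 < p < p_2$. For each $i$ the standard homogeneous Sobolev embedding on $\R^d$ gives $\||\nabla|^{-s} g\|_{L^{p_i}} \lesssim \|g\|_{L^{q_i}}$. Applying the Hunt interpolation lemma to the sublinear operator $T : g \mapsto |\nabla|^{-s} g$, with interpolation parameter chosen so that $\tfrac{1}{p}$ is the prescribed convex combination of $\tfrac{1}{p_1}$ and $\tfrac{1}{p_2}$ (and hence, by the affine scaling, $\tfrac{1}{q}$ is the same convex combination of $\tfrac{1}{q_1}$ and $\tfrac{1}{q_2}$), yields
\[
    \||\nabla|^{-s} g\|_{L^{p,\theta}} \lesssim_{p,s,\theta} \|g\|_{L^{q,\theta}}
\]
for every $0 < \theta \leq \infty$. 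Substituting $g = |\nabla|^s f$ produces the desired inequality.

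If the interpolation bookkeeping feels opaque, a one-line alternative is to write $f = c_{d,s}\, |x|^{s-d} * (|\nabla|^s f)$ via the Riesz potential formula, place $|x|^{s-d} \in L^{d/(d-s),\infty}$ by the Lorentz identity \eqref{Lorentz/identity}, and apply Young--O'Neil directly; the kernel's weak-$L^{d/(d-s)}$ scaling supplies exactly the first-index relation $\tfrac{1}{p} + 1 = \tfrac{d-s}{d} + \tfrac{1}{q}$ required, while the second-index relation $\tfrac{1}{\theta} = 0 + \tfrac{1}{\theta}$ is automatic because the kernel sits in the $\infty$ slot. I do not foresee any substantive obstacle: the only care-point in either approach is confirming that the cited inequalities, Hunt interpolation and Young--O'Neil, carry the conclusion through the full range $0 < \theta \leq \infty$ (including the quasi-Banach regime $\theta < 1$), which is exactly how they have been recorded in the excerpt.
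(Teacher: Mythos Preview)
Your proposal is correct and matches the paper's approach exactly: the paper states the lemma with only the one-line justification ``From Hunt interpolation and the usual Sobolev embedding theorems,'' and your argument is precisely the natural unpacking of that hint. The Young--O'Neil alternative you sketch is also sound and is in fact the Lorentz refinement of Hardy--Littlewood--Sobolev, so either route closes the lemma with no gaps.
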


Finally, we may show a basic Leibniz rule in Lorentz spaces. We recall that the Schwartz functions $\mathcal{S}(\R^d)$ are dense in $L^{p,q}$ for $q \neq \infty$, see \cite{grafakos}. By the classical Leibniz rule and extending by density, we then find the following lemma:
\begin{lemma}[Leibniz rule]
    Given $1 < p,p_i < \infty$ and $0 < q,q_i < \infty$ for $i \in \{1,2,3,4\}$ such that $\frac{1}{p} = \frac{1}{p_1} + \frac{1}{p_2} = \frac{1}{p_3} + \frac{1}{p_4}$ and $\frac{1}{q} = \frac{1}{q_1} + \frac{1}{q_2} = \frac{1}{q_3} + \frac{1}{q_4}$,
    \begin{equation*}
        \|\nabla[fg]\|_{L^{p,q}} \lesssim_{d,p,q,p_i,q_i} \|\nabla f\|_{L^{p_1,q_1}}\|g\|_{L^{p_2,q_2}} + \|f\|_{L^{p_3,q_3}}\|\nabla g\|_{L^{p_4,q_4}}.
    \end{equation*}        
\end{lemma}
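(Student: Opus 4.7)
The plan is to follow the hint given in the sentence preceding the statement: establish the inequality for Schwartz pairs via the classical product rule and the Lorentz H\"older inequality just proven, then extend by density, using that $q_i < \infty$ for each index.

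For the Schwartz step, fix $f, g \in \mathcal{S}(\R^d)$ and use the pointwise Leibniz rule $\nabla(fg) = (\nabla f)\, g + f\, (\nabla g)$. Since $1 < p < \infty$ and $0 < q < \infty$, the $L^{p,q}$ quasi-norm satisfies a triangle-type inequality (up to a constant absorbed into $\lesssim$), and the Lorentz H\"older inequality of the previous lemma gives
\begin{equation*}
\|(\nabla f)\,g\|_{L^{p,q}} \lesssim \|\nabla f\|_{L^{p_1,q_1}} \|g\|_{L^{p_2,q_2}}, \qquad \|f\,(\nabla g)\|_{L^{p,q}} \lesssim \|f\|_{L^{p_3,q_3}} \|\nabla g\|_{L^{p_4,q_4}},
\end{equation*}
since the exponents match the hypothesized scaling relations. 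Adding these two bounds yields the stated inequality on Schwartz pairs.

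For the density step, given $f$ with $f \in L^{p_3, q_3}$ and $\nabla f \in L^{p_1, q_1}$, and similarly $g \in L^{p_2, q_2}$ with $\nabla g \in L^{p_4, q_4}$, use that $\mathcal{S}(\R^d)$ is dense in each $L^{p_i, q_i}$ (since all second indices are finite, as recalled from \cite{grafakos}) to pick Schwartz sequences $f_n \to f$ and $g_n \to g$ in the relevant Lorentz norms together with their gradients. Apply the Schwartz-level bound to the differences $f_n - f_m$ and $g_n - g_m$ (each cross-term controlled by a Schwartz approximation of fixed size, bounded uniformly) to see that $\nabla(f_n g_n)$ is Cauchy in $L^{p,q}$. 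By Lorentz H\"older, $f_n g_n \to fg$ in some Lorentz space, so the $L^{p,q}$ limit of $\nabla(f_n g_n)$ coincides with $\nabla(fg)$ in the distributional sense. Passing the Schwartz inequality to the limit gives the stated bound.

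The main obstacle is essentially bookkeeping: one must verify that the approximation can be chosen to converge simultaneously in both the function and gradient Lorentz norms, which follows from a standard truncation-and-mollification argument adapted to Lorentz spaces (using Hunt interpolation to show that convolution with a Schwartz mollifier is bounded on each $L^{p_i, q_i}$). Beyond this, the argument is entirely mechanical once Lorentz H\"older and Schwartz density are in hand.
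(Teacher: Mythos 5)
Your proposal is correct and follows the same route the paper indicates: the pointwise Leibniz rule combined with the Lorentz H\"older inequality on Schwartz pairs, then extension by density of $\mathcal{S}(\R^d)$ in $L^{p_i,q_i}$ for finite second indices. The extra detail you supply on simultaneous approximation of $f$ and $\nabla f$ (truncation plus mollification, with mollifier bounds via interpolation) is exactly the standard bookkeeping the paper leaves implicit.
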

    \subsection{Lorentz--Strichartz estimates}
        As observed in \cite{lorentz-strichartz}, the standard proof of Strichartz estimates for \eqref{NLS} admits a Lorentz extension using the Young--O'Neil convolutional inequality in place of Hardy--Littlewood--Sobolev. Here we allow both time and space to be placed in a Lorentz space, which presents a necessary strengthening of the inequalities used in \cite{mc-2024,lorentz-strichartz}.

\begin{definition}[Schr\"odinger-admissible]
    Fix a spatial dimension $d \geq 3$. We say that a pair $2 \leq p, q \leq \infty$ is \emph{Schr\"odinger-admissible} if
    \begin{equation*}\label{SAP}
        \frac{2}{p} + \frac{d}{q} = \frac{d}{2}.
    \end{equation*}
    We say that $(p,q)$ is a \emph{non-endpoint Schr\"odinger-admissible pair} if $2 < p,q < \infty$. Finally, we say that $(p,q)$ is \emph{Schr\"odinger-admissible with $s$ spatial derivatives} if
    \begin{equation*}
        \frac{2}{p} + d\bigg(\frac{1}{q} + \frac{s}{d}\bigg) = \frac{d}{2}.
    \end{equation*}
\end{definition}

\begin{proposition}[Lorentz--Strichartz estimates]\label{sb/Lorentz-Strichartz estimates}
    Suppose that $ 2 < p,q < \infty$ is a Schr\"odinger-admissible pair.
    Then for all $f \in L^2$ and any spacetime slab $J \times \R^d$, the linear evolution satisfies
    \begin{align}\label{sb/linear Strichartz}
        \big\|e^{it\Delta} f\big\|_{L_t^{p,2} L_x^{q,2}(J)} & \lesssim_{p,q} \|f\|_{L^2(\R^d)}.
    \end{align}
    Moreover, for all $0 < \theta \leq \infty$; $1 \leq \phi \leq \infty$; and any time-dependent interval $I(t) \subset J$,
    \begin{align}\label{sb/nonlinear Strichartz}
        \bigg\|\int_{I(t)} e^{i(t-s)\Delta}F(s,x) ds\bigg\|_{L_t^{p,\theta}L_x^{q,\phi}(J)} & \lesssim_{p,q,\theta,\phi} \|F\|_{L_t^{p',\theta}L_x^{q',\phi}(J)}.
    \end{align}
\end{proposition}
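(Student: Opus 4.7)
The plan is to run the standard $TT^*$ bootstrap with Lorentz refinements at every step. The two inputs are mass conservation $\|e^{it\Delta}f\|_{L^2} = \|f\|_{L^2}$ and the pointwise dispersive estimate $\|e^{it\Delta}f\|_{L^\infty} \lesssim |t|^{-d/2}\|f\|_{L^1}$. Applying Hunt interpolation to $e^{it\Delta}$ at each fixed $t$ upgrades these endpoints to the Lorentz dispersive bound
\[
\|e^{it\Delta}f\|_{L_x^{q,\phi}} \lesssim_{q,\phi} |t|^{-d(\frac{1}{2}-\frac{1}{q})} \|f\|_{L_x^{q',\phi}}
\]
for every $2 < q < \infty$ and every $1 \leq \phi \leq \infty$, with $\phi = 2$ feeding into \eqref{sb/linear Strichartz} and general $\phi$ feeding into \eqref{sb/nonlinear Strichartz}.

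Inserting this bound inside a time integral reduces the \emph{full-interval} version of \eqref{sb/nonlinear Strichartz} (replacing $\int_{I(t)}$ by $\int_J$) to the scalar convolution
\[
\Big\| \int_J |t-s|^{-d(\frac{1}{2}-\frac{1}{q})} g(s)\,ds \Big\|_{L_t^{p,\theta}} \lesssim \|g\|_{L_t^{p',\theta}}.
\]
Since $d(\frac{1}{2}-\frac{1}{q}) = 2/p$ by Schr\"odinger-admissibility and $|t|^{-2/p} \in L^{p/2,\infty}_t$ by \eqref{Lorentz/identity}, the Young--O'Neil convolutional inequality delivers this with first-index balance $\frac{1}{p}+1 = \frac{2}{p} + \frac{1}{p'}$ and second-index balance $\frac{1}{\theta} = \frac{1}{\infty} + \frac{1}{\theta}$. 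Specializing to $\theta = \phi = 2$ and pairing with the dual formulation then yields the homogeneous estimate \eqref{sb/linear Strichartz} through the standard $TT^*$ identity, which applies because $L^2$ is Hilbert.

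Finally, to replace $\int_J$ by $\int_{I(t)}$ for a time-dependent subinterval $I(t)=(a(t),b(t)) \subset J$, I will invoke the Christ--Kiselev lemma applied separately to the two retarded operators $F \mapsto \int^{b(t)} e^{i(t-s)\Delta}F(s)\,ds$ and $F \mapsto \int^{a(t)} e^{i(t-s)\Delta}F(s)\,ds$; the required strict inequality $p > p'$ is exactly the non-endpoint hypothesis $p > 2$. The main subtlety I anticipate is confirming that Christ--Kiselev operates cleanly in the mixed time-space Lorentz setting for the off-diagonal second indices $0 < \theta \leq \infty$ and $1 \leq \phi \leq \infty$: the argument should reduce to an atomic decomposition of the $L^{p',\theta}_t$-valued input with values in the Banach space $L^{q',\phi}_x$, though the quasi-Banach regime $\theta < 1$ may warrant slightly more care than the textbook $L^p$ version.
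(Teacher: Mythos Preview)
Your overall plan is sound and matches the paper through the Lorentz dispersive estimate, Young--O'Neil, and the $TT^*$ step for \eqref{sb/linear Strichartz}. The only real divergence is in how you pass from the full interval $J$ to the time-dependent $I(t)$ in \eqref{sb/nonlinear Strichartz}.

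The paper does not use Christ--Kiselev at all. Instead it handles $I(t)$ \emph{before} the time convolution: since $1 \leq \phi \leq \infty$ ensures $L_x^{q,\phi}$ is normable, Minkowski's integral inequality gives
\[
\Big\|\int_{I(t)} e^{i(t-s)\Delta}F(s)\,ds\Big\|_{L_x^{q,\phi}} \leq \int_{I(t)} \|e^{i(t-s)\Delta}F(s)\|_{L_x^{q,\phi}}\,ds \leq \int_J |t-s|^{-d(\frac12-\frac1q)}\|F(s)\|_{L_x^{q',\phi}}\,ds,
\]
after which Young--O'Neil in $t$ closes the estimate immediately. This is cleaner and, crucially, places no constraint whatsoever on $\theta$: the quasi-Banach regime $0 < \theta < 1$ that you flag as a potential headache for Christ--Kiselev simply never arises, because the only triangle inequality used is in the \emph{spatial} Lorentz space, which is Banach by hypothesis. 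Your route via Christ--Kiselev would ultimately work (suitable Lorentz/vector-valued versions exist), but it imports genuine technical overhead---atomic decompositions, care with quasi-norms in the target $L_t^{p,\theta}$---that the paper's ordering of operations avoids entirely.
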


\begin{proof}
    We argue akin to the usual proof of Strichartz estimates and begin with linear dispersive decay. Applying the Hunt interpolation inequality to \eqref{intro/conservation of mass} and \eqref{intro/pointwise decay}, we find that for $2 < q < \infty$ and $0 < \phi \leq \infty$,
    \begin{equation}\label{sb/Lorentz dispersive}
        \|e^{it\Delta} f\|_{L_x^{q,\phi}} \lesssim_{q,\phi} |t|^{-d(\frac{1}{2} - \frac{1}{q})}\|f\|_{L_x^{q',\phi}}.
    \end{equation}
    
    We now prove \eqref{sb/nonlinear Strichartz}. For $1 < p < \infty$ and $1 \leq \phi \leq \infty$, we recall that $L_x^{p,\phi}$ is normable. Then for any time-dependent interval $I(t) \subset J$ and any spacetime slab $J \times \R^d$, the triangle inequality and \eqref{sb/Lorentz dispersive} imply
    \begin{align*}
        \bigg\|\int_{I(t)} \hspace{-1pt}e^{i(t-s)\Delta}F(s,x) ds \bigg\|_{L_t^{p,\theta}L_x^{q,\phi}(J)} & \lesssim_{q,\phi} \bigg\| \int_J \big\|e^{i(t-s)\Delta}F(s,x)\big\|_{L_x^{q,\phi}}ds \bigg\|_{L_t^{p,\theta}(J)}  \\
        & \lesssim_{q,\phi} \bigg\| \int_J |t-s|^{-d(\frac{1}{2} - \frac{1}{q})} \|F(s,x)\|_{L_x^{q',\phi}}ds \bigg\|_{L_t^{p,\theta}(J)}.
    \end{align*}
    For all $0 < \theta \leq \infty$, the Young-O'Neil convolutional inequality and \eqref{Lorentz/identity} then imply
    \begin{align*}
        \bigg\|\int_{I(t)} e^{i(t-s)\Delta}F(s,x) ds \bigg\|_{L_t^{p,\theta}L_x^{q,\phi}(J)} 
        & \lesssim_{p,q,\theta,\phi} \Big\||t|^{-d(\frac{1}{2} - \frac{1}{q})}\Big\|_{L_t^{\frac{2q}{d(q-2)},\infty}} \|F\|_{L_t^{p',\theta}L_x^{q',\phi}(J)} \\
        & \lesssim_d \|F\|_{L_t^{p',\theta}L_x^{q',\phi}(J)},
    \end{align*}
    which concludes the proof of \eqref{sb/nonlinear Strichartz}.
    
    To prove \eqref{sb/linear Strichartz}, we proceed with a $TT^*$ argument. Define $T : L_x^2 \to L_t^{p,2}L_x^{q,2}$ by
    \begin{equation*}
         [T f](t,x) = [e^{it\Delta} f](x).
    \end{equation*}
    Then
    $TT^* : L_t^{p',2}L_x^{q',2} \to L_t^{p,2}L_x^{q,2}$ is given by
    \begin{equation*}
        [TT^*F](t,x) = \int e^{i(t-s)\Delta} F(s,x) ds.
    \end{equation*}
    Applying \eqref{sb/nonlinear Strichartz} with $I(t) = \R$ and $\phi = \theta = 2$ to $TT^*$, we then find that $TT^*$ is bounded $L_t^{p',2}L_x^{q',2} \to L_t^{p,2}L_x^{q,2}$. This implies that $T$ is bounded $L_x^2 \to L_t^{p,2}L_x^{q,2}(J)$ and hence concludes the proof of the proposition.
\end{proof}
    \subsection{Lorentz spacetime bounds}
        We may now prove global bounds in mixed Lorentz spacetime norms for solutions to \eqref{NLS}. We present the proof for all spatial dimensions $d \geq 3$ and all non-endpoint Schr\"odinger-admissible pairs.
\begin{proposition}[Spacetime bounds]\label{spacetime bounds}
    Fix $d \geq 3$ and $\phi,\theta \geq 2$. Suppose that $2 < p,q < \infty$ is a Schr\"odinger-admissible pair and suppose that $u_0 \in \dot{H}^1(\R^d)$ satisfies the hypotheses of Theorem \ref{well-posedness}. Then the corresponding global solution $u(t)$ to \eqref{NLS} with initial data $u_0$ satisfies
    \begin{equation*}
        \|\nabla u\|_{L_t^{p,\theta}L_x^{q,\phi}} \leq C(\|u_0\|_{\dot{H}^1}).
    \end{equation*}

    The same estimate holds for the final-state problem with $u_0$ replaced by $u_{\pm}$.
\end{proposition}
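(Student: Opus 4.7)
The plan is a standard partition-bootstrap argument carried out in Lorentz spaces. By Theorem \ref{well-posedness}, $\|u\|_{L^{2(d+2)/(d-2)}_{t,x}(\R)} \leq C(\|u_0\|_{\dot{H}^1})$, so for $\eta > 0$ to be chosen small I partition $\R$ into $N = N(\eta, \|u_0\|_{\dot{H}^1})$ disjoint intervals $I_j = [t_j, t_{j+1}]$ on which $\|u\|_{L^{2(d+2)/(d-2)}_{t,x}(I_j)} \leq \eta$. It suffices to bound $\|\nabla u\|_{L^{p,\theta}_t L^{q,\phi}_x(I_j)}$ uniformly on each $I_j$; the global bound on $\R$ then follows from the triangle inequality in the normable Lorentz space $L^{p,\theta}_t L^{q,\phi}_x$.

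On each $I_j$, the Duhamel formula $u(t) = e^{i(t-t_j)\Delta} u(t_j) \pm i\int_{t_j}^t e^{i(t-s)\Delta} F(u)(s)\,ds$ with $F(u) = |u|^{4/(d-2)}u$, followed by $\nabla$ and the $L^{p,\theta}_t L^{q,\phi}_x(I_j)$-norm, combined with Proposition \ref{sb/Lorentz-Strichartz estimates} and the Lorentz nesting $L^{p,2}_t L^{q,2}_x \hookrightarrow L^{p,\theta}_t L^{q,\phi}_x$ (valid since $\theta,\phi \geq 2$), reduces matters to estimating
\[
\|\nabla F(u)\|_{L^{p',\theta}_t L^{q',\phi}_x(I_j)},
\]
with the linear contribution $\|\nabla u(t_j)\|_{L^2}$ bounded by $C(\|u_0\|_{\dot{H}^1})$ via energy conservation (and coercivity from $\|u_0\|_{\dot{H}^1} < \|W\|_{\dot{H}^1}$ in the focusing case). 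By Leibniz, $\nabla F(u) = \mathcal{O}(|u|^{4/(d-2)}\nabla u)$.

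For the \emph{working pair} $p = q = 2(d+2)/d$, H\"older in Lorentz spaces together with the rescaling identity $\||u|^\alpha\|_{L^{s,\infty}} = \|u\|^\alpha_{L^{\alpha s,\infty}}$ gives, after an exponent check confirming that the H\"older bookkeeping collapses to the scattering space,
\[
\|\nabla F(u)\|_{L^{p',\theta}_t L^{q',\phi}_x(I_j)} \lesssim \eta^{4/(d-2)}\|\nabla u\|_{L^{p,\theta}_t L^{q,\phi}_x(I_j)},
\]
which is absorbed into the left-hand side for sufficiently small $\eta$, closing the bootstrap.

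For a \emph{general} admissible pair $(p, q)$, I apply H\"older so that the $\nabla u$ factor lands in the working-pair Lorentz norm,
\[
\|\nabla F(u)\|_{L^{p',\theta}_t L^{q',\phi}_x(I_j)} \lesssim \||u|^{4/(d-2)}\|_{L^{a,\infty}_t L^{b,\infty}_x(I_j)} \|\nabla u\|_{L^{2(d+2)/d,\theta}_t L^{2(d+2)/d,\phi}_x(I_j)}.
\]
A direct admissibility computation shows the resulting space for $u$ corresponds to a Schr\"odinger-admissible pair with exactly one spatial derivative of gap; Lorentz Sobolev embedding plus the nesting $L^r \hookrightarrow L^{r,\infty}$ then reduces the $|u|^{4/(d-2)}$ factor to a standard Strichartz bound on $\nabla u$, available from the well-posedness theory for any admissible pair. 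The $\nabla u$ factor is controlled by the working-pair bound, so the whole nonlinear term is bounded by a constant depending only on $\|u_0\|_{\dot{H}^1}$ --- no further absorption is required. Summing over the $N$ intervals completes the proof; the argument for the final-state problem is identical in structure, using the same global scattering-norm bound for solutions with prescribed final state. The main obstacle is orchestrating the H\"older split in this second case: the working-pair estimate must be established first, and the exponents must be chosen to leave a $|u|^{4/(d-2)}$ factor whose induced space is exactly admissible modulo one Sobolev derivative so as to match an existing Strichartz bound.
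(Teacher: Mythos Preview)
Your argument is correct in outline, but it takes a considerably longer route than the paper's. The key difference is this: you run a partition-bootstrap in Lorentz spaces, first establishing the bound for the symmetric working pair $p=q=\tfrac{2(d+2)}{d}$ by absorption, and then bootstrapping general pairs off of that. The paper's proof dispenses with the partition and the bootstrap entirely by exploiting a single Lorentz-nesting observation on the \emph{dual} side. Since $p,q>2$ forces $p',q'<2\le\theta,\phi$, one has $L_t^{p'}L_x^{q'}\hookrightarrow L_t^{p',\theta}L_x^{q',\phi}$, so after one application of the inhomogeneous Lorentz--Strichartz estimate the nonlinear term is bounded directly by $\|\nabla(|u|^{4/(d-2)}u)\|_{L_t^{p'}L_x^{q'}}$, a \emph{pure Lebesgue} norm. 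A single H\"older split then reduces this to $\|\nabla u\|_{L_t^pL_x^q}\|u\|^{4/(d-2)}_{L_t^{\tilde p}L_x^{\tilde q}}$, both of which are already controlled by the standard (non-Lorentz) Strichartz bounds \eqref{sb/intermediate bounds} inherited from the well-posedness theory. No smallness parameter, no intervals, no absorption.

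Your approach buys nothing extra here and incurs two costs. First, you need a priori finiteness of $\|\nabla u\|_{L_t^{p,\theta}L_x^{q,\phi}(I_j)}$ to close the absorption step; this is handled by the usual density/continuity considerations, but you do not mention it. Second, in your general-pair step you fix the $\nabla u$ factor in the working-pair norm, which forces the $|u|^{4/(d-2)}$ factor into a specific pair $(\tilde p,\tilde q)$; one should check that the associated Strichartz pair stays non-endpoint for all admissible $(p,q)$ in every dimension covered by the statement. The paper's single nesting trick sidesteps both issues.
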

\begin{proof}
    We focus on the initial-value problem first before remarking on the needed changes for the final-state problem. 
    
    It is well-known that Strichartz estimates and \eqref{intro/bounds} imply spacetime bounds for all Schr\"odinger-admissible pairs, see e.g.~\cite{defocusing-d3, defocusing-d4}. Thus for all Schr\"odinger-admissible pairs $(p,q)$,
    \begin{equation}\label{sb/intermediate bounds}
        \|\nabla u\|_{L_t^p L_x^q} \leq C\big(\|u_0\|_{\dot{H}^1}\big).
    \end{equation}

    We then turn our attention to the Lorentz case and recall the Duhamel formula:
    \begin{equation}\label{sb/duhamel}
        u(t) = e^{it\Delta}u_0 \mp i \int_{0}^t e^{i(t-s)\Delta} \big[|u|^{\frac{4}{d-2}}u\big](s) ds.
    \end{equation}
    For all $\theta,\phi \geq 2$, Proposition \ref{sb/Lorentz-Strichartz estimates} then implies
    \begin{equation*}\begin{split}
        \|\nabla u\|_{L_t^{p,\theta}L_x^{q,\phi}} 
        & \lesssim \|\nabla u_0\|_{L_x^2} + \big\|\nabla|u|^\frac{4}{d-2} u\big\|_{L_t^{p',\theta}L_x^{q',\phi}}.
    \end{split}\end{equation*}
    Consider an arbitrary non-endpoint Schr\"odinger-admissible pair $(p,q)$. As $p,q > 2$, it follows that $p' < \theta$ and $q' < \phi$. By the nesting of Lorentz spaces, we may then estimate
    \begin{equation*}\begin{split}
        \|\nabla u\|_{L_t^{p,\theta}L_x^{q,\phi}} 
        & \lesssim \|\nabla u_0\|_{L_x^2} + \big\|\nabla|u|^\frac{4}{d-2} u\big\|_{L_t^{p'}L_x^{q'}} \\
        & \lesssim \|\nabla u_0\|_{L_x^2} + \|\nabla u\|_{L_t^{p}L_x^{q}}\big\|u\big\|_{L_t^{\frac{4p}{(d-2)(p-2)}}L_x^\frac{4q}{(d-2)(q-2)}}^\frac{4}{d-2}.
    \end{split}\end{equation*}
    A quick calculation shows that $\big(\frac{4p}{(d-2)(p-2)},\frac{4q}{(d-2)(q-2)}\big)$ is a non-endpoint Schr\"odinger-admissible pair with one spatial derivative. With \eqref{sb/intermediate bounds}, this concludes the proof of the proposition for the initial-value problem.

    To adapt the preceding proof to the final-state problem, the only modifications needed are to adjust the interval of integration in \eqref{sb/duhamel} to $(-\infty, t)$ for $u_-$ and to $(t,\infty)$ with an added negative sign for $u_+$. All other changes are only in notation.
\end{proof}

An unfortunate weakness of Proposition \ref{spacetime bounds} is the inability to control Lorentz exponents below $2$. This will be necessary in the proof of Lemma \ref{d4/early time}; see \eqref{d4/early time/1}. Though this level of control appears inaccessible for the linear evolution, with \eqref{sb/nonlinear Strichartz} we can gain additional control over the nonlinear correction.
\begin{corollary}[Nonlinear correction bounds]\label{sb/nonlinearity}
    Fix $d \geq 3$, $\theta \geq \frac{2(d-2)}{d+2}$, and $\phi \geq \max\big(\frac{2(d-2)}{d+2},1\big)$. Suppose that $2 < p,q < \infty$ is a Schr\"odinger-admissible pair, and suppose that $u_0 \in \dot{H}^1(\R^d)$ satisfies the hypotheses of Theorem \ref{well-posedness}. Then the corresponding global solution $u(t)$ to \eqref{NLS} with initial data $u_0$ satisfies
    \begin{equation*}
        \bigg\|\nabla \int_{0}^t e^{i(t-s)\Delta}\big[|u|^\frac{4}{d-2}u \big](s) ds\bigg\|_{L_t^{p,\theta}L_x^{q,\phi}(\R\times\R^d)} \leq C(\|u_0\|_{\dot{H}^1}).
    \end{equation*}
    The same estimate holds for the final-state problem with the interval of integration changed to $(-\infty,t)$ {\em (}resp.\ $(t,\infty)${\em)} and $u_0$ replaced by $u_-$ {\em(}resp.\ $u_+${\em)}.
\end{corollary}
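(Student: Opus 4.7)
The plan is to invoke the nonlinear Lorentz--Strichartz estimate \eqref{sb/nonlinear Strichartz} to transfer the Duhamel integral onto the nonlinearity, and then reduce to the spacetime bound already supplied by Proposition \ref{spacetime bounds}. Since $\phi \geq 1$, the estimate \eqref{sb/nonlinear Strichartz} applies and yields
\begin{equation*}
\bigg\|\nabla \int_{0}^t e^{i(t-s)\Delta}\big[|u|^\frac{4}{d-2}u \big](s) ds\bigg\|_{L_t^{p,\theta}L_x^{q,\phi}} \lesssim \big\|\nabla\big[|u|^\frac{4}{d-2} u\big]\big\|_{L_t^{p',\theta}L_x^{q',\phi}}.
\end{equation*}

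Next, a pointwise chain rule gives $\big|\nabla\big[|u|^{4/(d-2)} u\big]\big| \lesssim |u|^{4/(d-2)} |\nabla u|$, and Hölder in Lorentz spaces then yields
\begin{equation*}
\big\|\nabla\big[|u|^\frac{4}{d-2} u\big]\big\|_{L_t^{p',\theta}L_x^{q',\phi}} \lesssim \|\nabla u\|_{L_t^{p,\theta_1}L_x^{q,\phi_1}} \|u\|_{L_t^{\tilde p,\theta_2}L_x^{\tilde q,\phi_2}}^{4/(d-2)},
\end{equation*}
subject to the exponent relations $\tfrac{1}{p'} = \tfrac{1}{p} + \tfrac{4/(d-2)}{\tilde p}$, $\tfrac{1}{q'} = \tfrac{1}{q} + \tfrac{4/(d-2)}{\tilde q}$, $\tfrac{1}{\theta} = \tfrac{1}{\theta_1} + \tfrac{4/(d-2)}{\theta_2}$, and $\tfrac{1}{\phi} = \tfrac{1}{\phi_1} + \tfrac{4/(d-2)}{\phi_2}$. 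A calculation identical to the one in Proposition \ref{spacetime bounds} shows that $(\tilde p,\tilde q) = \big(\tfrac{4p}{(d-2)(p-2)}, \tfrac{4q}{(d-2)(q-2)}\big)$ is non-endpoint Schrödinger-admissible with one spatial derivative, so Lorentz Sobolev embedding replaces $\|u\|_{L_x^{\tilde q,\phi_2}}$ by $\|\nabla u\|_{L_x^{q^*,\phi_2}}$ for the corresponding standard Schrödinger-admissible pair $(\tilde p, q^*)$.

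I then distribute the Lorentz exponents evenly, setting $\theta_1 = \theta_2 = \theta \cdot \tfrac{d+2}{d-2}$ and $\phi_1 = \phi_2 = \phi \cdot \tfrac{d+2}{d-2}$. The hypotheses $\theta \geq \tfrac{2(d-2)}{d+2}$ and $\phi \geq \max\big(\tfrac{2(d-2)}{d+2},1\big)$ are exactly what is needed to force $\theta_i, \phi_i \geq 2$, at which point Proposition \ref{spacetime bounds} bounds both factors by a constant depending on $\|u_0\|_{\dot{H}^1}$. The final-state case follows verbatim after adjusting the Duhamel interval to $(-\infty,t)$ or $(t,\infty)$ and replacing $u_0$ by $u_\pm$, as indicated at the end of the proof of Proposition \ref{spacetime bounds}.

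The main delicate point is ensuring that the pointwise chain rule and its subsequent Hölder-plus-Sobolev decomposition remain valid in Lorentz spaces when $\tfrac{4}{d-2}$ is not an integer (i.e., $d \geq 5$); this can be handled by applying Hunt interpolation to the standard Lebesgue-space fractional chain rule, together with the density argument used for the Leibniz rule in Section \ref{sb}. The critical threshold $\tfrac{2(d-2)}{d+2}$ in the statement is a direct reflection of the $\nabla u \cdot u^{4/(d-2)}$ structure of the nonlinearity and the requirement that every Lorentz index after splitting remain in the range accessible to Proposition \ref{spacetime bounds}.
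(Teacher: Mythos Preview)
Your proof is correct and follows essentially the same route as the paper: apply the nonlinear Lorentz--Strichartz estimate \eqref{sb/nonlinear Strichartz}, split via H\"older with the even Lorentz distribution $\theta_i = \phi_i = (*)\cdot\tfrac{d+2}{d-2}$, use nesting to reach Lorentz index $2$, and invoke Proposition \ref{spacetime bounds} together with Sobolev embedding. Your final paragraph overcomplicates matters slightly---the pointwise bound $|\nabla(|u|^\alpha u)| \lesssim |u|^\alpha|\nabla u|$ holds for every $\alpha>0$ since $|z|^\alpha z$ is $C^1$, so no fractional chain rule or interpolation is needed---but this does not affect correctness.
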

\begin{proof}
    We focus on the initial-value problem first before remarking on the needed changes for the final-state problem. 
    
    Applying the Strichartz estimate \eqref{sb/nonlinear Strichartz} and the nesting of Lorentz spaces, we find
    \begin{equation}\begin{split}\label{sb/nonlinearity 1}
        \bigg\|\nabla \int_{0}^t & e^{i(t-s)\Delta} \big[|u|^\frac{4}{d-2}u \big](s) ds\bigg\|_{L_t^{p,\theta}L_x^{q,\phi}(\R\times\R^d)} \\
        & \lesssim \Big\|\nabla |u|^{\frac{4}{d-2}}u \Big\|_{L_t^{\frac{p}{p-1},\theta}L_x^{\frac{q}{q-1},\phi}} \\
        & \lesssim \|\nabla u\|_{L_t^{p,\frac{\theta(d+2)}{d-2}}L_x^{q,\frac{\phi(d+2)}{d-2}}}\|u\|_{L_t^{\frac{4p}{(d-2)(p-2)},\frac{\theta(d+2)}{d-2}}L_x^{\frac{4q}{(d-2)(q-2)},\frac{\phi(d+2)}{d-2}}}^\frac{4}{d-2} \\
        & \lesssim \|\nabla u\|_{L_t^{p,2}L_x^{q,2}}\|u\|_{L_t^{\frac{4p}{(d-2)(p-2)},2}L_x^{\frac{4q}{(d-2)(q-2)},2}}^\frac{4}{d-2}.
    \end{split}\end{equation}
    A quick calculation shows that $\big(\frac{4p}{(d-2)(p-2)},\frac{4q}{(d-2)(q-2)}\big)$ is a non-endpoint Schr\"odinger-admissible pair with one spatial derivative. Proposition \ref{spacetime bounds} then concludes the proof of the proposition for the initial-value problem.

    To adapt the preceding proof to the final-state problem, the only modification needed is to change the interval of integration in \eqref{sb/nonlinearity 1} from $(0,t)$ to $(-\infty, t)$ for $u_-$ and from $(0,t)$ to $(t,\infty)$ for $u_+$.
\end{proof}

\section{Proof of Theorem \ref{theorem}}
    We decompose the proof of Theorem \ref{theorem} into the cases $2 < p < \frac{2d}{d-2}$ and $\frac{2d}{d-2} \leq p$. For $2 < p < \frac{2d}{d-2}$, we find that the linear dispersive decay \eqref{intro/linear dispersive decay} is integrable near $t = 0$. We therefore call $2 < p < \frac{2d}{d-2}$ the {\em integrable case} of Theorem \ref{theorem}. This integrability leads to a simplified argument which parallels the proof in \cite{mc-2024} for the mass-critical nonlinear Schr\"odinger equation. We present this proof in Section \ref{integrable}.
    
    For $p \geq \frac{2d}{d-2}$, the linear dispersive decay \eqref{intro/linear dispersive decay} is no longer integrable near $t = 0$ and so a more nuanced argument is needed. This will be completed in Section \ref{bp}.
    \subsection{Integrable case} \label{integrable}
        In this section, we prove Theorem \ref{theorem} in the case where $2 < p < \frac{2d}{d-2}$. 
This proof forms a template for the remaining cases of Theorems \ref{theorem}, \ref{edge case theorem}, \ref{scattering theorem}, and \ref{scattering edge case theorem}. In addition, the result in this section will be used in the proofs of Lemma \ref{d3/early time} and Lemma \ref{d4/early time}. 

This section focuses only on the initial-value problem \eqref{NLS}. The final-state problem is discussed in Section \ref{scattering} where we present the proof of Theorem \ref{scattering theorem} for $d = 3$, $p = 3$ specifically. As such, for an example of (essentially) the following proof with explicit numbers, we direct the reader to Section \ref{scattering}.
\begin{proof}[Proof of the integrable case of Theorem \ref{theorem}]
    It suffices to work with $t > 0$ as $t < 0$ will follow from time-reversal symmetry. By the density of Schwartz functions in $\dot{H}^1 \cap L^{p'}$, it suffices to consider Schwartz solutions of \eqref{NLS}.
    
    For $0 < T \leq \infty$, we define the norm
    \begin{equation*}
        \|u\|_{X(T)} = \sup_{t \in [0,T)} |t|^{d(\frac{1}{2} - \frac{1}{p})}\|u(t)\|_{L_x^p}.
    \end{equation*}
    It then suffices to show
    \begin{equation}\label{integrable/conclusion}
        \|u\|_{X(\infty)} \leq C(\|u_0\|_{\dot{H}^1})\|u_0\|_{L^{p'}},
    \end{equation}
    for which we proceed with a bootstrap argument.

    Let $\eta > 0$ denote a small parameter to be chosen later, depending only on universal constants. A quick calculation shows that 
    \begin{equation*}
        (q,r) = \big(\tfrac{8p}{(d-2)(2d - (d-2)p)},\tfrac{4p}{(p-2)(d-2)}\big)
    \end{equation*}
    is a non-endpoint Schr\"odinger-admissible pair with one spatial derivative. Proposition \ref{spacetime bounds} then implies that we may decompose $[0,\infty)$ into $J = J(\|u_0\|_{\dot{H}^1},\eta)$ many intervals $I_j = [T_{j-1},T_j)$ on which
    \begin{equation}\label{integrable/smallness}
        \|u\|_{L_s^{q,\frac{4}{d-2}} L_x^r(I_j)}
        < \eta.
    \end{equation}
    We note that this relies on $q = \frac{8p}{(d-2)(2d - (d-2)p)} < \infty$, which requires $p < \frac{2d}{d-2}$, and on $\frac{4}{d-2} \geq 2$, which requires $d \in \{3,4\}$.
    
    We aim to show that for all $j = 1,\dots, J$,
    \begin{equation}\label{integrable/bootstrap}
        \|u\|_{X(T_j)} \lesssim \|u_0\|_{L^{p'}} + C(\|u_0\|_{\dot{H^1}})\|u\|_{X(T_{j-1})} + \eta^\frac{4}{d-2} \|u\|_{X(T_j)}.
    \end{equation}
    Choosing $\eta > 0$ sufficiently small based on the constants in \eqref{integrable/bootstrap}, we could then iterate over $j = 1,\dots, J(\|u_0\|_{\dot{H}^1})$ to yield \eqref{integrable/conclusion} and conclude the proof of the integrable case of Theorem \ref{theorem}.

    We therefore focus on \eqref{integrable/bootstrap}. Fix $t \in [0,T_j)$ and recall the Duhamel formula:
    \begin{equation*}
        u(t) = e^{it\Delta} u_0 \mp i \int_0^{t} e^{i(t-s)\Delta}\big[|u|^\frac{4}{d-2}u\big](s)ds.
    \end{equation*}
    By the linear dispersive decay \eqref{intro/linear dispersive decay}, the contribution of the linear term to $\|u(t)\|_{X(T_j)}$ is immediately seen to be acceptable:
    \begin{equation}\label{integrable/linear}
        \big\|e^{it\Delta} u_0\big\|_{X(T_j)} \lesssim \|u_0\|_{L_x^{p'}}.
    \end{equation}
    We thus focus on the nonlinear correction.
    
    By the linear dispersive decay \eqref{intro/linear dispersive decay} and H\"older's inequality, we may estimate
    \begin{align*}
         \bigg\|\int_0^t e^{i(t-s)\Delta}\big[|u|^\frac{4}{d-2}u\big](s)ds\bigg\|_{L_x^p}
         \lesssim \int_0^t |t-s|^{-d(\frac{1}{2} - \frac{1}{p})}\|u(s)\|_{L^p} \big\|u(s)\big\|^\frac{4}{d-2}_{L_x^r}ds.
    \end{align*}
    By definition, $|s|^{d(\frac{1}{2}-\frac{1}{p})}\|u(s)\|_{L^p} \leq \|u\|_{X(s)}$. Then
    \begin{equation}\begin{split}\label{integrable/bootstrap introduction}
        \bigg\| \int_0^t e^{i(t-s)\Delta} & \big[|u|^\frac{4}{d-2}u\big](s)ds\bigg\|_{L_x^p} \\
        & \lesssim \int_0^t |t-s|^{-d(\frac{1}{2} - \frac{1}{p})}|s|^{-d(\frac{1}{2} - \frac{1}{p})}\|u\|_{X(s)}\|u(s)\|^\frac{4}{d-2}_{L_x^r}ds.
    \end{split}\end{equation}
    
    We decompose $ [0,t) $ into $[0,t/2)$ and $[t/2,t)$. For $s \in  [0,t/2) $, we note that $|t-s| \sim |t|$, and for $s \in [t/2,t)$, we note that $|s| \sim |t|$. Doing so, we find
    \begin{equation}\begin{split}\label{integrable/early-late}
        \bigg\|\int_0^t e^{i(t-s)\Delta} \big[|u|^\frac{4}{d-2}& u\big](s) ds\bigg\|_{L_x^p} \\
        & \lesssim |t|^{-d(\frac{1}{2} - \frac{1}{p})}\int_0^{t/2}|s|^{-d(\frac{1}{2} - \frac{1}{p})}\|u\|_{X(s)}\|u(s)\|^\frac{4}{d-2}_{L_x^r}ds\\
        &\hspace{11pt} + |t|^{-d(\frac{1}{2} - \frac{1}{p})}\int_{t/2}^t |t-s|^{-d(\frac{1}{2} - \frac{1}{p})}\|u\|_{X(s)}\|u(s)\|^\frac{4}{d-2}_{L_x^r}ds.
    \end{split}\end{equation}
    
    Because $2 < p < \frac{2d}{d-2}$, it follows that $|s|^{-d(\frac{1}{2} - \frac{1}{p})}$ and $|t-s|^{-d(\frac{1}{2} - \frac{1}{p})}$ both lie in $L_s^{\frac{2p}{d(p - 2)},\infty}$, see \eqref{Lorentz/identity}. H\"older's inequality then implies
    \begin{align*}
        \bigg\|& \int_0^t e^{i(t-s)\Delta} \big[|u|^\frac{4}{d-2}u\big](s)ds\bigg\|_{L_x^p} \lesssim |t|^{-d(\frac{1}{2} - \frac{1}{p})}\Big\|\|u\|_{X(s)}\|u(s)\|^\frac{4}{d-2}_{L_x^r}\Big\|_{L_s^{\frac{2p}{(2-d)p + 2d},1}\left( [0,t) \right)}.
    \end{align*}
    The preceding calculation relies on both that \eqref{intro/linear dispersive decay} is integrable near $t = 0$ and on the Lorentz improvements in Proposition \ref{sb/Lorentz-Strichartz estimates}.

    For $t \in [0,T_j)$, we now decompose $[0,t)$ into $[0,t) \cap[0,T_{j-1})$ and $[0,t) \cap I_j$. Doing so, \eqref{integrable/smallness} and Proposition \ref{spacetime bounds} then imply
    \begin{align*}
        \Big\|\|u\|_{X(s)}&\|u(s)\|^\frac{4}{d-2}_{L_x^r}\Big\|_{L_s^{\frac{2p}{(2-d)p + 2d},1}\left( [0,t) \right)}\\
        & \leq \|u\|_{X(T_{j-1})}\|u\|^\frac{4}{d-2}_{L_s^{q,\frac{4}{d-2}}L_x^r([0,T_{j-1}))}+ \|u\|_{X(T_j)}\|u\|^\frac{4}{d-2}_{L_s^{q,\frac{4}{d-2}}L_x^{r}(I_j)} \\
        & \leq C(\|u_0\|_{\dot{H}^1})\|u\|_{X(T_{j-1})} + \eta^\frac{4}{d-2}\|u\|_{X(T_j)}.
    \end{align*}
    Combining the two preceding estimates and taking the supremum over $t \in [0,T_j)$, we find that
    \begin{align*}
        \bigg\|\int_0^t e^{i(t-s)\Delta}\big[|u|^\frac{4}{d-2}u\big](s)ds\bigg\|_{X(T_j)}
        \lesssim C(\|u_0\|_{\dot{H}^1})\|u\|_{X(T_{j-1})} + \eta^\frac{4}{d-2}\|u\|_{X(T_j)}.
    \end{align*}
    Along with the linear term \eqref{integrable/linear}, this yields the bootstrap statement \eqref{integrable/bootstrap} and concludes the proof of the integrable case of Theorem \ref{theorem}.
\end{proof}

    \subsection{Non-integrable case}\label{bp}
        We now complete the proof of Theorem \ref{theorem} by considering $\frac{2d}{d-2} \leq p$. We follow the structure of the proof of the integrable case, with care now taken to avoid the non-integrability of the linear dispersive decay \eqref{intro/linear dispersive decay} near $t = 0$.

As in the integrable case, for each $t > 0$ we will decompose the integral over $[0,t)$ into an early-time interval $[0,t/2)$ and a late-time interval $[t/2,t)$. Unlike the integrable case, these intervals must now be treated separately.

We begin with the early-time interval, $[0,t/2)$. On this interval, we carefully apply the integrable case of Theorem \ref{theorem} to produce a factor of $\|u_0\|_{L^{p'}}$. In doing so, we avoid the bootstrap norm which would produce a non-integrable term $|s|^{-d(\frac{1}{2} - \frac{1}{p})}$; see \eqref{integrable/bootstrap introduction}. As this argument is independent of the bootstrap structure, we present this estimate in Lemma \ref{d3/early time} for $d = 3$ and in Lemma \ref{d4/early time} for $d = 4$.

We focus first on $d = 3$ as it is the simpler argument.
\begin{lemma}[Early-time interval, $d = 3$]\label{d3/early time}
    Fix $6 \leq p \leq \infty$. Suppose that $u_0 \in \dot{H}^1 \cap L^{p'}(\R^3)$ satisfies the hypotheses of Theorem \ref{well-posedness}. Then the corresponding solution $u(t)$ to \eqref{NLS} with initial data $u_0$ satisfies
    \begin{equation*}
        \bigg\|\int_0^{t/2} e^{i(t-s)\Delta}\big[|u|^4u\big](s)ds \bigg\|_{L^p} \leq C(\|u_0\|_{\dot{H}^1}) |t|^{-3(\frac{1}{2} - \frac{1}{p})}\|u_0\|_{L^{p'}}.
    \end{equation*}
\end{lemma}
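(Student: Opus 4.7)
The plan is to extract the temporal decay $|t|^{-3(1/2-1/p)}$ by using that $|t-s|\sim |t|$ on the early-time interval, then to control the resulting time integral by combining the already-proven integrable case of Theorem \ref{theorem} with the Lorentz-Strichartz estimates developed in Section \ref{sb}.

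First, I would apply Minkowski's inequality and the linear dispersive decay \eqref{intro/linear dispersive decay}: since $|t-s|\sim |t|$ on $s\in[0, t/2]$, this yields
\begin{equation*}
\bigg\|\int_0^{t/2} e^{i(t-s)\Delta}\big[|u|^4 u\big](s)\,ds\bigg\|_{L^p}\lesssim |t|^{-3(1/2-1/p)}\int_0^{t/2}\big\||u|^4 u(s)\big\|_{L^{p'}}\,ds.
\end{equation*}
The lemma thus reduces to showing that $\int_0^{t/2}\||u|^4 u(s)\|_{L^{p'}}\,ds\leq C(\|u_0\|_{\dot{H}^1})\|u_0\|_{L^{p'}}$ uniformly in $t > 0$.

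For the integrand I would H\"older in space as $\||u|^4 u\|_{L^{p'}}\leq \|u\|_{L^{q_1}}\|u\|_{L^6}^4$ with $q_1 = \frac{3p}{p-3}\in[3, 6]$: the $L^6$ factor is controlled uniformly by $C(\|u_0\|_{\dot{H}^1})$ via energy conservation and Sobolev embedding, and for $p \in (6, \infty]$ the exponent $q_1$ lies in the integrable-case range $(2, 6)$, so the integrable case yields $\|u(s)\|_{L^{q_1}}\lesssim |s|^{-3(1/2-1/q_1)}\|u_0\|_{L^{q_1'}}$. The exponent balance $\tfrac{1}{q_1}+\tfrac{1}{p}=\tfrac{1}{3}$ ensures that $3(1/2-1/q_1)<1$, making $|s|^{-3(1/2-1/q_1)}$ integrable near $s = 0$---this precisely avoids the non-integrable bootstrap-norm singularity $|s|^{-3(1/2-1/p)}$ indicated in the section introduction.

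The main obstacle is that this approach natively outputs $\|u_0\|_{L^{q_1'}}$ rather than the desired $\|u_0\|_{L^{p'}}$, and that the naive time integration produces a $|t|$-dependence which does not immediately match the target decay $|t|^{-3(1/2-1/p)}$ when $p > 6$. To close the estimate I would use the Lorentz-H\"older inequality in time, pairing $|s|^{-3(1/2-1/q_1)}\in L^{\frac{2q_1}{3(q_1-2)}, \infty}_s$ against a Lorentz-refined Strichartz norm on $u$ supplied by Proposition \ref{spacetime bounds} and Corollary \ref{sb/nonlinearity}; the Lorentz second-index refinements in these estimates provide the additional integrability needed to close the bound with $\|u_0\|_{L^{p'}}$ appearing linearly. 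The endpoint $p = 6$ (where $q_1 = 6$ lies outside the strict integrable range) would be treated separately by the cleaner direct computation in which $\|u\|_{L^6}^5\leq C(\|u_0\|_{\dot{H}^1})^5$ is uniformly bounded, and the remaining integral against $|s|^{-1}\in L^{1,\infty}_s$ is made finite with $\|u_0\|_{L^{p'}}$ extracted by Duhamel-expanding one factor of $u$ as $e^{is\Delta}u_0 + v$ and using $\|e^{is\Delta}u_0\|_{L^6}\lesssim |s|^{-1}\|u_0\|_{L^{6/5}}$.
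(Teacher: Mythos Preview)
Your opening move---extracting $|t|^{-3(1/2-1/p)}$ from $|t-s|\sim|t|$ on $[0,t/2)$---matches the paper. The gap is in the H\"older split you commit to afterward.

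With $\||u|^4u\|_{L^{p'}}\leq\|u\|_{L^{q_1}}\|u\|_{L^6}^4$ and all four $L^6$ factors controlled by the energy bound $\|u\|_{L^\infty_tL^6_x}\leq C(\|u_0\|_{\dot H^1})$, no time-integrable factor remains to pair against $|s|^{-3(1/2-1/q_1)}=|s|^{-(p+6)/(2p)}$. Direct integration gives $\int_0^{t/2}|s|^{-(p+6)/(2p)}\,ds\sim|t|^{(p-6)/(2p)}$, which grows for $p>6$; and the norm produced is $\|u_0\|_{L^{q_1'}}$, which interpolates to only $\|u_0\|_{L^{p'}}^\theta$ with $\theta<1$. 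Your proposed fix---Lorentz--H\"older in time against a Strichartz norm from Proposition~\ref{spacetime bounds}---cannot operate here: the four $L^6$ factors are already in $L^\infty_t$, and there is no non-endpoint admissible pair $(q,r)$ with $r=6$ (with or without one derivative) in $d=3$. Your $p=6$ endpoint treatment has the same defect: Duhamel-expanding one factor gives $|s|^{-1}\notin L^1_s$, paired against four $L^\infty_t$-bounded factors, so the integral still diverges.

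The paper avoids both defects by splitting differently: it places a \emph{non-integer} fraction $\alpha=\frac{5p-6}{3p}\in[\tfrac43,\tfrac53]$ of the five factors into $L^3_x$ (using the integrable case at $p=3$), with $\alpha$ chosen so that the interpolation $\|u_0\|_{L^{3/2}}^\alpha\lesssim\|u_0\|_{L^{p'}}\cdot C(\|u_0\|_{\dot H^1})$ lands exactly one power of $\|u_0\|_{L^{p'}}$, and the remaining $5-\alpha=\frac{10p+6}{3p}$ factors go into a \emph{genuine} non-endpoint Strichartz space $L^{\frac{4(5p+3)}{p+6}}_tL^{\frac{6(5p+3)}{4p-3}}_x$ with one derivative. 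That Strichartz factor supplies precisely the $L^{\frac{6p}{p+6},1}_s$ integrability needed (via Lorentz--H\"older) to absorb $|s|^{-\alpha/2}\in L^{\frac{6p}{5p-6},\infty}_s$ uniformly in $t$, and it works for all $6\leq p\leq\infty$ simultaneously without a separate endpoint argument.
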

\begin{proof}
    As before, we note that $|t-s| \sim |t|$ for $s \in [0,t/2)$. Then by the linear dispersive decay \eqref{intro/linear dispersive decay} and H\"older's inequality,
    \begin{align*}
        \bigg\|\int_0^{t/2} e^{i(t-s)\Delta}\big[|u|^4u\big](s)ds \bigg\|_{L^p}
        & \lesssim \int_0^{t/2} |t-s|^{-3(\frac{1}{2} - \frac{1}{p})}\big\|\big[|u|^4 u\big](s)\big\|_{L_x^{\frac{p}{p-1}}} ds \\
        & \lesssim |t|^{-3(\frac{1}{2} - \frac{1}{p})}\int_0^{t/2} \|u(s)\|_{L_x^3}^{\frac{5p-6}{3p}}\big\|u(s)\big\|^{\frac{10p + 6}{3p}}_{L_x^\frac{6(5p+3)}{4p-3}}ds
    \end{align*}
    
    By the integrable case of Theorem \ref{theorem}, H\"older's inequality and Sobolev embedding imply that
    \begin{align*}
        \|u(s)\|_{L_x^3} 
        & \leq C(\|u_0\|_{\dot{H}^1}) |s|^{-1/2} \|u_0\|_{L^{p'}}^{\frac{3p}{5p-6}}\|u_0\|_{L^6}^{\frac{2p-6}{5p-6}}\\
        & \leq C(\|u_0\|_{\dot{H}^1}) |s|^{-1/2} \|u_0\|_{L^{p'}}^{\frac{3p}{5p-6}}.
    \end{align*}
    Therefore,
    \begin{align*}
        \bigg\|\int_0^{t/2}& e^{i(t-s)\Delta}\big[|u|^4u\big](s)ds \bigg\|_{L^p} \\
        & \leq C(\|u_0\|_{\dot{H}^1})|t|^{-3(\frac{1}{2} - \frac{1}{p})}\|u_0\|_{L^{p'}}\int_0^{t/2} |s|^{-\frac{5p-6}{6p}}\big\|u(s)\big\|^{\frac{10p + 6}{3p}}_{L_x^\frac{6(5p+3)}{4p-3}}ds \\
        & \leq C(\|u_0\|_{\dot{H}^1})|t|^{-3(\frac{1}{2} - \frac{1}{p})}\|u_0\|_{L^{p'}}\big\|u\big\|^{\frac{10p + 6}{3p}}_{L_s^{\frac{4(5p+3)}{p+6},\frac{10p + 6}{3p}}L_x^\frac{6(5p+3)}{4p-3}}.
    \end{align*}
    Because $\big(\frac{4(5p+3)}{p+6},\frac{6(5p+3)}{4p-3}\big)$ is a non-endpoint Schr\"odinger-admissible pair with one spatial derivative and $\frac{10p + 6}{3p} \geq 2$, Proposition \ref{spacetime bounds} then concludes the proof of the lemma.
\end{proof}


We now turn our attention to the early-time interval $[0,t/2)$ in spatial dimension $d = 4$. Here, an additional challenge arises from the limited copies of $u$ present in the nonlinearity. This prevents the use of Proposition \ref{spacetime bounds} due to the requirement that the Lorentz exponent be above $2$; see \eqref{d4/early time/1} and the follow note.

To handle this, we decompose $u$ into its linear term and nonlinear correction:
\begin{equation}\label{d4/u decomposition}
    u(t) = e^{it\Delta} u_0 \mp i \int_0^t e^{i(t-s)\Delta}\big[|u|^2u \big](s) ds = e^{it\Delta}u_0 + u_{nl}.
\end{equation}
In light of Corollary \ref{sb/nonlinearity}, we have spacetime bounds for $u_{nl}$ down to Lorentz exponent $2/3$. This will allow us to adapt the proof of Lemma \ref{d3/early time} for the nonlinear correction $u_{nl}$.

We start with $e^{it\Delta}u_0$, which we address in the following lemma.
\begin{lemma}\label{d4/linear control}
    Fix $1 \leq q \leq 2$ and suppose that $f \in L^q \cap \dot{H}^1(\R^4)$. Then
    \begin{equation*}
        \|e^{it\Delta} f\|_{L_t^{3}L_x^{3q}} \lesssim \|f\|_{L^q}^{1/3} \|f\|_{\dot{H}^1}^{2/3}.
    \end{equation*}
\end{lemma}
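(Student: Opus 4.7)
The plan is to reduce to the endpoint Strichartz pair $(3,3)$---which is Schr\"odinger-admissible in $d=4$ because $\tfrac{2}{3}+\tfrac{4}{3}=2=\tfrac{d}{2}$---via a Sobolev embedding in space, and then close with a Gagliardo--Nirenberg interpolation. The correct number of derivatives is dictated by scaling: under $f(x)\mapsto f(\lambda x)$, the right-hand side $\|f\|_{L^q}^{1/3}\|f\|_{\dot{H}^1}^{2/3}$ scales like $\lambda^{-4/(3q)-2/3}$, which matches $\||\nabla|^s f\|_{L^2}$ precisely when
\[
 s \;=\; \tfrac{4(q-1)}{3q} \;\in\; [0,\tfrac{2}{3}] \qquad\text{for } q\in[1,2].
\]

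The first step pulls these $s$ derivatives out of the $L^{3q}_x$ norm. In $\R^4$, the Sobolev embedding gives $\|g\|_{L^{3q}} \lesssim \||\nabla|^s g\|_{L^3}$, since $\tfrac{1}{3q} = \tfrac{1}{3} - \tfrac{s}{4}$ for the choice above. Applying this pointwise in $t$, commuting $|\nabla|^s$ past $e^{it\Delta}$, and then invoking the standard $(3,3)$ Strichartz estimate in $d=4$, one obtains
\[
\|e^{it\Delta} f\|_{L^3_t L^{3q}_x} \;\lesssim\; \|e^{it\Delta}|\nabla|^s f\|_{L^3_{t,x}} \;\lesssim\; \||\nabla|^s f\|_{L^2}.
\]

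It then remains to establish the Gagliardo--Nirenberg bound $\||\nabla|^s f\|_{L^2} \lesssim \|f\|_{L^q}^{1/3}\|f\|_{\dot{H}^1}^{2/3}$. I would do this by Littlewood--Paley: for each dyadic $N$, combine the definition of $\dot{H}^1$ with Bernstein to obtain
\[
\|P_N f\|_{L^2} \;\lesssim\; \min\bigl(N^{-1}\|f\|_{\dot{H}^1},\; N^{4/q-2}\|f\|_{L^q}\bigr),
\]
the two bounds crossing over at $N_\star = (\|f\|_{\dot{H}^1}/\|f\|_{L^q})^{q/(4-q)}$. Summing $\sum_N N^{2s}\|P_N f\|_{L^2}^2$ with the smaller of the two bounds produces two geometric series---both convergent because $q\le 2 < 4$ forces the exponents on either side of $N_\star$ to have the correct strict sign---each controlled by its value at $N_\star$. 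A short computation using $(2s-2)q/(4-q) = -\tfrac{2}{3}$ then yields the claim. There is no real obstacle here: once $s$ is pinned down by scaling, the argument assembles classical Sobolev embedding, the dimension-four endpoint Strichartz, and a single frequency split. The endpoints $q=1$ (where $s=0$, so the Sobolev step is vacuous) and $q=2$ (where $3q=6$ sits exactly at $\dot{H}^{2/3}\hookrightarrow L^6$ in $\R^4$) are simply the two extreme instances of this same computation.
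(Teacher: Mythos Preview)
Your proof is correct and follows essentially the same route as the paper: both reduce the spatial exponent from $3q$ to $3$ via Bernstein/Sobolev at the cost of $s=\tfrac{4(q-1)}{3q}$ derivatives, apply the $(3,3)$ Strichartz estimate in $d=4$, and then split frequencies at the crossover between the $L^q$ and $\dot{H}^1$ bounds on $\|P_N f\|_{L^2}$ before optimizing. The only cosmetic difference is that the paper performs the Littlewood--Paley decomposition first and sums via the triangle inequality in $L^3_tL^{3q}_x$, whereas you first pass to $\||\nabla|^s f\|_{L^2}$ and then run the frequency split inside $\dot{H}^s$ (i.e., you package the second half as a Gagliardo--Nirenberg inequality); the underlying computation and the choice of crossover frequency are identical.
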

\begin{proof}
    Consider a Littlewood-Paley piece $f_N$ for some $N \in 2^\Z$. Because $(3,3)$ is a Schr\"odinger-admissible pair, Strichartz estimates and the Berntein inequalities \eqref{intro/bernstein} imply that
    \begin{align*}
        \|e^{it\Delta} f_N\|_{L_t^3 L_x^{3q}} & \lesssim N^{\frac{4q-4}{3q}}\|e^{it\Delta}f_N\|_{L_{t,x}^3}\lesssim N^{\frac{4q-4}{3q}}\|f_N\|_{L^2}.
    \end{align*}
    Using the Bernstein inequalities \eqref{intro/bernstein} again, we may estimate $e^{it\Delta} f_N$ in two ways:
    \begin{align}
        \label{d4/linear control/1}
        \|e^{it\Delta} f_N\|_{L_t^3 L_x^{3q}} & \lesssim N^{\frac{q-4}{3q}}\|\nabla f_N\|_{L^2}, \\ 
        \label{d4/linear control/2}
        \|e^{it\Delta} f_N\|_{L_t^3 L_x^{3q}} & \lesssim N^{\frac{2(4-q)}{3q}}\|f_N\|_{L^q}.
    \end{align}

    We now decompose $f$ into high and low frequencies based on a cutoff $M \in 2^\Z$ to be chosen later. We apply \eqref{d4/linear control/1} to the high frequencies and \eqref{d4/linear control/2} to the low frequencies. Countable subadditivity then implies that
    \begin{align*}
        \|e^{it\Delta} f\|_{L_t^3L_x^{3q}} 
        & \lesssim \sum_{N > M}N^{\frac{q-4}{3q}}\|f_N\|_{\dot{H}^1} + \sum_{N \leq M} N^{\frac{2(4-q)}{3q}}\|f_N\|_{L^q}\\
        & \lesssim  M^{\frac{-(4-q)}{3q}}\|f\|_{\dot{H}^1} + M^{\frac{2(4-q)}{3q}} \|f\|_{L^q}.
    \end{align*}
    Choosing $M^{\frac{4-q}{q}} \sim \|f\|_{\dot{H}^1}/\|f\|_{L^q}$ then concludes the proof of the lemma.
\end{proof}

We now address the early-time interval $[0,t/2)$ in spatial dimension $d = 4$ with the following lemma. Note that the following lemma is also applicable to $p = \infty$ in $d = 4$, which will be used to prove Theorem \ref{edge case theorem}; see \eqref{edge/II}.
\begin{lemma}[Early-time interval, $d=4$]\label{d4/early time}
    Fix $4 \leq p \leq \infty$. Suppose that $u_0 \in \dot{H}^1 \cap L^{p'}(\R^4)$ satisfies the hypotheses of Theorem \ref{well-posedness} and let $u(t)$ be the solution to \eqref{NLS} with initial data $u_0$. Then
    \begin{equation*}
        \bigg\|\int_0^{t/2} e^{i(t-s)\Delta}\big[|u|^2u\big](s)ds \bigg\|_{L^p} \leq C(\|u_0\|_{\dot{H}^1}) |t|^{-4(\frac{1}{2} - \frac{1}{p})} \|u_0\|_{L^{p'}}.
    \end{equation*}
\end{lemma}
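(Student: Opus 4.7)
The plan is to mirror the proof of Lemma \ref{d3/early time}, using the decomposition $u = u_L + u_{nl}$ from \eqref{d4/u decomposition} together with the new Lemma \ref{d4/linear control} and Corollary \ref{sb/nonlinearity} to accommodate the exponents forced on us by the low power of the nonlinearity in $d=4$. Since $|t - s| \sim |t|$ for $s \in [0, t/2)$, the linear dispersive decay \eqref{intro/linear dispersive decay} immediately produces the desired $|t|^{-4(\frac{1}{2}-\frac{1}{p})}$ factor, reducing matters to showing
\[
    \int_0^{t/2} \big\||u|^2 u(s)\big\|_{L^{p'}}\, ds \leq C(\|u_0\|_{\dot H^1}) \|u_0\|_{L^{p'}}.
\]

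Following Lemma \ref{d3/early time}, I would apply H\"older in space to write $\||u|^2 u\|_{L^{p'}} \lesssim \|u\|_{L^a}^\alpha \|u\|_{L^b}^{3-\alpha}$ for an ``integrable'' exponent $a \in (2, 4)$, a large exponent $b$, and an interpolation parameter $\alpha$. The factor $\|u(s)\|_{L^a}^\alpha$ would be controlled pointwise in time via the integrable case of Theorem \ref{theorem}, producing a time weight $|s|^{-4\alpha(\frac{1}{2}-\frac{1}{a})}$ and a spatial factor $\|u_0\|_{L^{a'}}^\alpha$ which can be interpolated between $\|u_0\|_{L^{p'}}$ and $\|u_0\|_{L^4} \lesssim \|u_0\|_{\dot H^1}$ (via Sobolev embedding) to yield the desired linear dependence on $\|u_0\|_{L^{p'}}$. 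The remaining factor $\|u\|_{L^b}^{3-\alpha}$ must then be controlled in Lorentz spacetime.

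Here the decomposition $u = u_L + u_{nl}$ becomes essential. The exponent $3 - \alpha$ falls below the Lorentz-inner-exponent threshold of $2$ needed to invoke Proposition \ref{spacetime bounds} directly on $u$, a reflection of the low power $\frac{4}{d-2} = 2$ of the nonlinearity in $d=4$. For the $u_L$ piece I would appeal to Lemma \ref{d4/linear control} (adjusting the outer exponent to fit the time H\"older), and for the $u_{nl}$ piece I would appeal to Corollary \ref{sb/nonlinearity}, whose Lorentz inner exponent can reach down to $\frac{2}{3}$ and thus accommodates the required $3 - \alpha$; Sobolev embedding then transfers admissible-pair bounds on $\nabla u_{nl}$ into the needed $L^b$ bounds on $u_{nl}$ for $b > 4$.

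The main obstacle is the simultaneous bookkeeping of $a$, $b$, $\alpha$, and the various Lorentz exponents so that the H\"older identities close, the pointwise integrable bound produces a (locally) Lorentz-integrable time weight near $s = 0$, the admissibility and Sobolev conditions on $b$ are satisfied, and the Lorentz inner exponent arising from Lorentz H\"older stays above $\frac{2}{3}$. This balancing is tightest for $p$ near $\infty$, where $\alpha$ grows toward $\frac{9}{5}$, the time weight $|s|^{-\sigma}$ becomes singular, and Corollary \ref{sb/nonlinearity}'s lowered Lorentz exponent is just enough to close the argument.
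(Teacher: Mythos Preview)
Your plan captures all the essential ingredients of the paper's proof --- the decomposition $u = e^{is\Delta}u_0 + u_{nl}$, the role of Lemma~\ref{d4/linear control} for the purely linear contribution, Corollary~\ref{sb/nonlinearity} for $u_{nl}$, and the integrable case to extract the $\|u_0\|_{L^{p'}}$ factor --- and your identification of the Lorentz-exponent obstruction is exactly right. The one organizational issue is the order in which you split versus take H\"older. If you H\"older first on the full $u$ and only afterward decompose the second factor as $\|u\|_{L^b}^{3-\alpha} \lesssim \|u_L\|_{L^b}^{3-\alpha} + \|u_{nl}\|_{L^b}^{3-\alpha}$, the resulting standalone term $\int_0^{t/2}|s|^{-\sigma}\|u_L(s)\|_{L^b}^{3-\alpha}\,ds$ does not close: the time weight $|s|^{-\sigma}\in L^{1/\sigma,\infty}$ forces a Lorentz inner exponent of $3-\alpha<2$ on $u_L$, which Proposition~\ref{sb/Lorentz-Strichartz estimates} does not provide, while invoking Lemma~\ref{d4/linear control} here introduces an extraneous $\|u_0\|_{L^q}$ factor you no longer want (the $\|u_0\|_{L^{p'}}$ budget has already been spent on the first H\"older factor).

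The paper avoids this by expanding the nonlinearity \emph{before} taking H\"older: write $|u|^2u$ as a sum of terms $\schem\big((e^{is\Delta}u_0)^{\alpha}u_{nl}^{\,3-\alpha}\big)$ for $\alpha=0,1,2,3$. The pure linear term ($\alpha=3$) is dispatched in one stroke by Lemma~\ref{d4/linear control} with $q=p'$, which itself produces the $\|u_0\|_{L^{p'}}$ factor. For the mixed terms ($\alpha\le 2$), the paper places $\beta=\tfrac{15p-20}{7p}$ copies --- drawn from \emph{both} $e^{is\Delta}u_0$ and $u_{nl}$ as available --- into $L_x^{5/2}$ and controls them pointwise in $s$ via dispersive decay (for $e^{is\Delta}u_0$) and the integrable case applied to $u_{nl}$ through~\eqref{d4/early time/unl}; this is where $\|u_0\|_{L^{p'}}$ emerges for those terms, via~\eqref{d4/early time/holder}. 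Crucially, the remaining $3-\beta$ copies still form a \emph{product}, so in the Lorentz H\"older one may assign inner exponent $\infty$ to any surviving $e^{is\Delta}u_0$ factor (via nesting from Proposition~\ref{sb/Lorentz-Strichartz estimates}) and push the low inner exponent entirely onto $u_{nl}$, where Corollary~\ref{sb/nonlinearity} absorbs it. Your plan becomes the paper's proof once you make this reordering.
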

\begin{proof}
    For $s \in [0,t/2)$, we note that $|t-s| \sim |t|$. The linear dispersive decay \eqref{intro/linear dispersive decay} then implies that
    \begin{align*}
        \bigg\|\int_0^{t/2} e^{i(t-s)\Delta}\big[|u|^2u\big](s)ds \bigg\|_{L^p} & \lesssim |t|^{-4(\frac{1}{2} - \frac{1}{p})} \int_0^{t/2} \big\|\big[|u|^2u\big](s)\big\|_{L^{p'}}ds.
    \end{align*}
    We now decompose $u = e^{it\Delta}u_0 + u_{nl}$ as in \eqref{d4/u decomposition}. Expanding the cubic nonlinearity and applying Lemma \ref{d4/linear control}, we may estimate
    \begin{equation}\begin{split}\label{d4/early time/0}
        &\bigg\|\int_0^{t/2}e^{i(t-s)\Delta}\big[|u|^2u\big](s)ds \bigg\|_{L^p} \\
        & \lesssim |t|^{-4(\frac{1}{2} - \frac{1}{p})} \bigg[ \big\|e^{is\Delta} u_0 \big\|^3_{L_s^3L_x^{3p'}} + \sum_{\alpha  = 0}^2 \int_0^{t/2} \Big\|(e^{is\Delta}u_0)^\alpha u_{nl}^{3-\alpha}\Big\|_{L^\frac{p}{p-1}}ds\bigg] \\
        & \leq C(\|u_0\|_{\dot{H}^1})|t|^{-4(\frac{1}{2} - \frac{1}{p})} \bigg[\|u_0\|_{L^{p'}} + \sum_{\alpha  = 0}^2 \int_0^{t/2} \Big\|(e^{is\Delta}u_0)^\alpha u_{nl}^{3-\alpha}\Big\|_{L^\frac{p}{p-1}}ds\bigg] \\
        & = C(\|u_0\|_{\dot{H}^1})|t|^{-4(\frac{1}{2} - \frac{1}{p})} \bigg[\|u_0\|_{L^{p'}} + \sum_{\alpha  = 0}^2 I_\alpha \bigg].
    \end{split}\end{equation}
    
    We consider the integral term $I_\alpha$ and proceed as in the proof of Lemma \ref{d3/early time}. For ease of notation, we define $\beta = \frac{15p-20}{7p}$. With this definition, H\"older's inequality and Sobolev embedding ensures that for $p \geq 4$,
    \begin{equation}\label{d4/early time/holder}
        \|u_0\|_{L_x^{5/3}}^\beta \lesssim \|u_0\|_{L_x^{p'}}\|u_0\|_{L_x^4}^{\frac{(8p-20)\beta}{15p-20}} \leq C(\|u_0\|_{\dot{H}^1}) \|u_0\|_{L_x^{p'}}.
    \end{equation}

    By H\"older's inequality, we estimate $I_\alpha$ as
    \begin{align*}
        I_\alpha & \lesssim \bigg\|\big\|e^{is\Delta} u_0\big\|_{L_x^{5/2}}^{\alpha \vee \beta}\big\|u_{nl}(s)\big\|^{\beta - \alpha \vee \beta}_{L_x^{5/2}}\Big\|(e^{is\Delta} u_0)^{\alpha - \alpha \vee \beta} u_{nl}^{3 - \alpha \wedge \beta}(s)\Big\|_{L_x^\frac{7p}{p+1}}\bigg\|_{L_s^1}.
    \end{align*}
    Applying the triangle inequality, the linear dispersive decay \eqref{intro/linear dispersive decay} and the integrable case of Theorem \ref{theorem} to $u_{nl}$, we find that
    \begin{equation}\label{d4/early time/unl}
        \|u_{nl}(s)\|_{L_x^{5/2}} \leq \|e^{is\Delta} u_0\|_{L_x^{5/2}} + \|u(s)\|_{L_x^{5/2}} \leq C\big(\|u_0\|_{\dot{H}^1}\big)|s|^{-2/5} \|u_0\|_{L^{5/3}}.
    \end{equation}
    Combined with the linear dispersive decay \eqref{intro/linear dispersive decay} and \eqref{d4/early time/holder}, this then implies that
    \begin{align*}
        I_\alpha & \lesssim \bigg\| |s|^{-2\beta/5}\|u_0\|^{\beta}_{L_x^{5/3}}\big\|(e^{is\Delta} u_0)^{\alpha - \alpha \vee \beta} u_{nl}^{3 - \alpha \wedge \beta}(s)\big\|_{L_x^\frac{7p}{p+1}}\bigg\|_{L_s^1} \\
        & \lesssim C(\|u_0\|_{\dot{H}^1}) \|u_0\|_{L_x^{p'}}\bigg\| |s|^{-\frac{6p-8}{7p}}\big\|(e^{is\Delta} u_0)^{\alpha - \alpha \vee \beta} u_{nl}^{3 - \alpha \wedge \beta}(s)\big\|_{L_x^\frac{7p}{p+1}}\bigg\|_{L_s^1}.
    \end{align*}
    As $0 < \frac{6p-8}{7p} < 1$ for all $p \geq 4$, we note that $|s|^{-\frac{6p-8}{7p}} \in L^{\frac{7p}{6p-8},\infty}$; see \eqref{Lorentz/identity}. H\"older's inequality then implies that
    \begin{align*}
        I_\alpha & \lesssim C(\|u_0\|_{\dot{H}^1}) \|u_0\|_{L_x^{p'}}\big\|(e^{is\Delta} u_0)^{\alpha - \alpha \vee \beta} u_{nl}^{3 - \alpha \wedge \beta}(s)\big\|_{L_s^{\frac{7p}{p+8},1}L_x^\frac{7p}{p+1}}.
    \end{align*}
    
    We note that $\alpha - \alpha \vee \beta + 3 - \alpha \wedge \beta = 3 - \beta$. By H\"older's inequality and direct calculation, we may then estimate
    \begin{align*}
        I_\alpha & \lesssim C(\|u_0\|_{\dot{H}^1}) \|u_0\|_{L_x^{p'}}\big\|e^{is\Delta} u_0 \big\|_{L_s^{\frac{7p(3-\beta)}{p+8},\infty}L_x^\frac{7p(3-\beta)}{p+1}}^{\alpha - \alpha \vee \beta} \big\|u_{nl}\big\|_{L_s^{\frac{7p(3-\beta)}{p+8},3 - \alpha \wedge \beta}L_x^\frac{7p(3-\beta)}{p+1}}^{3 - \alpha\wedge \beta}\\
        & = C(\|u_0\|_{\dot{H}^1}) \|u_0\|_{L_x^{p'}}\big\|e^{is\Delta} u_0 \big\|_{L_s^{\frac{6p+20}{p+8},\infty}L_x^\frac{6p+20}{p+1}}^{\alpha - \alpha \vee \beta} \big\|u_{nl}\big\|_{L_s^{\frac{6p+20}{p+8},3 - \alpha \wedge \beta}L_x^\frac{6p+20}{p+1}}^{3 - \alpha\wedge \beta}.
    \end{align*}
    A quick calculation verifies that $(\frac{6p+20}{p+8},\frac{6p+20}{p+1})$ is Schr\"odinger-admissible with one spatial derivative and that
    \begin{equation}\label{d4/early time/1}
        3-\alpha \wedge \beta \geq 3 - \beta = \tfrac{6p+20}{7p} \geq \tfrac{2}{3}
    \end{equation}
    for all $p \geq 4$. We note here that the case $\beta \geq \alpha$ requires the additional Lorentz control from Corollary \ref{sb/nonlinearity}.
    
    Corollary \ref{sb/nonlinearity} and Proposition \ref{sb/linear Strichartz} then imply that
    \begin{equation*}
        I_\alpha \leq C(\|u_0\|_{\dot{H}^1})\|u_0\|_{L^{p'}},
    \end{equation*}
    for all $\alpha = 0,1,2$. Along with \eqref{d4/early time/0}, this concludes the proof of the lemma.
\end{proof}

        We now turn our attention to the late-time interval $[t/2,t)$. On this interval, we employ a Sobolev embedding before applying the linear dispersive decay \eqref{intro/linear dispersive decay}; see \eqref{bp/Sobolev}. This decreases the Lebesgue exponent below the integrable threshold $\frac{2d}{d-2}$ in all cases except $p = \infty, d= 4$. Applying the linear dispersive decay \eqref{intro/linear dispersive decay} at this point yields an integrable term which we control as in the proof of the integrable case of Theorem \ref{theorem}.

In the excluded case $p = \infty$, $d = 4$, a Sobolev embedding with one derivative is insufficient to decrease the Lebesgue exponent below the integrable threshold. This can be fixed with a careful frequency decomposition, such in the proof of Theorem \ref{edge case theorem}, or with additional regularity, such as in Corollary \ref{edge/corollary}.

In order to consider the case $p = \infty$ when $d = 3$, we must address the failure of endpoint Sobolev embedding. To circumvent this, we establish the following lemma, which acts as a combination of Sobolev embedding and linear dispersive decay \eqref{intro/linear dispersive decay}.
\begin{lemma}\label{d3/Linfty Sobolev}
    For $f \in L^1 \cap \dot{H}^1(\R^3)$,
    \begin{equation*}
        \|e^{it\Delta} f\|_{L_x^\infty} \lesssim |t|^{-1/2} \||\nabla| f\|_{L_x^{3/2,1}}.
    \end{equation*}
\end{lemma}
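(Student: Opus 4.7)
My plan is to combine the Lorentz-refined dispersive estimate \eqref{sb/Lorentz dispersive} with an endpoint Sobolev-type embedding into $L^\infty$, exploiting the fact that the free propagator commutes with the Fourier multiplier $|\nabla|$.

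First, I would write
$$e^{it\Delta} f \;=\; |\nabla|^{-1}\bigl(e^{it\Delta}|\nabla|f\bigr)$$
and apply the Lorentz dispersive estimate \eqref{sb/Lorentz dispersive} in dimension $d=3$ with exponents $q=3$ and $\phi=1$ to $g := |\nabla|f$, obtaining
$$\bigl\|e^{it\Delta}|\nabla| f\bigr\|_{L_x^{3,1}} \;\lesssim\; |t|^{-1/2}\bigl\||\nabla| f\bigr\|_{L_x^{3/2,1}}.$$
It then suffices to establish the critical endpoint embedding
$$\bigl\||\nabla|^{-1}h\bigr\|_{L^\infty(\R^3)} \;\lesssim\; \|h\|_{L^{3,1}(\R^3)}.$$
This is the main obstacle, since it sits precisely on the endpoint $p=\infty$ that was excluded from the Sobolev embedding lemma stated earlier, and therefore must be justified by hand.

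To prove the endpoint embedding I would use the explicit Riesz representation $|\nabla|^{-1}h(x) = c_3\int_{\R^3}|x-y|^{-2} h(y)\,dy$, together with \eqref{Lorentz/identity}, which provides $|\cdot|^{-2}\in L^{3/2,\infty}(\R^3)$ with norm independent of translation. Pairing pointwise against $h\in L^{3,1}$ via the duality $(L^{3,1})^*=L^{3/2,\infty}$ (equivalently, the endpoint H\"older inequality in Lorentz spaces, cf.~\cite{grafakos}) gives
$$\bigl|\bigl(|\nabla|^{-1}h\bigr)(x)\bigr| \;\lesssim\; \bigl\||x-\cdot|^{-2}\bigr\|_{L^{3/2,\infty}}\,\|h\|_{L^{3,1}} \;\lesssim\; \|h\|_{L^{3,1}}$$
uniformly in $x$, which yields the claimed embedding. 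Chaining this with the Lorentz dispersive bound above produces the desired estimate on $\|e^{it\Delta}f\|_{L^\infty}$, completing the proof. As is standard, one may reduce to $f$ Schwartz by density to ensure all manipulations are justified.
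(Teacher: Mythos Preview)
Your argument is correct and is genuinely different from the paper's proof. You factor $e^{it\Delta}f=|\nabla|^{-1}e^{it\Delta}|\nabla|f$, apply the Lorentz dispersive estimate \eqref{sb/Lorentz dispersive} with $q=3$, $\phi=1$ to land in $L^{3,1}$, and then invoke the endpoint embedding $|\nabla|^{-1}:L^{3,1}(\R^3)\to L^\infty(\R^3)$, which you verify directly via the Riesz kernel $|x|^{-2}\in L^{3/2,\infty}$ and H\"older's inequality in Lorentz spaces. All of these steps are available from earlier in the paper, and the chain is valid; in fact your proof shows that the hypothesis $f\in L^1$ is not actually used in the estimate itself.

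By contrast, the paper argues more directly at the level of the kernel: it mollifies $f$ by a Gaussian $\psi_\epsilon$, writes $e^{it\Delta}(\psi_\epsilon*f)=\bigl(e^{it\Delta}|\nabla|^{-1}\psi_\epsilon\bigr)*|\nabla|f$, and then estimates the oscillatory integral $\int_{\R^3}e^{-it|\xi|^2+ix\cdot\xi-\epsilon|\xi|^2}|\xi|^{-1}d\xi$ by passing to spherical coordinates and applying Van der Corput's lemma, obtaining a pointwise bound $\lesssim |t|^{-1/2}|x|^{-1}$ and hence the $L^{3,\infty}_x$ control needed to pair against $|\nabla|f\in L^{3/2,1}$. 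Your route is shorter and more conceptual, leveraging \eqref{sb/Lorentz dispersive} and Lorentz duality rather than recomputing the kernel; the paper's route is more self-contained and makes the origin of the $|t|^{-1/2}$ gain visible through stationary phase.
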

\begin{proof}
    For $\epsilon > 0$, we take a Gaussian approximation to the identity,
    \begin{equation*}
        \widehat{\psi}_\epsilon(\xi) = e^{-\epsilon |\xi|^2}.
    \end{equation*}
    The linear dispersive decay \eqref{intro/linear dispersive decay} then implies that
    \begin{equation}\begin{split}\label{d3/Linfty Sobolev/1}
        \|e^{it\Delta} f\|_{L_x^\infty} \lesssim \|e^{it\Delta} [\psi_\epsilon*f]\|_{L_x^\infty} + |t|^{-3/2}\|\psi_\epsilon*f - f\|_{L_x^1}. 
    \end{split}\end{equation}
    Because $f \in L^1$, the second term will vanish as $\epsilon \to 0$. It thus suffices to consider the first term.
    
    For the first term, we set up an oscillatory integral. Applying the Young--O'Neil convolutional inequality, we find that
    \begin{equation}\begin{split}\label{d3/Linfty Sobolev/2}
        \|e^{it\Delta} [\psi_\epsilon*f]\|_{L_x^\infty} & \sim \|e^{it\Delta}|\nabla|^{-1}\psi_\epsilon*[|\nabla| f]\|_{L_x^\infty} \\
        & \lesssim \bigg\|\int_{\R^3} e^{-it|\xi|^2 + ix\cdot\xi - \epsilon |\xi|^2}|\xi|^{-1} d\xi\bigg\|_{L_x^{3,\infty}}\||\nabla| f\|_{L_x^{3/2,1}}.
    \end{split}\end{equation}
    Consider the integral term. Converting to spherical coordinates with $x$ taken as the zenith direction, it follows that
    \begin{align*}
        \int_{\R^3} e^{-it|\xi|^2 + ix\cdot\xi-\epsilon|\xi|^2} |\xi|^{-1} d\xi  & = \int_0^{2\pi}\int_{-1}^1\int_0^\infty e^{-it\rho^2 + i|x|\rho \cos \phi - \epsilon \rho^2}\rho \;d\rho \;d\cos\phi \;d \theta \\
        & = \frac{2\pi}{i|x|}\lim_{R \to \infty}\int_0^R\Big( e^{-it\left[\rho^2 + \frac{|x|}{t}\rho\right]} - e^{-it\left[\rho^2 -\frac{|x|}{t}\rho\right]}\Big)e^{-\epsilon \rho^2}d\rho.
    \end{align*}
    Here the limit representation is justified because the integrand is in $L^1$. 
    
    Consider the phase factors $\rho^2 \pm \frac{|x|}{t}\rho$. For all $\rho,t,$ and $|x|$, we find
    \begin{equation*}
        \bigg|\frac{\partial^2}{\partial \rho^2}\Big(\rho^2 \pm \frac{|x|}{t}\rho\Big)\bigg| \geq 1.
    \end{equation*}
    Applying Van der Corput's lemma, see \cite[Cor.\ 2.6.8]{grafakos}, we may then estimate
    \begin{align*}
        \bigg|\int_0^R\Big( e^{-it\left[\rho^2 + \frac{|x|}{t}\rho\right]} - e^{-it\left[\rho^2 -\frac{|x|}{t}\rho\right]}\Big)e^{-\epsilon \rho^2}d\rho\bigg| & \lesssim |t|^{-1/2}\Big(e^{-\epsilon R^2} + \big\|\partial_\rho e^{-\epsilon \rho^2}\big\|_{L_\rho^1} \Big) \\
         & \lesssim |t|^{-1/2},
    \end{align*}
    uniformly for $ R > 0$ and for $\epsilon > 0$ sufficiently small.
    
    Taking the limit as $R\to \infty$, we then find that
    \begin{equation}\begin{split}\label{d3/Linfty Sobolev/3}
        \bigg\|\int_{\R^3} e^{-it|\xi|^2 + ix\xi - \epsilon |\xi|^2}|\xi|^{-1} d\xi\bigg\|_{L_x^{3,\infty}} \lesssim \big\||x|^{-1}|t|^{-1/2}\big\|_{L_x^{3,\infty}(\R^3)} \lesssim |t|^{-1/2},
    \end{split}\end{equation}
    uniformly for $\epsilon > 0$ sufficiently small. Combining the estimates \eqref{d3/Linfty Sobolev/1}, \eqref{d3/Linfty Sobolev/2}, and \eqref{d3/Linfty Sobolev/3}, we may then take $\epsilon \to 0$ to conclude proof of the lemma.
\end{proof}
        We now complete the proof of Theorem \ref{theorem}.
\begin{proof}[Proof of Theorem \ref{theorem}]
    It remains to consider $\frac{2d}{d-2} < p < \infty$ and $p = \infty$ for $d = 3$.
    
    It suffices to work with $t > 0$ as $t < 0$ will follow from time-reversal symmetry. By the density of Schwartz functions in $\dot{H}^1 \cap L^{p'}$, it suffices to consider Schwartz solutions of \eqref{NLS}. 
    
    For $0 < T \leq \infty$, we define the norm
    \begin{equation*}
        \|u\|_{X(T)} = \sup_{t \in [0,T)} |t|^{d(\frac{1}{2} - \frac{1}{p})} \|u(t)\|_{L^p_x}.
    \end{equation*}
    It then suffices to show
    \begin{equation}\label{bp/conclusion}
        \|u\|_{X(\infty)} \leq C\big(\|u_0\|_{\dot{H}^1}\big)\|u_0\|_{L^{p'}},
    \end{equation}
    for which we proceed with a bootstrap argument.

    Fix a small parameter $\eta > 0$ to be chosen later based on absolute constants and $\|u_0\|_{\dot{H}^1}$. A quick calculation shows that
    \begin{align*}
        (q,r) = \big(\tfrac{8p}{(d-2)[p(4-d) + d]}, \tfrac{4pd}{(d-2)(pd - 2p - d) + 4p}\big)
    \end{align*}
    is a non-endpoint Schr\"odinger-admissible pair for $p,d$ as in Theorem \ref{theorem}. 
    By Proposition \ref{spacetime bounds}, we may then decompose $[0,\infty)$ into $J = J(\eta, \|u_0\|_{\dot{H}^1})$ many intervals $I_j = [T_{j-1},T_j)$ on which
    \begin{equation}\label{bp/smallness}
        \|\nabla u\|_{L_t^{q,2}L_x^{r,2}(I_j)} < \eta.
    \end{equation}
    
    We aim to show that for all $ j = 1,\dots, J$,
    \begin{equation}\label{bp/bootstrap}
        \|u\|_{X(T_j)} \lesssim C(\|u_0\|_{\dot{H}^1})\Big(\|u_0\|_{L^{p'}} + \|u\|_{X(T_{j-1})} + \eta^\frac{4}{d-2} \|u\|_{X(T_j)}\Big).
    \end{equation}
    Taking $\eta$ sufficiently small relative to the constants in \eqref{bp/bootstrap}, we could then iterate over $j=1,\dots,J(\|u_0\|_{\dot{H}^1})$ to yield \eqref{bp/conclusion} and complete the proof of Theorem \ref{theorem}.

    We therefore focus on \eqref{bp/bootstrap}. Combining the linear dispersive decay \eqref{intro/linear dispersive decay} with Lemmas \ref{d3/early time} and \ref{d4/early time}, the Duhamel formula implies that for all $j$,
    \begin{equation}\label{bp/duhamel}
        \|u\|_{X(T_j)} \lesssim C(\|u_0\|_{\dot{H}^1})\|u_0\|_{L^{p'}} + \bigg\|\int_{t/2}^t e^{i(t-s)\Delta}\big[|u|^2u\big](s)ds \bigg\|_{X(T_j)}.
    \end{equation}
    It thus remains to consider the late-time interval $[t/2,t)$.

    Consider first $\frac{2d}{d-2} \leq p < \infty$. We take a Sobolev embedding and apply the linear dispersive decay \eqref{intro/linear dispersive decay} to find that
    \begin{equation}\begin{split}\label{bp/Sobolev}
        \bigg\|\int_{t/2}^t e^{i(t-s)\Delta}\big[&|u|^\frac{4}{d-2}u\big](s)ds \bigg\|_{L^p} \\
        & \lesssim \int_{t/2}^t \Big\|e^{i(t-s)\Delta}|\nabla|^{1-\frac{d}{2p}} \big[|u|^\frac{4}{d-2}u\big](s) \Big\|_{L^{\frac{2pd}{2p+d}}_x} ds \\
        & \lesssim \int_{t/2}^t |t-s|^{\frac{d + 2p - pd}{2p}}\Big\||\nabla|^{1-\frac{d}{2p}} \big[|u|^\frac{4}{d-2}u\big](s) \Big\|_{L^{\frac{2pd}{2pd-2p-d}}_x} ds.
    \end{split}\end{equation}
    H\"older's inequality and Sobolev embedding then imply that
    \begin{align*}
        \bigg\|\int_{t/2}^t e^{i(t-s)\Delta}\big[|u|^\frac{4}{d-2}u\big](s)ds \bigg\|_{L^p} 
        & \lesssim \int_{t/2}^t |t-s|^{\frac{d + 2p - pd}{2p}}\|u(s)\|_{L_x^p}\|\nabla u(s)\|^\frac{4}{d-2}_{L_x^{r}} ds.
    \end{align*} 

    For $p = \infty$ when $d = 3$, we use Lemma \ref{d3/Linfty Sobolev} in place of Sobolev embedding. 
    Applying Lemma \ref{d3/Linfty Sobolev} and standard estimates, we then find similarly that
    \begin{align*}
        \bigg\|\int_{t/2}^t e^{i(t-s)\Delta}\big[|u|^4u\big](s)ds & \bigg\|_{L_x^\infty}
        \lesssim \int_{t/2}^t |t-s|^{-1/2}\Big\|\nabla\big[|u|^4u\big](s)\Big\|_{L_x^{3/2,1}} ds\\
        & \lesssim \int_{t/2}^t |t-s|^{-1/2}\|u(s)\|_{L_x^\infty} \|\nabla u(s)\|^4_{L_x^{12/5,4}} ds.
    \end{align*}
    
    In either case, the nesting of Lorentz spaces implies that we may estimate
    \begin{align*}
        \bigg\|\int_{t/2}^t e^{i(t-s)\Delta}\big[|u|^\frac{4}{d-2}u\big](s)ds \bigg\|_{L^p} 
        & \lesssim \int_{t/2}^t |t-s|^{\frac{d + 2p - pd}{2p}}\|u(s)\|_{L_x^p}\|\nabla u(s)\|^\frac{4}{d-2}_{L_x^{r,2}} ds.
    \end{align*} 
    We note that $|s| \sim |t|$ for $s \in [t/2, t)$. H\"older's inequality then implies that
    \begin{align}
        \bigg\|\int_{t/2}^t e^{i(t-s)\Delta}&\big[|u|^\frac{4}{d-2}u\big](s)ds \bigg\|_{L^p} \nonumber \\
        & \lesssim |t|^{-d(\frac{1}{2} - \frac{1}{p})}\int_{t/2}^t |t-s|^{\frac{d + 2p - pd}{2p}}\|u\|_{X(s)}\|\nabla u(s)\|^\frac{4}{d-2}_{L_x^{r,2}} ds. \label{d4/failure 1}\\
        & \sim |t|^{-d(\frac{1}{2} - \frac{1}{p})}\Big\|\|u(s)\|_{X(s)} \|\nabla u(s)\|^\frac{4}{d-2}_{L^{r,2}_x}\Big\|_{L_s^{\frac{2p}{p(d-4)+d},1}([0,t))}.  \label{d4/failure 2}
    \end{align}
    We note that for $d = 4$, this requires $p < \infty$ as otherwise $L_s^{\frac{2p}{p(d-4)+d},1}$ is trivial.
    
    We now take the supremum over $t \in [0,T_j)$ and decompose $[t/2,t)$ into $[t/2,t) \cap I_j$ and $[t/2,t) \cap [0,T_{j-1})$. As $(q,r)$ is a non-endpoint Schr\"odinger-admissible pair and $d \in \{3,4\}$, Proposition \ref{spacetime bounds} and \eqref{bp/smallness} then imply
    \begin{equation}\begin{split}\label{bp/1}
        \bigg\|\int_{t/2}^t e^{i(t-s)\Delta} & \big[|u|^\frac{4}{d-2}u\big](s)ds \bigg\|_{X(T_j)} \\
        & \lesssim \|u\|_{X(T_{j-1})}\|\nabla u\|^\frac{4}{d-2}_{L_t^{q,\frac{4}{d-2}}L_x^{r,2}} + \|u\|_{X(T_j)}\|\nabla u\|^\frac{4}{d-2}_{L_t^{q,\frac{4}{d-2}}L_x^{r,2}(I_j)} \\
        & \lesssim C(\|u_0\|_{\dot{H}^1}) \bigg(\|u\|_{X(T_{j-1})} + \eta^\frac{4}{d-2} \|u\|_{X(T_j)}\bigg).
    \end{split}\end{equation}
    
    Combining estimates \eqref{bp/duhamel} and \eqref{bp/1} then yields the bootstrap statement \eqref{bp/bootstrap}. With earlier considerations, taking $\eta$ sufficiently small and iterating over $j = 1, \dots, J(\|u_0\|_{\dot{H}^1})$ then concludes the proof of Theorem \ref{theorem}.
\end{proof}

\section{Besov theory}\label{besov}
    For the proof of Theorem \ref{edge case theorem}, it will be necessary to decompose our solutions into Littlewood-Paley pieces and employ the use of Besov spaces. Here we recall the definition of homogeneous Besov spaces and briefly develop some Besov theory for \eqref{NLS}. For a more general treatment of nonlinear Schr\"odinger equations in Besov spaces, see \cite{besov-strichartz} and references therein. For textbook treatments of Besov spaces, we direct the interested reader to \cite{grafakos-modern,triebel}, though we caution that many conventions exist for the notation.
\begin{definition}
    Fix $d \geq 1$, $1 \leq p,q \leq \infty$, and $s \in \R$. The homogeneous Besov space $\dot{B}^s_{p,q}$ is the completion of the Schwartz functions $\mathcal{S}(\R)$ with respect to the norm
    \begin{equation*}
        \|f\|_{\dot{B}^s_{p,q}} = \big\| N^s\|f_N(x)\|_{L^p_x(\R^d)}\big\|_{\ell_N^q(2^\Z)}.
    \end{equation*}
    We note that Bernstein's inequality \eqref{intro/bernstein} implies that the factor $N^s$ acts as $|\nabla|^s$.
\end{definition}

Throughout our analysis, it will be crucial to employ a simple paraproduct decomposition for the cubic nonlinearity of \eqref{NLS} in $d = 4$. This will allow us to restrict attention to terms which feature a high frequency. We recall such a decomposition in the following lemma.
\begin{lemma}[Paraproduct decomposition]\label{besov/paraproduct}
    Suppose that $f^1,f^2,f^3 \in L^2$. Then $P_N(f^1f^2f^3)$ can be expressed as
    \begin{equation*}\begin{split}\label{besov/paraproduct general}
        P_N(f^1f^2f^3) 
        & = P_N\bigg(f^1_{\geq N/8} f^2 f^3+ f^1_{< N/8} f^2_{\geq N/8} f^3 + f^1_{< N/8} f^2_{< N/8} f^3_{\geq N/8}\bigg) \\
        & = P_N \sum_{N_1 \gtrsim N}\sum_{(i,j,k)}  \schem(f^i_{N_1} f^j f^k),
    \end{split}\end{equation*}
    where $(i,j,k)$ is summed over all permutations of $(1,2,3)$ and $\schem$ is defined in \eqref{notation/schem}.
    In particular,
    \begin{equation*}\begin{split}\label{besov/paraproduct simple}
        |P_N(f^1f^2f^3)| 
        & \lesssim \sum_{(i,j,k)}\sum_{N_1 \gtrsim N}M\big[f^i_{N_1}\cdot Mf^j \cdot Mf^k\big],
    \end{split}\end{equation*}
    uniformly for $N \in 2^\Z$ where $M$ is the Hardy--Littlewood maximal function.
\end{lemma}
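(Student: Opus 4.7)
The plan is to establish the first equality by a telescoping frequency decomposition whose one excluded summand vanishes by Fourier support, then to read off the second equality by resolving each high-frequency factor into Littlewood--Paley pieces, and finally to derive the pointwise estimate from the standard maximal function control of Littlewood--Paley projections.

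To prove the first equality, I would decompose $f^i = f^i_{<N/8} + f^i_{\geq N/8}$ for each $i \in \{1,2,3\}$, expand the triple product into eight summands, and partition them according to the leftmost index at which a high-frequency piece appears. Seven of the eight summands reassemble exactly as the three displayed terms, and the lone remaining summand is $f^1_{<N/8} f^2_{<N/8} f^3_{<N/8}$. Its spatial Fourier support lies in $\{|\xi| \lesssim 3N/8\}$, which is disjoint from the support of the multiplier defining $P_N$ (essentially $\{N/2 \leq |\xi| \leq 2N\}$); hence the outer $P_N$ annihilates it.

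For the second equality, I would then further expand each high-frequency factor as $f^i_{\geq N/8} = \sum_{N_1 \gtrsim N} f^i_{N_1}$. Each of the three terms from the previous step becomes a sum over $N_1 \gtrsim N$ in which exactly one factor is a Littlewood--Paley piece at scale $N_1$, while the two remaining factors are either the original $f^j$ or a low-frequency truncation of it. The $\schem$ notation absorbs these low-frequency truncations (being Littlewood--Paley projections), and the sum over permutations $(i,j,k)$ of $(1,2,3)$ accommodates the three possible positions of the high-frequency factor.

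Finally, for the pointwise bound I would invoke the estimate $|P_M g| \lesssim Mg$ recalled in the Notation section, applied both to the outer $P_N$ and to any Littlewood--Paley projection concealed inside a $\schem$ factor. The two ``auxiliary'' factors $f^j$ and $f^k$ are thereby replaced by $Mf^j$ and $Mf^k$, while the high-frequency piece $f^i_{N_1}$ is left untouched, yielding the claimed bound. Since every step is a pointwise comparison, no genuine analytic obstacle arises; the only point requiring vigilance is the numerology of the first step, namely that the threshold $N/8$ is chosen small enough that $3N/8 < N/2$ and so the fully low-frequency remainder is genuinely killed by $P_N$.
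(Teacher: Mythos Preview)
Your argument is correct. The paper states this lemma without proof, treating it as a standard paraproduct decomposition; your telescoping expansion, Fourier-support observation for the all-low remainder, and maximal-function bound are exactly the intended justification.
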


In a similar vein to Proposition \ref{spacetime bounds}, we may establish mixed Besov-type spacetime bounds for solutions to \eqref{NLS} with a initial data in $\dot{B}^1_{2,1}$.
\begin{proposition}[Besov spacetime bounds]\label{besov/ell1}
    Let $(p,q)$ be a Schr\"odinger-admissible pair for $d = 4$. Suppose that $u_0 \in \dot{B}^1_{2,1}(\R^4) \subset \dot{H}^1(\R^4)$ satisfies the hypotheses of Theorem \ref{well-posedness}. Then the corresponding global solution $u(t)$ to \eqref{NLS} with initial data $u_0$ satisfies
    \begin{equation*}
        \sum_{N \in 2^\Z} N \|u_N\|_{L_t^p L_x^q} \leq C\big(\|u_0\|_{\dot{B}^1_{2,1}}\big).
    \end{equation*}
    The same estimates hold for the final-state problem with $u_0$ replaced by $u_\pm$.
\end{proposition}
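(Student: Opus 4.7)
The plan is to run a Strichartz-based bootstrap on the Littlewood--Paley pieces $u_N$, using the paraproduct decomposition of Lemma \ref{besov/paraproduct} to ensure that after summation in $N$ we reconstitute exactly the quantity $\sum_{N_1} N_1 \|u_{N_1}\|_{L_t^p L_x^q}$ we are trying to bound. Concretely, for any subinterval $I = [t_0, t_1] \subset \R$, apply $P_N$ to Duhamel's formula and combine the Strichartz estimate with the pair $(p,q)$ on both sides, Lemma \ref{besov/paraproduct}, and H\"older's inequality to get
\begin{equation*}
  \|u_N\|_{L_t^p L_x^q(I)} \lesssim \|u_N(t_0)\|_{L^2} + \sum_{N_1 \gtrsim N} \|u_{N_1}\|_{L_t^p L_x^q(I)} \|u\|^2_{L_t^{p_2} L_x^{q_2}(I)},
\end{equation*}
where $(p_2, q_2) = \big(\tfrac{2p}{p-2}, \tfrac{2q}{q-2}\big)$ is forced by matching H\"older exponents against $L_t^{p'} L_x^{q'}$. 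A direct calculation shows $(p_2, q_2)$ is Schr\"odinger-admissible with one spatial derivative in $d = 4$; Sobolev embedding in space then converts it to a non-endpoint admissible pair with no derivatives for $\nabla u$, whose spacetime norm is globally finite by Proposition \ref{spacetime bounds}.

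Now multiply the displayed bound by $N$ and sum over dyadic $N$. The constraint $N_1 \gtrsim N$ allows the order of summation to be swapped, producing $\sum_{N \leq 8 N_1} N \sim N_1$, and hence
\begin{equation*}
  \sum_N N \|u_N\|_{L_t^p L_x^q(I)} \lesssim \|u(t_0)\|_{\dot{B}^1_{2,1}} + \|u\|^2_{L^{p_2} L^{q_2}(I)} \sum_{N_1} N_1 \|u_{N_1}\|_{L_t^p L_x^q(I)}.
\end{equation*}
Using the global finiteness noted above, partition $\R$ into $J = J(\|u_0\|_{\dot{H}^1})$ subintervals $I_j = [T_{j-1}, T_j)$ on which $\|u\|_{L^{p_2} L^{q_2}(I_j)} < \eta$, for an absolute constant $\eta$ chosen small enough that the last term absorbs into the left side. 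This yields the per-interval estimate
\begin{equation*}
  \sum_N N \|u_N\|_{L_t^p L_x^q(I_j)} \lesssim \|u(T_{j-1})\|_{\dot{B}^1_{2,1}}.
\end{equation*}

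To close the argument, iterate across intervals. An entirely parallel computation (applying $P_N$ to Duhamel on $I_j$, then taking $L^2$ via dual Strichartz against $(p,q)$, then summing in $N$) gives $\|u(T_j)\|_{\dot{B}^1_{2,1}} \lesssim (1 + C\eta^2) \|u(T_{j-1})\|_{\dot{B}^1_{2,1}}$. Iterating from $j = 0$ to $j = J$ bounds the Besov norm at every $T_j$ by $C(\|u_0\|_{\dot{H}^1}) \|u_0\|_{\dot{B}^1_{2,1}}$, and summing the per-interval estimates completes the proof of the initial-value case. The main technical point is the paraproduct swap of sums: this is what allows the weighted $\ell^1_N$ summation to interact compatibly with the cubic nonlinearity, since without localizing the high frequency to a single factor every Littlewood--Paley piece of the nonlinearity would couple to every piece of $u$ and the $\ell^1_N$ structure could not be preserved. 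The final-state versions follow by merely changing the interval of integration in Duhamel's formula to $(-\infty, t)$ or $(t, \infty)$.
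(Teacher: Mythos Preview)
Your argument follows the same template as the paper's: Strichartz on Littlewood--Paley pieces, the paraproduct Lemma~\ref{besov/paraproduct} to force a high-frequency factor, sum in $N$ with the swap $\sum_{N\lesssim N_1}N\sim N_1$, then bootstrap on small intervals. Two points deserve attention.

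First, your choice to use the \emph{same} pair $(p,q)$ on both sides of Strichartz breaks down at the endpoint pairs $(2,4)$ and $(\infty,2)$, where $(p_2,q_2)=\big(\tfrac{2p}{p-2},\tfrac{2q}{q-2}\big)$ degenerates and your appeal to Proposition~\ref{spacetime bounds} (which is stated only for non-endpoint pairs) fails. The paper sidesteps this by first running the bootstrap only for $(p,q)=(3,3)$, and then for general $(p,q)$ placing the nonlinearity in $L_{t,x}^{3/2}$ --- that is, using $(\tilde p,\tilde q)=(3,3)$ on the dual side of Strichartz regardless of the left-hand pair. This decouples the right-hand side from $(p,q)$ and makes the endpoint cases, in particular $(\infty,2)$ which is exactly what Section~\ref{edge} relies on, fall out uniformly.

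Second, you never justify that $\sum_N N\|u_N\|_{L_t^pL_x^q(I_j)}$ is finite \emph{a priori}, without which the absorption step $A\lesssim B+\eta^2 A\Rightarrow A\lesssim B$ is vacuous. The paper handles this by passing to $u_0\in H^2$ via density and invoking persistence of regularity.
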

\begin{proof}
    We focus on the initial-value problem first before remarking on the needed changes for the final-state problem. We first consider $p = q = 3$ before generalizing to all Schr\"odinger-admissible pairs $p,q$. 
    
    As $\dot{B}^1_{2,1} \hookrightarrow H^2$, it suffices to consider $u_0 \in H^2$ so that persistence of regularity (see \cite{focusing-d4,defocusing-d4}) implies that all mixed Besov norms considered are finite.

    We proceed via a bootstrap argument and introduce a small parameter $\eta > 0$ to be chosen later based on absolute constants. By Theorem \ref{spacetime bounds}, we may decompose $\R$ into $J = J(\eta, \|u_0\|_{\dot{B}^1_{2,1}})$ many intervals $I_j$ on which 
    \begin{equation}\label{besov/bounds/smallness}
        \|u\|_{L_{t,x}^6(I_j \times \R^4)} < \eta.
    \end{equation}
    
    By the Duhamel formula and Strichartz estimates, for each spacetime slab $I_j \times \R^4$ and any $t_j \in I_j$, we may estimate
    \begin{align*}
        \sum_{N \in 2^\Z} N \|u_N\|_{L^3_{t,x}(I_j)} & \lesssim \|u(t_j)\|_{\dot{B}^1_{2,1}} + \sum_{N \in 2^\Z} N \big\|(|u|^2 u)_N  \big\|_{L_{t,x}^{3/2}(I_j)}.
    \end{align*}
    For the second term, we apply the paraproduct decomposition, Lemma \ref{besov/paraproduct}. H\"older's inequality and the boundedness of the Hardy-Littlewood maximal function then imply that
    \begin{align*}
        \sum_{N \in 2^\Z} N \|u_N\|_{L^3_{t,x}(I_j)} & \lesssim \|u(t_j)\|_{\dot{B}^1_{2,1}} + \sum_{N_1 \gtrsim N} N \|u_{N_1} (Mu)^2\|_{L^{3/2}_{t,x}(I_j)} \\
        & \lesssim \|u(t_j)\|_{\dot{B}^1_{2,1}} +  \|u\|^2_{L^6_{t,x}(I_j)}\sum_{N_1 \gtrsim N} N \|u_{N_1}\|_{L^3_{t,x}(I_j)}.
    \end{align*}
    
    Summing over $N$ first, \eqref{besov/bounds/smallness} then implies that
    \begin{equation}\begin{split}\label{besov/1}
        \sum_{N \in 2^\Z} N \|u_N\|_{L^3_{t,x}(I_j)} 
        & \lesssim \|u(t_j)\|_{\dot{B}^1_{2,1}} +  \eta^2 \sum_{N_1} N_1 \|u_{N_1}\|_{L^3_{t,x}(I_j)}.
    \end{split}\end{equation}
    Choosing $\eta$ sufficiently small relative to the constants in \eqref{besov/1}, a standard bootstrap argument yields
    \begin{equation*}
        \sum_{N\in 2^\Z} N \|u_N\|_{L_{t,x}^3(I_j)} \leq C\big(\|u_0\|_{\dot{B}^1_{2,1}}\big)\|u(t_j)\|_{\dot{B}^1_{2,1}}.
    \end{equation*}
    
    By the Duhamel formula and Strichartz estimates, for any Schr\"odinger admissible pair $(p,q)$ we find similarly that
    \begin{align*}
        \sum_{N \in 2^\Z} N \|u_N\|_{L_t^p L_x^q(I_j)} & \lesssim \|u(t_j)\|_{\dot{B}^1_{2,1}} +  \|u\|^2_{L^6_{t,x}} \sum_{N_1} N_1 \|u_{N_1}\|_{L^3_{t,x}} \\
        & \leq C\big(\|u_0\|_{\dot{B}^1_{2,1}}\big)\|u(t_j)\|_{\dot{B}^1_{2,1}}.
    \end{align*}
    By the $p = \infty, q = 2$ estimate, an iterative argument with appropriately chosen $t_j$ then implies that
    \begin{align*}
        \sum_{N \in 2^\Z} N \|u_N\|_{L_t^p L_x^q(I_j)} & \leq C\big(\|u_0\|_{\dot{B}^1_{2,1}}, j\big).
    \end{align*}
    Summing over $j = 1, \dots, J(\|u_0\|_{\dot{B}^1_{2,1}})$, this concludes the proof of the proposition for the initial-value problem. 
    
    To adapt the preceding proof to the final-state problem, it suffices to change every instance of $u_0$ to $u_\pm$ and one instance of $u(t_j)$ to $u_\pm$.
\end{proof}

In addition to spacetime bounds, we will require a stability result for initial data in $\dot{B}^1_{2,1}(\R^4)$. This proof parallels the usual proof of stability in $\dot{H}^1$, only with a paraproduct decomposition used to understand the nonlinearity of \eqref{NLS}.
\begin{proposition}[$\dot{B}^1_{2,1}$ stability]\label{besov/stability}
    Fix $d = 4$ and let $p,q$ be a Schr\"odinger-admissible pair. Suppose that $u_0, v_0$ satisfy the hypotheses of Theorem \ref{well-posedness} and obey the bound $\|u_0\|_{\dot{B}^1_{2,1}}, \|v_0\|_{\dot{B}^1_{2,1}} \leq R$. Let $u(t), v(t)$ be the corresponding solutions to \eqref{NLS} with initial data $u_0,v_0$ respectively. Then
    \begin{equation*}
        \sum_{N \in 2^\Z}\|\nabla (u_N - v_N)\|_{L_t^p L_x^q} \leq C(R) \|u_0 - v_0\|_{\dot{B}^1_{2,1}}.
    \end{equation*}
    The same estimates hold for the final-state problem with $u_0,v_0$ replaced by $u_\pm, v_\pm$. 
\end{proposition}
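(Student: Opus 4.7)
The plan is to mimic the standard $\dot{H}^1$-stability scheme, combining Strichartz on each Littlewood--Paley piece with the paraproduct decomposition of Lemma \ref{besov/paraproduct} to handle the Besov summation. Setting $w = u - v$, the difference solves $iw_t + \Delta w = \pm F$ with $F = |u|^2 u - |v|^2 v$. I would expand schematically as $F = \schem(u^2 w) + \schem(u w^2) + \schem(w^3)$, so that every term carries at least one factor of $w$. Following the structure of Proposition \ref{besov/ell1}, I would then subdivide $\R$ into finitely many intervals $I_j = [t_j, t_{j+1})$, with $J = J(R, \eta)$, on which $\|u\|_{L^6_{t,x}(I_j)}, \|v\|_{L^6_{t,x}(I_j)} < \eta$, for a small parameter $\eta = \eta(R)$ to be chosen later.

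On each interval, my bootstrap quantity would be
\[
\|w\|_{S(I_j)} := \sup_{(p,q) \text{ admissible}} \sum_{N \in 2^\Z} \|\nabla w_N\|_{L_t^p L_x^q(I_j)}.
\]
Applying Strichartz to each piece $w_N$ with dual $(3, 3)$ admissibility, using Bernstein to absorb $\nabla$ into a power of $N$, and summing over $N$ would give
\[
\|w\|_{S(I_j)} \lesssim \|w(t_j)\|_{\dot{B}^1_{2,1}} + \sum_{N \in 2^\Z} N \|F_N\|_{L^{3/2}_{t,x}(I_j)}.
\]
A small but crucial observation is that $(6, 12/5)$ is admissible in $d = 4$, so Bernstein on each Littlewood--Paley piece yields $\|w\|_{L^6_{t,x}(I_j)} \lesssim \sum_N N \|w_N\|_{L_t^6 L_x^{12/5}(I_j)} \leq \|w\|_{S(I_j)}$.

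For the nonlinear term, I would apply Lemma \ref{besov/paraproduct} to each $F_N$, then H\"older with $\tfrac{1}{3} + \tfrac{1}{6} + \tfrac{1}{6} = \tfrac{2}{3}$, and telescope $\sum_{N \leq N_1} N \sim N_1$ to collapse the dyadic sums. If $w$ carries the high frequency $N_1$, this yields a bound of the form $(\eta + \|w\|_{S(I_j)})^2 \|w\|_{S(I_j)}$; if instead one of $u, v$ carries the high frequency, one obtains $C(R) \eta \|w\|_{S(I_j)}$, using Proposition \ref{besov/ell1} to control $\sum_{N_1} N_1 (\|u_{N_1}\|_{L^3_{t,x}(I_j)} + \|v_{N_1}\|_{L^3_{t,x}(I_j)})$ by $C(R)$. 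Choosing $\eta$ small enough in terms of $R$, and absorbing the cubic-in-$w$ piece via a standard continuity argument, the bootstrap closes to $\|w\|_{S(I_j)} \lesssim \|w(t_j)\|_{\dot{B}^1_{2,1}}$. Since $(\infty, 2)$ is admissible, the Besov norm at $t_{j+1}$ is controlled by $\|w\|_{S(I_j)}$, so iterating over $j = 1, \dots, J(R)$ produces the desired constant $C(R)$. The final-state variant follows with only notational adjustments, as in Propositions \ref{spacetime bounds} and \ref{besov/ell1}.

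The main obstacle will be the paraproduct case in which $u$ or $v$ (and not $w$) carries the high frequency: the bootstrap norm must simultaneously control $\|w\|_{L^6_{t,x}(I_j)}$ and yet be reproducible from the Strichartz input. Packaging every admissible-pair Strichartz norm with Besov summation into $\|w\|_{S(I_j)}$ is designed for exactly this --- the embedding $\|w\|_{L^6_{t,x}} \lesssim \|w\|_{S(I_j)}$ then follows from Bernstein on the $(6, 12/5)$ piece, while the dual Strichartz norm $L_t^{3/2} L_x^{3/2}$ on the nonlinearity lets one close.
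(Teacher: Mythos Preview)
Your approach is essentially the paper's: Strichartz on each Littlewood--Paley piece, the paraproduct decomposition of Lemma~\ref{besov/paraproduct}, interval subdivision using Proposition~\ref{besov/ell1}, and iteration over $j=1,\dots,J(R)$.

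One point deserves comment. The paper writes the nonlinearity as
\[
|u|^2u - |v|^2v = u^2\overline{w} + \overline{v}uw + |v|^2 w,
\]
which is \emph{linear} in $w$. Every paraproduct term then carries exactly one factor of $w$ and two factors of $u$ or $v$, so the bootstrap inequality is linear in $\|w\|_{S(I_j)}$ and closes by choosing $\eta$ small --- no continuity argument is needed. Your expansion $\schem(u^2w)+\schem(uw^2)+\schem(w^3)$ introduces higher powers of $w$, and your bookkeeping slips in one place: for $\schem(uw^2)$ with $u$ at the high frequency $N_1$, the two low-frequency factors are both $w$, so one obtains $C(R)\,\|w\|_{S(I_j)}^2$, not the $C(R)\,\eta\,\|w\|_{S(I_j)}$ you claim. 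With this quadratic term carrying a $C(R)$ coefficient (plus the cubic $\|w\|_{S(I_j)}^3$), the continuity argument only closes when $\|w(t_j)\|_{\dot{B}^1_{2,1}}$ is small relative to $1/C(R)$, which is not guaranteed after iterating over $J(R)$ intervals.

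The fix is immediate from your own setup: since $\|u\|_{L^6_{t,x}(I_j)},\|v\|_{L^6_{t,x}(I_j)}<\eta$, you also have $\|w\|_{L^6_{t,x}(I_j)}<2\eta$. Using this bound on all but one $w$-factor in each term makes the bootstrap linear and removes the need for continuity. Alternatively, adopt the paper's linear-in-$w$ expansion from the start.
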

\begin{proof}
    We focus on the initial-value problem first before remarking on the needed changes for the final-state problem. We first consider $p = q = 3$ before generalizing to all Schr\"odinger-admissible pairs $p,q$.
    
    We proceed via a bootstrap argument and introduce a small parameter $\eta > 0$ to be chosen later based on absolute constants and $R$. By Proposition \ref{besov/ell1}, we may decompose $\R$ into $J = J(R,  \eta)$ many intervals $I_j$ on which 
    \begin{equation}\label{besov/stability/smallness}
        \big\||u|\wedge|v|\big\|_{L_{t,x}^6(I_j)} \leq \|u\|_{L_{t,x}^6(I_j)} + \|v\|_{L_{t,x}^6(I_j)} < \eta.
    \end{equation}
    
    By the Duhamel formula and Strichartz estimates, for each spacetime slab $I_j \times \R^4$ and any $t_j \in I_j$, we may estimate
    \begin{equation}\label{besov/stability/1.5}\begin{split}
        \sum_N N \|P_N (u - v)\|_{L_{t,x}^{3}} & \lesssim \|u(t_j) - v(t_j)\|_{\dot{B}^1_{2,1}} + \sum_N N \Big\|P_N \big(|u|^2 u - |v|^2 v\big) \Big\|_{L_{t,x}^{3/2}} \\
        & = \|u(t_j) - v(t_j)\|_{\dot{B}^1_{2,1}} + \rn{2}.
    \end{split}\end{equation}
    
    We focus on the second term and decompose the nonlinearity as
    \begin{align*}
        |u|^2 u - |v|^2 v & = u^2\overline{(u - v)} + \overline{v} u (u-v) + |v|^2 (u-v).
    \end{align*}
    Then
    \begin{align*}
        \rn{2} & \leq \sum_N N \bigg\{\Big\| P_N \big[u^2\overline{(u-v)}\big] \Big\|_{L_{t,x}^{3/2}} + \Big\| P_N \big[\overline{v} u(u-v)\big] \Big\|_{L_{t,x}^{3/2}} + \Big\| P_N \big[|v|^2(u-v)\big] \Big\|_{L_{t,x}^{3/2}}\bigg\}.
    \end{align*}
    
    Applying the paraproduct decomposition, Lemma \eqref{besov/paraproduct}, we schematically have three terms, each term corresponding to the case where one of $(u-v),u,v$ lies at the high frequency $N_1$. Expanding the sum and bounding $u,v$ by $|u|\wedge |v|$ for convenience, we then find that
    \begin{align*}
        \rn{2} \lesssim &\; \sum_{N_1 \gtrsim N} N\bigg[\big\|u_{N_1} M(|u|\wedge |v|)M(u-v) \big\|_{L_{t,x}^{3/2}} + \big\|v_{N_1}M\big(|u|\wedge|v|\big)M(u-v) \big\|_{L_{t,x}^{3/2}}\bigg] \\
        & \; + \sum_{N_1 \gtrsim N} N \big\|(u-v)_{N_1} \big[M\big(|u|\wedge|v|\big)\big]^2 \big\|_{L_{t,x}^{3/2}}.
    \end{align*}
    
    Summing in $N$ first and then applying H\"older's inequality, we may estimate
    \begin{equation*}\begin{split}
        \rn{2} \lesssim & \; \sum_{N_1} N_1\bigg[\big\|u_{N_1} M\big(|u|\wedge|v|\big)M(u-v) \big\|_{L_{t,x}^{3/2}} +\big\|v_{N_1}M\big(|u|\wedge|v|\big)M(u-v) \big\|_{L_{t,x}^{3/2}}\bigg] \\
        & + \sum_{N_1} N_1 \big\|(u-v)_{N_1} \big[M\big(|u|\wedge|v|\big)\big]^2 \big\|_{L_{t,x}^{3/2}} \\
        \lesssim & \;\big\||u|\wedge|v|\big\|_{L_{t,x}^6}\|u-v\|_{L_t^3 L_x^{12}}\bigg(\sum_{N_1} N_1 \|u_{N_1}\|_{L_t^6 L_x^{12/5}} + \sum_{N_1} {N_1} \|v_{N_1}\|_{L_t^6 L_x^{12/5}}\bigg) \\
        & + \big\||u|\wedge|v|\big\|^2_{L_{t,x}^6}\sum_{N_1} N_1 \|(u-v)_{N_1}\|_{L_{t,x}^3}.
    \end{split}\end{equation*}
    Proposition \ref{besov/ell1} and \eqref{besov/stability/smallness} then imply that 
    \begin{equation}\begin{split}\label{besov/stability/2}
        \rn{2} \lesssim C(R) \eta\sum_{N_1} N_1 \|(u-v)_{N_1}\|_{L_{t,x}^3}.
    \end{split}\end{equation}
    
    Combining \eqref{besov/stability/1.5} and \eqref{besov/stability/2}, we then find that for each spacetime slab $I_j \times \R^4$,
    \begin{equation}\label{besov/stability/3}
        \sum_N N \|P_N (u - v)\|_{L_{t,x}^{3}} \lesssim \|u(t_j) - v(t_j)\|_{\dot{B}^1_{2,1}} + C(R) \eta\sum_{N_1} N_1 \|(u-v)_{N_1}\|_{L_{t,x}^3}.
    \end{equation}
    Taking $\eta$ sufficiently small based only on the constants in \eqref{besov/stability/3} and $R$, a bootstrap argument then implies that for all $j = 1,\dots, J(R)$,
    \begin{equation}
        \sum_N N \|(u - v)_N\|_{L_{t,x}^{3}(I_J)} \leq C(R)\|u(t_j) - v(t_j)\|_{\dot{B}^1_{2,1}}.
    \end{equation}
    For any Schr\"odinger-admissible pair, the Duhamel formula and Strichartz inequalities similarly imply that
    \begin{align*}
        \sum_N N \|(u - v)_N\|_{L_t^p L_x^q(I_j)} 
        & \leq C(R)\|u(t_j) - v(t_j)\|_{\dot{B}^1_{2,1}}.
    \end{align*}
    By the $p = \infty, q = 2$ estimate, an iterative argument with appropriately chosen $t_j$ then implies that
    \begin{align*}
        \sum_N N \|(u - v)_N\|_{L_t^p L_x^q(I_j)} & \leq C(R, j)\|u_0 - v_0\|_{\dot{B}^1_{2,1}}.
    \end{align*}
    Summing over $j = 1, \dots, J(R)$, this concludes the proof of the proposition for the initial-value problem.
    
    
    To adapt the preceding proof to the final-state problem, it is sufficient to change every instance of $u_0,v_0$ to $u_\pm,v_\pm$ respectively and one instance of $u(t_j),v(t_j)$ to $u_\pm, v_\pm$.
\end{proof}

\section{Proof of Theorem \ref{edge case theorem}}\label{edge}
    In this section, we prove Theorem \ref{edge case theorem}. Though we again use a bootstrap argument, we can no longer close the bootstrap by decomposing $\R$ into small intervals as was done in \eqref{integrable/smallness} and \eqref{bp/smallness}. This complication arises because our spacetime norms appear with $L_t^\infty$; see \eqref{edge/optimized}. Instead, we induct on the size of the initial data and close the bootstrap by considering small perturbations of the initial data.
\begin{proof}[Proof of Theorem \ref{edge case theorem}]
    For $f : \R \times \R^4 \to \C$, we define the norm
    \begin{equation*}
        \|f(t,x)\|_{X} = \sup_{t \neq 0} |t|^2 \|f(t)\|_{L_x^\infty}.
    \end{equation*}
    By the density of Schwartz functions in $\dot{B}^1_{2,1} \cap L^{p'}$, it suffices to consider Schwartz solutions of \eqref{NLS}.
    
    We proceed via induction on $\|u_0\|_{\dot{B}^1_{2,1}}$. As Theorem \ref{edge case theorem} holds trivially for $\|u_0\|_{\dot{B}^1_{2,1}} = 0$, it suffices to show the inductive step. 
    
    Suppose for the sake of induction that there exists some $R_0 \geq 0$ and $C(R_0)$ such that for all $\|u_0\|_{\dot{B}^1_{2,1}} \leq R_0$, the corresponding solution $u(t)$ to \eqref{NLS} with initial data $u_0$ satisfies
    \begin{equation}\label{edge/decay}
        \|u\|_{X} \leq C(R_0) \|u_0\|_{L^1}.
    \end{equation}
    By induction, it then suffices to show that \eqref{edge/decay} extends to all $\|u_0\|_{\dot{B}^1_{2,1}} \leq R_0 + 1$, perhaps with a new constant $C(R_0)$.

    We show this incrementally. Fix some $\epsilon > 0$ sufficiently small to be chosen later based only on $R_0$ and absolute constants. 
    Suppose for the sake of iteration that there exists $k$ with $R_0 + (k+1)\epsilon \leq R_0 + 1$ such that for all $\|v_0\|_{\dot{B}^1_{2,1}} \leq R_0 + k\epsilon$, the corresponding solution $v(t)$ to \eqref{NLS} with initial data $v_0$ satisfies
    \begin{equation}\label{edge/inductive assumption}
        \|v(t)\|_{X} \leq C(R_0,k,\epsilon)\|v_0\|_{L^1}.
    \end{equation}
    We then aim to show that for all $\|u_0\|_{\dot{B}^1_{2,1}} \leq R_0 + (k+1)\epsilon$, the corresponding solution $u(t)$ to \eqref{NLS} with initial data $u_0$ satisfies
    \begin{equation}\label{edge/inductive conclusion}
        \|u(t)\|_{X} \leq C(R_0,k,\epsilon)\|u_0\|_{L^1}.
    \end{equation}
    Provided that $\epsilon = \epsilon(R_0)$ is chosen based only on $R_0$, iterating over $k = 0,\dots, \epsilon^{-1} - 1$ will then extend \eqref{edge/decay} to all $\|u_0\|_{\dot{B}^1_{2,1}} \leq R_0 + 1$, potentially with a new constant $C(R_0)$. 
    
    We therefore focus on \eqref{edge/inductive conclusion} and proceed via a bootstrap argument. Suppose that $u_0$ satisfies the hypotheses of Theorem \ref{well-posedness} with $\|u_0\|_{\dot{B}^1_{2,1}} \leq R_0 + (k+1)\epsilon$. To make use of the iterative assumption \eqref{edge/inductive assumption}, we decompose $u_0$ as 
    \begin{equation*}
        u_0 = \big(\tfrac{R_0+ k\epsilon}{R_0+ (k+1)\epsilon}\big) u_0 + \big(\tfrac{\epsilon}{R_0+(k+1)\epsilon}\big) u_0 = v_0 + w_0.
    \end{equation*}
    Then $\|v_0\|_{\dot{B}^1_{2,1}} \leq R_0+ k\epsilon$; $\|w_0\|_{\dot{B}^1_{2,1}} \leq \epsilon$; and $\|v_0\|_{L^1},\|w_0\|_{L^1} \leq \|u_0\|_{L^1}$. 
    
    Let $v(t)$ be the solution to \eqref{NLS} with initial data $v_0$ and let $w(t) = u(t) - v(t)$. Note that Proposition \ref{besov/stability} then implies that
    \begin{equation}\label{edge/w smallness}
        \sum_N N\|w_N\|_{L_t^\infty L_x^2} \leq C(R_0) \epsilon,
    \end{equation}
    because $R_0+ (k+1)\epsilon \leq R_0 + 1$.
    
    As $\|v_0\|_{\dot{B}^1_{2,1}} \leq R_0+ k\epsilon$, the iterative assumption \eqref{edge/inductive assumption} implies that
    \begin{equation}\label{edge/bootstrap u}
        \|u\|_{X} \leq \|v\|_{X} + \|w\|_{X} \leq C(R_0,k,\epsilon)\|u_0\|_{L^1} + \|w\|_{X}.
    \end{equation}
    To prove \eqref{edge/inductive conclusion}, we then aim to show that $w$ satisfies the bootstrap statement
    \begin{equation}\label{edge/bootstrap w}
        \|w\|_{X} \leq C(R_0,k,\epsilon)\|u_0\|_{L^1} + C(R_0)\epsilon \|w\|_{X}.
    \end{equation}
    Taking $\epsilon \leq 1/2C(R_0)$ would then imply
    \begin{equation*}
        \|w\|_{X} \leq C(R_0, k, \epsilon) \|u_0\|_{L^1}.
    \end{equation*}
    Along with \eqref{edge/bootstrap u}, this would imply the inductive statement \eqref{edge/inductive conclusion} and conclude the proof of the theorem.

    We thus focus our attention on \eqref{edge/bootstrap w}. By definition, $w(t) = u(t) - v(t)$ solves the coupled equation
    \begin{align*}
        i w_t + \Delta w \pm (|u|^2u - |v|^2v) = 0,
    \end{align*}
    in the strong sense. Expanding $u = w + v$, we note that the nonlinearity schematically has 3 terms which we express as
    \begin{equation*}
        i w_t + \Delta w + \schem(w^3) + \schem(w^2 v) + \schem(w v^2) = 0.
    \end{equation*}
    Decomposing into early-\ and late-time intervals, the Duhamel formula for $w$ can then be written as
    \begin{equation}\label{edge/duhamel w}\begin{split}
        w(t) & = e^{it\Delta} w_0 \mp i\int_0^{t/2} e^{i(t-s)\Delta}\big[|u|^2 u - |v|^2 v\big](s) ds \\
        & \hspace{11pt} + \int_{t/2}^t e^{i(t-s)\Delta}\big[\schem(w^3) + \schem(w^2 v) + \schem(w v^2)\big](s) ds \\
        & = \rn{1} + \rn{2} + \rn{3}(w^3) + \rn{3}(w^2v) + \rn{3}(wv^2).
    \end{split}\end{equation}

    For term $\rn{1}$ in \eqref{edge/duhamel w}, the linear dispersive decay \eqref{intro/linear dispersive decay} immediately implies that
    \begin{equation}\label{edge/I}
        \|\rn{1}\|_{X} \lesssim  \|w_0\|_{L^1} \leq \|u_0\|_{L^1}
    \end{equation}
    which is acceptable for the bootstrap statement \eqref{edge/bootstrap w}. For term $\rn{2}$ in \eqref{edge/duhamel w}, because $u$ and $v$ both satisfy \eqref{NLS} and $\|u_0\|_{\dot{B}^1_{2,1}}, \|v_0\|_{\dot{B}^1_{2,1}} \leq R_0 + 1$, Lemma \ref{d4/early time} implies that
    \begin{equation}\begin{split}\label{edge/II}
        \|\rn{2}\|_{X} & = \bigg\| \int_0^{t/2} e^{i(t-s)\Delta}\big[|u|^2 u - |v|^2 v\big](s) ds\bigg\|_{X}
        \leq C(R_0) \|u_0\|_{L^1},
    \end{split}\end{equation}
    which is similarly acceptable for \eqref{edge/bootstrap w}. 
    
    It then remains to estimate the terms $\rn{3}(w^3)$, $\rn{3}(w^2v)$, and $\rn{3}(w v^2)$ in \eqref{edge/duhamel w}.
    We first consider general terms of the form $\rn{3}(*)$ before specializing.
    To align with the notation of Lemma \ref{besov/paraproduct}, we then consider terms of the form
    \begin{equation}\label{edge/general term}
        \sum_{N_1 \gtrsim N} \big\|\rn{3}\big(P_N[f_{N_1} g h]\big)\big\|_{X} = \sum_{N_1 \gtrsim N} \bigg\|\int_{t/2}^t e^{i(t-s)\Delta}P_N[f_{N_1}gh](s) ds\bigg\|_X,
    \end{equation}
    for arbitrary functions $f,g,h$. Before summing over $N_1 \gtrsim N$, we first consider individual terms $\rn{3}\big( P_N[f_{N_1} g h]\big)$.
    
    We decompose $g$ into Littlewood-Paley pieces and then introduce an integration cutoff $B > 0$ as
    \begin{align*}
        \big\| \rn{3}\big(P_N [f_{N_1}gh]\big)\big\|_{L_x^\infty} & \lesssim \sum_{N_2} \bigg\|\int_{t/2}^t e^{i(t-s)\Delta} P_N[f_{N_1} g_{N_2} h](s) ds \bigg\|_{L_x^\infty} \\
        & = \sum_{N_2}\int_{t/2}^{(t-B)\wedge t/2} \big\|e^{i(t-s)\Delta} P_N[f_{N_1} g_{N_2} h](s)\big\|_{L_x^\infty} ds \\ 
        & \hspace{11pt}
        + \sum_{N_2} \int_{(t-B)\wedge t/2}^t \big\|e^{i(t-s)\Delta} P_N[f_{N_1} g_{N_2}h](s)\big\|_{L_x^\infty} ds.
    \end{align*}
    In doing so, we have isolated the singularity at $s = t$ into the interval $((t-B)\wedge t/2, t]$.
    
    On the interval $(t/2, (t-B) \wedge t/2]$, away from the singularity, we apply the linear dispersive decay \eqref{intro/linear dispersive decay} directly. On the interval $((t-B)\wedge t/2, t]$, we apply Bernstein's inequality \eqref{intro/bernstein} and then the conservation of mass \eqref{intro/conservation of mass}. Because $P_N$ is bounded on $L^p$, this implies that
    \begin{align*}
        \big\| \rn{3}\big(P_N [f_{N_1}gh]\big)\big\|_{L_x^\infty}
        & \lesssim \sum_{N_2} \int_{t/2}^{(t-B)\wedge t/2}  |t-s|^{-2}\|[f_{N_1} g_{N_2} h](s)\|_{L_x^1} ds \\
        & \hspace{11pt} + \sum_{N_2} \int_{(t-B)\wedge t/2}^t N^2\big\|[f_{N_1} g_{N_2} h](s)\big\|_{L_x^2} ds.
    \end{align*}
    
    For each term, we place $h$ into the bootstrap norm and note that $|s| \sim |t|$ for $s \in [t/2, t)$. Doing so, we find that
    \begin{align*}
        \big\| \rn{3}\big(P_N [f_{N_1}gh]\big)\big\|_{L_x^\infty}
        & \lesssim |t|^{-2} \|h\|_X\sum_{N_2} \int_{t/2}^{(t-B)\wedge t/2}  |t-s|^{-2}\|[f_{N_1} g_{N_2}](s)\|_{L_x^1} ds \\
        & \hspace{11pt} + |t|^{-2} \|h\|_X \sum_{N_2} \int_{(t-B)\wedge t/2}^t N^2\big\|[f_{N_1} g_{N_2}](s)\big\|_{L_x^2} ds.
    \end{align*}
    H\"older's inequality and Bernstein's inequality \eqref{intro/bernstein} then imply that
    \begin{align*}
        \big\| \rn{3}(P_N & [f_{N_1}gh])\big\|_{L_x^\infty}\\
        & \lesssim |t|^{-2} \|h\|_{X} \sum_{N_2} \bigg[B^{-1} \|f_{N_1}\|_{L_t^\infty L_x^2}\|g_{N_2}\|_{L_t^\infty L_x^2} + B N^2\|f_{N_1}\|_{L_t^\infty L_x^2} \|g_{N_2}\|_{L_{t,x}^\infty}\bigg] \\
        & \lesssim |t|^{-2}\|f_{N_1}\|_{L_t^\infty L_x^2}\|h\|_{X}\sum_{N_2} N_2\|g_{N_2}\|_{L_t^\infty L_x^2}\big[B^{-1}N_2^{-1}  + B N^2N_2\big].
    \end{align*}
    
    Optimizing $B \sim N^{-1} N_2^{-1}$, we may then estimate
    \begin{equation*}
        \big\| \rn{3}\big(P_N [f_{N_1}gh]\big)\big\|_{L_x^\infty} \lesssim |t|^{-2}N\|f_{N_1}\|_{L_s^\infty L_x^2}\bigg(\sum_{N_2}N_2\|g_{N_2}\|_{L_s^\infty L_x^2}\bigg)\|h\|_{X}.
    \end{equation*}
    Returning our attention to \eqref{edge/general term}, we sum over $N_1 \gtrsim N$ to find
    \begin{equation}\begin{split}\label{edge/optimized}
        \sum_{N_1 \gtrsim N} \big\| \rn{3}&\big(P_N [f_{N_1}gh]\big)\big\|_{X} 
         \lesssim \bigg(\sum_NN\|f_{N}\|_{L_t^\infty L_x^2}\bigg)\bigg(\sum_N N\|g_{N}\|_{L_t^\infty L_x^2}\bigg)\|h\|_{X},
    \end{split}\end{equation}
    which will be sufficient for our analysis of $\rn{3}(w^3)$,  $\rn{3}(w^2v)$, and $\rn{3}(wv^2)$.
    

    We turn our attention to $\rn{3}(w^2v)$. Applying the paraproduct decomposition, Lemma \ref{besov/paraproduct}, and then \eqref{edge/optimized}, we find that
    \begin{align*}
        \sum_{N} \big\|P_N \rn{3}(w^2 v)\big\|_{X} & \lesssim \sum_{N_1 \gtrsim N} \big\|\rn{3}\big(P_N[v_{N_1} w^2]\big)\big\|_{X} + \sum_{N_1 \gtrsim N} \big\|\rn{3}\big(P_N[w_{N_1} wv]\big)\big\|_{X} \\
        & \lesssim \|w\|_{X} \bigg(\sum_N N\|w_{N}\|_{L_t^\infty L_x^2}\bigg)\bigg(\sum_N N\|v_{N}\|_{L_t^\infty L_x^2}\bigg).
    \end{align*}
    With Proposition \ref{besov/ell1} and \eqref{edge/w smallness}, this implies that
    \begin{align}\label{edge/w 1}
        \sum_{N} \big\|P_N \rn{3}(w^2 v)\big\|_{X} & \lesssim C(R_0) \epsilon \|w\|_{X}.
    \end{align}
    
    Repeating this argument for $\rn{3}(w^3)$, we find similarly that
    \begin{equation}\begin{split}\label{edge/w 2}
        \sum_{N} \big\|P_N \rn{3}(w^3)\big\|_{X} & \lesssim \bigg(\sum_N N\|w_N\|_{L_t^\infty L_x^2} \bigg)^2 \|w\|_{X} \\
        & \lesssim C(R_0) \epsilon \|w\|_{X}.
    \end{split}\end{equation}

    We finally turn our attention to $\rn{3}(w v^2)$. Applying the paraproduct decomposition, Lemma \ref{besov/paraproduct}, and then \eqref{edge/optimized}, we find that
    \begin{align*}
        \sum_{N} \big\|P_N \rn{3}(w v^2)\big\|_{X} & \lesssim \sum_{N_1 \gtrsim N} \big\|\rn{3}\big(P_N[w_{N_1} v^2]\big) \big\|_{L_x^\infty} + \sum_{N_1 \gtrsim N} \big\|\rn{3}\big(P_N[v_{N_1} wv]\big) \big\|_{L_x^\infty} \\
        & \lesssim \bigg(\sum_N N\|w_N\|_{L_t^\infty L_x^2}\bigg)\bigg(\sum_N N\|v_N\|_{L_t^\infty L_x^2}\bigg)\|v\|_X.
    \end{align*}
    Because $\|v_0\|_{\dot{B}^1_{2,1}},\|w_0\|_{\dot{B}^1_{2,1}} \leq R_0 + k \epsilon \leq R_0 + 1$, we may apply \eqref{edge/w smallness}, the iterative assumption \eqref{edge/inductive assumption}, and Proposition \ref{besov/ell1} to estimate
    \begin{align}\label{edge/w 3}
        \sum_{N} \big\|P_N \rn{3}(w v^2)\big\|_{X} & \lesssim C(R_0,k,\epsilon) \|u_0\|_{L^1}.
    \end{align}

    Combining the expansion \eqref{edge/duhamel w} with the estimates \eqref{edge/I}, \eqref{edge/II}, \eqref{edge/w 1}, \eqref{edge/w 2}, and \eqref{edge/w 3} then implies the bootstrap statement \eqref{edge/bootstrap w}. By earlier considerations, this concludes the proof of Theorem \ref{edge case theorem}.
\end{proof}

We recall that the Besov spaces interpolate between Sobolev spaces of different regularity. In particular, for all $\alpha < 1 < \beta$,
\begin{equation*}
    \dot{B}^1_{2,1} = \big(\dot{H}^{\alpha}, \dot{H}^{\beta} \big)_{\frac{1 - \beta}{\alpha - \beta},1}.
\end{equation*}
Therefore, we gain an immediate corollary written in the standard Sobolev spaces:
\begin{corollary}\label{edge/corollary}
    Fix some $\alpha < 1 < \beta$. Given $u_0 \in L^1 \cap \dot{H}^{\alpha} \cap \dot{H}^{\beta}(\R^4)$ satisfying the hypotheses of Theorem \ref{well-posedness}, let $u(t)$ denote the unique global solution to \eqref{NLS} with initial data $u_0$. Then
    \begin{equation*}
        \|u(t)\|_{L_x^\infty} \leq C(\|u_0\|_{\dot{H}^{\alpha}},\|u_0\|_{\dot{H}^{\beta}})|t|^{-2} \|u_0\|_{L^1}.
    \end{equation*}
\end{corollary}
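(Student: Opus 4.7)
The plan is to reduce Corollary \ref{edge/corollary} to Theorem \ref{edge case theorem} by establishing the continuous embedding $\dot{H}^\alpha \cap \dot{H}^\beta \hookrightarrow \dot{B}^1_{2,1}$ whenever $\alpha < 1 < \beta$. This embedding is exactly the content of the displayed interpolation identity preceding the corollary, and once available, the result follows by plugging into Theorem \ref{edge case theorem}.

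The concrete route is through a standard Littlewood--Paley decomposition, which I prefer over black-boxing real interpolation. For $f \in \dot H^\alpha \cap \dot H^\beta$ and $N \in 2^\Z$, Bernstein-style reasoning gives the two bounds
\begin{equation*}
\|f_N\|_{L^2} \lesssim N^{-\alpha}\bigl\||\nabla|^\alpha f_N\bigr\|_{L^2} \lesssim N^{-\alpha}\|f\|_{\dot H^\alpha} \qtq{and} \|f_N\|_{L^2} \lesssim N^{-\beta}\|f\|_{\dot H^\beta}.
\end{equation*}
Given a dyadic cutoff $M \in 2^\Z$, I would use the first bound for $N \leq M$ and the second for $N > M$:
\begin{equation*}
\|f\|_{\dot B^1_{2,1}} = \sum_{N \in 2^\Z} N\|f_N\|_{L^2} \lesssim \|f\|_{\dot H^\alpha}\sum_{N \leq M} N^{1-\alpha} + \|f\|_{\dot H^\beta}\sum_{N > M} N^{1-\beta}.
\end{equation*}
The hypotheses $\alpha < 1$ and $\beta > 1$ are exactly what is needed to make both geometric series converge, yielding $\|f\|_{\dot B^1_{2,1}} \lesssim M^{1-\alpha}\|f\|_{\dot H^\alpha} + M^{1-\beta}\|f\|_{\dot H^\beta}$, and optimizing in $M$ gives a finite bound depending only on $\|f\|_{\dot H^\alpha}$ and $\|f\|_{\dot H^\beta}$.

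With this embedding in hand, the initial data $u_0$ lies in $L^1 \cap \dot B^1_{2,1}(\R^4)$ and satisfies the hypotheses of Theorem \ref{well-posedness} by assumption. Theorem \ref{edge case theorem} then yields
\begin{equation*}
\|u(t)\|_{L^\infty_x} \leq C\bigl(\|u_0\|_{\dot B^1_{2,1}}\bigr)|t|^{-2}\|u_0\|_{L^1} \leq C\bigl(\|u_0\|_{\dot H^\alpha},\|u_0\|_{\dot H^\beta}\bigr)|t|^{-2}\|u_0\|_{L^1},
\end{equation*}
which is the corollary. There is essentially no obstacle here: the only subtlety is that both exponent conditions $\alpha < 1$ and $\beta > 1$ must be strict, and this is already built into the hypothesis. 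The proof therefore amounts to a one-line invocation of Theorem \ref{edge case theorem} once the Littlewood--Paley splitting argument above is recorded.
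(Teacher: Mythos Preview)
Your proposal is correct and follows essentially the same approach as the paper: both reduce the corollary to Theorem \ref{edge case theorem} via the embedding $\dot{H}^\alpha \cap \dot{H}^\beta \hookrightarrow \dot{B}^1_{2,1}$ for $\alpha < 1 < \beta$. The only difference is cosmetic---the paper simply cites the real interpolation identity $\dot{B}^1_{2,1} = (\dot{H}^\alpha,\dot{H}^\beta)_{\frac{1-\beta}{\alpha-\beta},1}$, whereas you give the explicit Littlewood--Paley splitting argument that underlies it.
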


\section{Final-state problem} \label{scattering}
    In this section, we prove dispersive decay for the final-state problem, Theorems \ref{scattering theorem} and \ref{scattering edge case theorem}. We restrict attention to the scattering state $u_+$ as the case $u_-$ will follow from time-reversal symmetry. We then recall the Duhamel formula for the final-state problem with $u_+$ given:
\begin{equation}\label{scattering/duhamel}
    u(t) = e^{it\Delta} u_+ \pm i \int_t^\infty e^{i(t-s)\Delta} \big[|u|^{\frac{4}{d-2}}u\big](s) ds.
\end{equation}

The proofs of Theorems \ref{scattering theorem} and \ref{scattering edge case theorem} follow nearly identical arguments to the proofs of Theorems \ref{theorem} and \ref{edge case theorem} for the initial-value problem \eqref{NLS}. The only significant change arises from the decomposition $[0,t) = [0,t/2) \cup [t/2, t)$; see \eqref{integrable/early-late} for its use in the initial-value problem. This decomposition allowed us to estimate $|t-s| \gtrsim |t|$ for $s \in [0,t/2)$ and $|s| \gtrsim |t|$ for $s \in [t/2,t)$, but the exact form of the decomposition played no other part in the proof. To prove Theorems \ref{scattering theorem} and \ref{scattering edge case theorem}, it then suffices to find a similar decomposition of $[t,\infty)$ which allows for the same estimates.

For the case of $t < 0$, we now make the decomposition $[t,\infty) = [t,t/2) \cup [t/2, \infty)$. Then $|t-s| \gtrsim |t|$ for $s \in [t/2,\infty)$ and $|s| \gtrsim |t|$ for $s \in [t,t/2)$. With this decomposition, the proofs presented for Theorems \ref{theorem} and \ref{edge case theorem} can be adapted with only minor changes in notation.

It then remains to consider $t > 0$. In this case, we must modify the bootstrap argument in addition to the notation and decomposition. As these modifications will be consistent across all cases, we only present the proof in the case of $p = 3, d = 3$. This doubly serves to provide an example of explicit numbers for the proof of Theorem \ref{theorem} in the integrable case, see Section \ref{integrable}.
\begin{proof}[Proof of Theorems \ref{scattering theorem} and \ref{scattering edge case theorem}]
    As noted, we consider only the case of $t > 0$ for $u_+$ and we fix $p = 3$, $d = 3$ for concreteness. By the density of Schwartz functions in $\dot{H}^1 \cap L^{3/2}$, it suffices to consider Schwartz solutions of \eqref{NLS}.
    
    For $0 \leq T < \infty$, we define the norm
    \begin{equation*}
        \|u\|_{X(T)} = \sup_{t \geq T} |t|^{1/2}\|u(t)\|_{L_x^3}.
    \end{equation*}
    It then suffices to show
    \begin{equation}\label{scattering/conclusion}
        \|u\|_{X(0)} \leq C(\|u_+\|_{\dot{H}^1})\|u_+\|_{L^{p'}},
    \end{equation}
    for which we proceed with a bootstrap argument.

    Let $\eta > 0$ denote a small parameter to be chosen later, depending only on universal constants. Proposition \ref{spacetime bounds} then implies that we may decompose $[0,\infty)$ into $J = J(\|u_+\|_{\dot{H}^1},\eta)$ many intervals $I_j = [T_j,T_{j+1})$ on which
    \begin{equation}\label{scattering/smallness}
        \|u\|_{L_s^{8,4} L_x^{12}(I_j )} < \eta.
    \end{equation}
    
    We aim to show that for all $j = 1,\dots, J$,
    \begin{equation}\label{scattering/bootstrap}
        \|u\|_{X(T_j)} \lesssim \|u_0\|_{L^{3/2}} + C(\|u_+\|_{\dot{H^1}})\|u\|_{X(T_{j+1})} + \eta^4 \|u\|_{X(T_j)}.
    \end{equation}
    Choosing $\eta = \eta(R_0) > 0$ sufficiently small based on the constants in \eqref{scattering/bootstrap}, we could then iterate over $j = 1,\dots,J$ to yield \eqref{scattering/conclusion} and conclude the proof of Theorems \ref{scattering theorem} and \ref{scattering edge case theorem}.

    We therefore focus on \eqref{scattering/bootstrap}. Fix $t \in [T_j,\infty)$ and recall the Duhamel formula \eqref{scattering/duhamel}. By the linear dispersive decay \eqref{intro/linear dispersive decay}, the contribution of the linear term to $\|u(t)\|_{X(T_j)}$ is immediately seen to be acceptable:
    \begin{equation}\label{scattering/linear}
        \big\|e^{it\Delta} u_+\big\|_{X(T_j)} \lesssim \|u_+\|_{L_x^{3/2}}.
    \end{equation}
    We thus focus on the nonlinear correction. 
    
    By the linear dispersive decay \eqref{intro/linear dispersive decay} and H\"older's inequality, we may estimate
    \begin{align*}
        \bigg\|\int_t^\infty e^{i(t-s)\Delta}\big[|u|^4u\big](s)ds\bigg\|_{L_x^3}
        \lesssim \int_t^\infty |t-s|^{-1/2}\|u(s)\|_{L^3} \big\|u(s)\big\|^4_{L_x^{12}}ds.
    \end{align*}
    By definition, $|s|^{1/2}\|u(s)\|_{L^3} \leq \|u\|_{X(s)}$. Then
    \begin{align*}
        \bigg\| \int_t^\infty e^{i(t-s)\Delta} & \big[|u|^4u\big](s)ds\bigg\|_{L_x^p} \lesssim \int_t^\infty |t-s|^{-1/2}|s|^{-1/2}\|u\|_{X(s)}\|u(s)\|^4_{L_x^{12}}ds.
    \end{align*}
    
    We decompose $[t,\infty)$ into $[t,2t) $ and $ [2t,\infty) $. For $s \in [t,2t)$ we note that $|s| \sim |t|$, and for $s \in [2t,\infty)$, we note that $|t-s| \gtrsim |t|$. Then
    \begin{align*}
        \bigg\|\int_t^\infty e^{i(t-s)\Delta} \big[|u|^4 u\big](s) ds\bigg\|_{L_x^p} & \lesssim |t|^{-1/2}\int_t^{2t}|t-s|^{-1/2}\|u\|_{X(s)}\|u(s)\|^4_{L_x^{12}}ds\\
        &\hspace{11pt} + |t|^{-1/2}\int_{2t}^\infty |s|^{-1/2}\|u\|_{X(s)}\|u(s)\|^4_{L_x^{12}}ds.
    \end{align*}
    As $|t-s|^{-1/2}, |s|^{-1/2} \in L_s^{2,\infty}$, see \eqref{Lorentz/identity}, this decomposition and H\"older's inequality then imply
    \begin{align*}
        \bigg\|& \int_t^\infty e^{i(t-s)\Delta} \big[|u|^4u\big](s)ds\bigg\|_{L_x^p} \lesssim |t|^{-1/2}\Big\|\|u\|_{X(s)}\|u(s)\|^4_{L_x^{12}}\Big\|_{L_s^{2,1}\left( [t,\infty) \right)}.
    \end{align*}

    For $t \in [T_j,\infty)$, we decompose $[t,\infty)$ into $[t,\infty) \cap [T_{j+1},\infty)$ and $[t,\infty)\cap I_j$. Doing so, \eqref{scattering/smallness} and Proposition \ref{spacetime bounds} then imply
    \begin{align*}
        \Big\|\|u\|_{X(s)}&\|u(s)\|^4_{L_x^{12}}\Big\|_{L_s^{2,1}\left([t,\infty)\right)}\\
        & \leq \|u\|_{X(T_{j+1})}\|u\|^4_{L_s^{8,4}L_x^{12}([T_{j+1},\infty))}+ \|u\|_{X(T_j)}\|u\|^4_{L_s^{8,4}L_x^{12}(I_j )} \\
        & \leq C\big(\|u_+\|_{\dot{H}^1}\big)\|u\|_{X(T_{j+1})} + \eta^4\|u\|_{X(T_j)}.
    \end{align*}
    Combining these estimates, we find that
    \begin{align*}
        \bigg\|\int_t^\infty e^{i(t-s)\Delta}\big[|u|^4u\big](s)ds\bigg\|_{X(T_j)}
        \leq C(\|u_+\|_{\dot{H}^1})\|u\|_{X(T_{j+1})} + \eta^4\|u\|_{X(T_j)}.
    \end{align*}
    Along with \eqref{scattering/linear}, this yields the bootstrap statement \eqref{scattering/bootstrap} and concludes the proof of Theorem \ref{scattering theorem} and Theorem \ref{scattering edge case theorem}.
    
\end{proof}


\bibliographystyle{abbrv}
\bibliography{references}
\end{document}